\documentclass[12pt]{amsart}
\usepackage{amscd,amsmath,amsfonts,amssymb,latexsym}
\usepackage[margin=3.3cm]{geometry}
\usepackage{hyperref,graphicx}
\usepackage{xcolor}
\hypersetup{
    colorlinks=true,
    citecolor=blue,
    linkcolor=blue,
    urlcolor=blue,
}
\usepackage{tikz}
\usepackage[all]{xy}
\usepackage{slashed}
\usepackage{soul}
\usepackage{comment}
\usepackage{extarrows}
\usepackage{mathtools}
\usepackage{url}
\usepackage{lipsum}


\numberwithin{equation}{section}
\theoremstyle{plain}
\newtheorem{lemma}{Lemma}[section]
\newtheorem{proposition}[lemma]{Proposition}

\newtheorem{theorem}[lemma]{Theorem}

\theoremstyle{definition}
\newtheorem{definition}[lemma]{Definition}
\newtheorem{remark}[lemma]{Remark}

\let\C\relax
\newcommand{\C}{{\mathbb C}}
\newcommand{\R}{{\mathbb R}}
\newcommand{\Z}{{\mathbb Z}}

\newcommand{\id}{{\rm id}}
\newcommand{\Om}{{\Omega}}
\newcommand{\om}{{\omega}}

\newcommand{\la}{\langle}
\newcommand{\ra}{\rangle}
\newcommand{\p}{{\partial}}

\newcommand{\vol}{\mbox{\rm vol}}
\newcommand{\rmspan}{\mbox{\rm span}}

\newcommand{\Sp}{{\text{\rm Spin}(7)}}

\renewcommand{\Im}{{ \rm Im \,}}

\newcommand{\Aa}{{\mathcal A}}

\newcommand{\Ff}{{\mathcal F}}

\renewcommand{\i}{{\sqrt{-1}}}
\renewcommand{\l}{{\ell}}

\newcommand{\n}{{\nabla}}
\renewcommand{\k}{{\kappa}}

\begin{document}


\title{The real Fourier--Mukai transform of Cayley cycles}

\author{Kotaro Kawai}
\address{Department of Mathematics, Faculty of Science, Gakushuin University, 1-5-1 Mejiro, Toshima-ku, Tokyo 171-8588, Japan}
\email{kkawai@math.gakushuin.ac.jp}

\author{Hikaru Yamamoto}
\address{Department of Mathematics, Faculty of Pure and Applied Science, University of Tsukuba, 1-1-1 Tennodai, Tsukuba, Ibaraki 305-8577, Japan}
\email{hyamamoto@math.tsukuba.ac.jp}

\thanks{The first named author is supported by 
JSPS KAKENHI Grant Number JP17K14181, 
and the second named author is supported by JSPS KAKENHI Grant Number 
JP18K13415 and Osaka City University Advanced Mathematical Institute (MEXT Joint Usage/Research Center on Mathematics and Theoretical Physics)}
\begin{abstract}
The real Fourier--Mukai transform sends 
a section of a torus fibration to a connection over the total space of the dual torus fibration. 
By this method, 
Leung, Yau and Zaslow introduced 
deformed Hermitian Yang--Mills (dHYM) connections for K\"ahler manifolds 
and Lee and Leung introduced 
deformed Donaldson--Thomas (dDT) connections for $G_2$- and ${\rm Spin}(7)$-manifolds. 

In this paper, 
we suggest an alternative definition of a dDT connection for a manifold 
with a ${\rm Spin}(7)$-structure which seems to be more appropriate 
by carefully computing the real Fourier--Mukai transform again. 
We also post some evidences showing that the definition we suggest 
is compatible with 
dDT connections for a $G_2$-manifold  
and 
dHYM connections of a Calabi-Yau 4-manifold. 

Another importance of this paper is that it motivates our study in our other papers. 
That is, based on the computations in this paper, 
we develop the theories of deformations of dDT connections for a manifold 
with a ${\rm Spin}(7)$-structure and the  ``mirror" of the volume functional, 
which is called the Dirac-Born-Infeld (DBI) action in physics. 
\end{abstract}
\keywords{mirror symmetry, 
deformed Donaldson--Thomas, 
special holonomy, calibrated submanifold}
\subjclass[2010]{
Primary: 53C07, Secondary: 53D37, 53C25, 53C38}
\maketitle

\tableofcontents
\section{Introduction}
In the context of mirror symmetry, in particular Konstevich's homological mirror symmetry conjecture, one of vital needs is to provide a geometric functor from one side to its mirror side. Originally, the conjecture was stated for Calabi--Yau manifolds, however, the applicable scope has been extended to the other special holonomy cases, $G_2$ and ${\rm Spin}(7)$. Firstly, for the Calabi--Yau case, Leung, Yau and Zaslow \cite{LYZ} in 2000 found a natural and promising candidate for such a functor, 
which is called the real Fourier--Mukai transform nowadays. 

The real Fourier--Mukai transform sends 
a section of a torus fibration to a connection over the total space of the dual torus fibration. 
In their paper \cite{LYZ}, Leung, Yau and Zaslow proved that the real Fourier--Mukai transform of a special Lagrangian cycle 
is a \emph{deformed Hermitian Yang--Mills (dHYM) connection}. 
This can be considered as a correspondence between supersymmetric A-cycles and B-cycles in the sense of mirror symmetry.

Even in the case where the total space is not a Calabi--Yau manifold, 
the real Fourier--Mukai transform can also work. 
Actually, Lee and Leung \cite{LL} computed the real Fourier--Mukai transform of an associative 
and a coassociative cycle in a $G_{2}$-manifold 
and of a Cayley cycle in a ${\rm Spin}(7)$-manifold. 
In \cite{LL}, they picked some properties which 
the real Fourier--Mukai transform satisfies and called 
such a connection a \emph{deformed Donaldson--Thomas (dDT) connection}. 
In this paper, we suggest an alternative definition of a dDT connection 
for a manifold with a $\Sp$-structure which seems to be more appropriate 
by carefully computing the real Fourier--Mukai transform again.

As the real Fourier--Mukai transform of a submanifold written as a graph of a section of a trivial $T^{4}$-fibration over a flat 4-dimensional base $B$, we obtain the following.

\begin{theorem}[Theorem \ref{thm:FM Spin7}] \label{thm:FM Spin7-1}
Let $B\subset\mathbb{R}^4$ be an open set and 
$f:B\to T^{4}$ be a smooth function. 
Denote by $S=\{\,(x,f(x))\mid x\in B\,\}$ 
the graph of $f$, a 4-dimensional submanifold in $X=B\times T^{4}$. 
By the real Fourier--Mukai transform, 
$S$ corresponds to a Hermitian connection $\n^S$ 
of a trivial complex line bundle over $B\times (T^{4})^* \cong X$. 
Denote by $F_\n^S \in \i \Om^2 (X)$ the curvature 2-form of $\n^S$. 

Then, the graph $S$ is a Cayley submanifold with an appropriate orientation 
if and only if 
\[
\pi^2_7 \left( F_\nabla^S + \frac{1}{6} * (F_\nabla^S)^3 \right) =0
\quad\mbox{and}\quad  \pi^4_7  \left((F_\nabla^S)^{2} \right)=0.
\]
Here, $\pi^k_\l : \Om^k \rightarrow \Om^k_\l$ is the projection and 
$\Om^k_\l \subset \Om^k$ is the subspace of the space of $k$-forms 
corresponding to the $\l$-dimensional irreducible representation of $\Sp$ as in Subsection \ref{sec:Spin7 geometry}. 
\end{theorem}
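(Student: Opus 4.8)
The plan is to reduce both the Cayley condition on $S$ and the two equations on $F:=F_\n^S$ to explicit algebraic conditions on the Jacobian matrix $A=(\p f^i/\p x^j)_{1\le i,j\le 4}$, and then to check that the two resulting systems of polynomial equations have the same solution set. First I would fix coordinates $x^1,\dots,x^4$ on $B$ and dual fibre coordinates $y_1,\dots,y_4$ on $(T^4)^*$, so that $X^*=B\times(T^4)^*$ carries the flat $\Sp$-structure of Subsection~\ref{sec:Spin7 geometry} with Cayley form $\Phi$ written explicitly in $dx^i,dy_j$, and the complex structure pairs $x^i$ with $y_i$. From the description of the real Fourier--Mukai transform, the curvature is the linear expression $F=\i\sum_{i,j}A_{ij}\,dx^j\wedge dy_i$ (up to the sign and transpose conventions fixed earlier), so every quantity on the connection side becomes an explicit polynomial in the entries $A_{ij}$.

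On the submanifold side, the tangent space $T_pS$ at $(x,f(x))$ is spanned by $\p_{x^j}+\sum_iA_{ij}\,\p_{y_i}$. I would compute the induced volume $\vol_S=\sqrt{\det(I_4+A^\top A)}\;dx^1\wedge\cdots\wedge dx^4$ and the restriction $\Phi|_S$, the latter being a polynomial whose top part is $\mathrm{Re}\,\det(I_4+\i A)$ together with the square of the antisymmetric part of $A$ coming from $\tfrac12\om^2$. The cleanest characterisation of the Cayley locus is the vanishing of the $\Sp$-equivariant $\Om^2_7$-valued $4$-form of Harvey--Lawson restricted to $T_pS$; I would make this obstruction explicit as a system $P_\alpha(A)=0$, and note that, for the appropriate orientation, it is equivalent to the calibration equality $\Phi|_S=\vol_S$. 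Producing the most convenient generating set for $\{P_\alpha=0\}$ is the first technical step.

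On the connection side I would compute the two projections directly using the explicit bases for $\Om^2_7$ and $\Om^4_7$ and the projection formulas of the $\Sp$-geometry subsection. Since $*F^3$ is cubic in $A$, the first equation $\pi^2_7\big(F+\tfrac16*F^3\big)=0$ is a system of seven equations with a linear and a cubic part in $A$, while $\pi^4_7(F^2)=0$ is a system of seven quadratic equations. The precise combination $F+\tfrac16*F^3$, and in particular the coefficient $\tfrac16$, should fall out of this computation; this is exactly the point at which the definition proposed here is meant to differ from, and correct, the one in \cite{LL}.

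The last and hardest step is to match the two systems. I expect the quadratic system $\pi^4_7(F^2)=0$ to reproduce the ``complex/holomorphic-type'' part of $\{P_\alpha=0\}$ and the first equation $\pi^2_7\big(F+\tfrac16*F^3\big)=0$ to capture the remaining ``special-Lagrangian-type'' part, so that together they are equivalent to the full Cayley condition. The main obstacle is the representation-theoretic bookkeeping needed to show that the cubic term $\tfrac16*F^3$ combines correctly with the linear term and with the quadratic equations; here the cross-product identities for $\Sp$ and the Hodge-star relations on the subspaces $\Om^k_7$ will do the real work. Throughout, one must keep careful track of the orientation (the ``appropriate orientation'' in the statement) and of the factors of $\i$ arising because $F$ is imaginary-valued, since these are what fix the signs in the final equivalence.
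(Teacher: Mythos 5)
Your overall frame (flat $\Sp$-structure, explicit curvature linear in the Jacobian $A$, the $\tau$-obstruction of Harvey--Lawson, projection formulas for $\Om^2_7$ and $\Om^4_7$) is the right setting, but the step you defer --- ``check that the two resulting systems of polynomial equations have the same solution set'' --- is the entire content of the theorem, and as a method it will not go through. Two nonlinear systems of seven equations in the sixteen entries of $A$ cannot be compared at the level of zero loci; what is actually true, and what the paper proves, is a pointwise \emph{identity} between the systems. Writing $F=-\i F^S_\n = dx^0\wedge F_1 + F_2$ (separating the $\p/\p x^0$-derivatives $F_1$ from the spatial part $F_2$), the paper evaluates the $\Lambda^4_7$-valued obstruction $\tau$ on the graph tangent frame, splits its seven components into the four $dy^a$-directions ($J_1$) and the three $dx^k$-directions ($J_2$), and then, via the explicit $\Sp$-equivariant isometries $\lambda^2,\lambda^4$ of Lemma \ref{lem:lambdas}, obtains
\[
*_7 J_1 = 2\,(\lambda^2)^{-1}\Bigl(\pi^2_7\Bigl(-F+\tfrac{1}{6}*_8 F^3\Bigr)\Bigr),
\qquad
*_7 J_2 = -\tfrac{\sqrt{8}}{2}\,(\lambda^4)^{-1}\,\pi^4_7(F^2),
\]
so the equivalence is immediate, each component of each system being literally a component of the other under invertible maps. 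Without isolating this identity (the content of Lemma \ref{lem:2747} together with the $J_1,J_2$ computation, which in turn recycles the $I_1,I_2,I_3$ decomposition from the $G_2$ case, Proposition \ref{prop:FM G2}), your plan has no mechanism for the ``hardest step.''

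Moreover, your anticipated matching is misaligned: the split is \emph{not} ``holomorphic-type $\leftrightarrow \pi^4_7(F^2)=0$'' versus ``special-Lagrangian-type $\leftrightarrow \pi^2_7(F+\tfrac16 *F^3)=0$,'' and the auxiliary complex structure pairing $x^i$ with $y_i$ plays no role in the proof. The correct dichotomy is base/fibre: $\pi^4_7(F^2)=0$ corresponds to $J_2=\tfrac12\varphi\wedge *_7F_2^2 - F_1\wedge F_2\wedge\varphi=0$, which mixes the ``time'' derivatives $F_1$ with the spatial data $F_2$ and is not a $(0,2)$-type condition, while $\pi^2_7(F+\tfrac16*F^3)=0$ carries both the cubic term $F_2^3$ and the quadratically corrected $*_7F_1$ term. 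Relatedly, be aware (cf.\ Remark \ref{rem:FM Spin7}) that unlike the $G_2$ case the first equation does \emph{not} imply the second here, so any heuristic that lets the two halves of the Cayley condition interact must be treated with care; the representation-theoretic identity above is precisely what replaces such heuristics.
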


We also compute the real Fourier--Mukai transform of 
a Cayley cycle, a Cayley submanifold with an ASD connection over it, 
and show the following.

\begin{theorem}[Theorem \ref{thm:FM Spin7 conn}] \label{thm:FM Spin7 conn-1}
Let $B\subset\mathbb{R}^4$ be an open set and 
$f:B\to T^{4}$ be a smooth function. 
Denote by $S=\{\,(x,f(x))\mid x\in B\,\}$ 
the graph of $f$, a 4-dimensional submanifold in $X=B\times T^{4}$. 
Let $\n^B$ be a Hermitian connection 
of a trivial complex line bundle $B \times \C \to B$. 
Denote by $F_\n^B \in \i \Om^2(B)$ the curvature of $\n^B$.

By the real Fourier--Mukai transform, 
the pair $(S, \n^B)$ corresponds to a Hermitian connection $\n$ 
of a trivial complex line bundle over $B\times (T^{4})^* \cong X$. 
Denote by $F_\n \in \i \Om^2 (X)$ the curvature 2-form of $\n$.  
Then, the following conditions are equivalent. 
\begin{enumerate}
\item
The graph $S$ is a Cayley submanifold with an appropriate orientation 
and if we identify $- \i F_\n^B \in \Om^2(B)$ with a 2-form on $S$, 
it is anti-self-dual with respect to the induced metric 
and the orientation which makes $S$ Cayley. 
\item
The Hermitian connection $\nabla$ satisfies 
\[
\pi^2_7 \left( F_\nabla + \frac{1}{6} * F_\nabla^3 \right) =0
\quad\mbox{and}\quad  \pi^4_7  \left(F_\nabla^{2} \right)=0.
\]
\end{enumerate}
\end{theorem}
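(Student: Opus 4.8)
The plan is to build on Theorem~\ref{thm:FM Spin7-1}, which already characterises when the graph $S$ is Cayley, and to identify the extra datum $\nabla^B$ with an anti-self-duality condition on $S$. \textbf{Step 1 (additivity of the curvature).} First I would compute the curvature of the Fourier--Mukai transform of the pair $(S,\nabla^B)$ and show that it decouples as
\[
F_\nabla \;=\; F_\nabla^S \;+\; p^*F_\nabla^B,
\]
where $p\colon X=B\times (T^4)^*\to B$ is the projection and $F_\nabla^S$ is the curvature attached to the bare section $f$ in Theorem~\ref{thm:FM Spin7-1}; this is the standard fact that tensoring the transformed line bundle by a bundle pulled back from the base adds the pulled-back curvature. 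Writing $G:=-\i F_\nabla^B\in\Om^2(B)$ and $\widetilde G:=p^*G$, this reads $F_\nabla=F_\nabla^S+\i\widetilde G$, and since both summands are invariant along the dual torus fibre, all the conditions in (2) may be checked pointwise over $B$.

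\textbf{Step 2 (expansion).} Next I would substitute this decomposition into the two expressions in (2). Because $\widetilde G$ is pulled back from the $4$-dimensional base, $\widetilde G^{\,3}=0$, so the relevant powers truncate to
\[
F_\nabla^2=(F_\nabla^S)^2+2\i\,F_\nabla^S\wedge\widetilde G-\widetilde G^{\,2},\qquad
F_\nabla^3=(F_\nabla^S)^3+3\i\,(F_\nabla^S)^2\wedge\widetilde G-3\,F_\nabla^S\wedge\widetilde G^{\,2}.
\]
The terms built only from $F_\nabla^S$ are precisely those governing the Cayley condition through Theorem~\ref{thm:FM Spin7-1}; the remaining terms are linear and quadratic in $\widetilde G$, and the goal becomes to show that their $\pi^2_7$- and $\pi^4_7$-projections vanish exactly when $G$ is anti-self-dual on $S$.

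\textbf{Step 3 (matching with anti-self-duality).} The core of the argument is carried out pointwise, in an adapted orthonormal coframe in which the $\Sp$-form has its standard shape and $TS$ is the standard Cayley $4$-plane; in such a frame the identification of $-\i F_\nabla^B$ with a $2$-form on $S$ is explicit and the splitting $\Om^2(S)=\Om^2_+(S)\oplus\Om^2_-(S)$ is visible. I would then compute how the pulled-back form $\widetilde G$, together with its cross terms with $F_\nabla^S$, distributes among the $\Sp$-irreducible summands $\Om^2=\Om^2_7\oplus\Om^2_{21}$ and $\Om^4=\Om^4_1\oplus\Om^4_7\oplus\Om^4_{27}\oplus\Om^4_{35}$. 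The expected outcome is that the self-dual part $G^+$ feeds precisely into the $7$-dimensional components detected by $\pi^2_7$ and $\pi^4_7$, so that, once the Cayley condition constrains $F_\nabla^S$, these projections vanish if and only if $G^+=0$, i.e.\ $G$ is anti-self-dual.

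Assembling the three steps, condition (2) holds if and only if the pure-graph part vanishes (equivalently $S$ is Cayley, by Theorem~\ref{thm:FM Spin7-1}) and the $\widetilde G$-dependent part vanishes (equivalently $-\i F_\nabla^B$ is anti-self-dual on $S$), which is condition (1). The main obstacle lies in Step~3: one must track factors and signs in the cross terms with care and, above all, verify that the Cayley condition and the anti-self-duality condition genuinely decouple --- that is, that the linear-in-$G$ contributions coming from $F_\nabla$ and from $*F_\nabla^3$ (together with the quadratic term $\widetilde G^{\,2}$ in $\pi^4_7(F_\nabla^2)$) combine, after imposing the Cayley condition on $F_\nabla^S$, to isolate $G^+$ with no residual coupling to the graph data.
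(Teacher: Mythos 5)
Your Steps 1 and 2 match the paper's opening moves exactly: the paper also writes $F_\nabla = F^B_\nabla + F^S_\nabla$, notes $(F^B_\nabla)^3=0$ and $(F^B_\nabla)^2\wedge F^S_\nabla=0$ (so your retained term $F^S_\nabla\wedge \widetilde G^{\,2}$ is in fact zero by counting $dx$'s, and $\pi^4_7\bigl((F^B_\nabla)^2\bigr)=0$ automatically since $(F^B_\nabla)^2\propto dx^{0123}$), and then uses the $dx$/$dy$ type decomposition to split condition (2) into four decoupled equations. Two of these reproduce Theorem \ref{thm:FM Spin7} and say exactly that $S$ is Cayley; the residual pair is
\begin{equation*}
\pi^2_7\left(F^B_\nabla + \frac{1}{2}\,{*}\left(F^B_\nabla\wedge (F^S_\nabla)^2\right)\right)=0
\qquad\mbox{and}\qquad
\pi^4_7\left(F^B_\nabla\wedge F^S_\nabla\right)=0.
\end{equation*}
Up to here your plan is sound and is the paper's argument.

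The genuine gap is your Step 3, which is precisely where the paper's real work lies and which you leave as an ``expected outcome.'' The two residual equations above do \emph{not} decouple naively into ``$G^+=0$'' in a frame adapted to $TS$: anti-self-duality in condition (1) is with respect to the \emph{induced} metric on the tilted graph, and the cross terms ${*}\bigl(F^B_\nabla\wedge (F^S_\nabla)^2\bigr)$ and $F^B_\nabla\wedge F^S_\nabla$ do not vanish when $S$ is Cayley --- they encode exactly the distortion between the horizontal plane $W_0={\rm span}\{\partial/\partial x^i\}$ and $T S$. The paper resolves this by citing a nontrivial algebraic identity (\cite[Theorem A.8 (2)]{KYSpin7}): on a Cayley graph the residual pair is jointly equivalent to the single condition $\pi^2_7\bigl(\bigl((\id_{TX}+(F^S)^\sharp)^{-1}\bigr)^* F^B_\nabla\bigr)=0$, i.e.\ one must first pull $F^B_\nabla$ back through the graph map $\kappa(x)=(x,f(x))$, whose differential is $(\id_{TX}+(F^S)^\sharp)_*|_{W_0}$, before testing anything. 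It then concludes with Lemma \ref{lem:Cayley ASD}, proved separately in the paper: on a Cayley $4$-plane, a $2$-form $\alpha$ satisfies $\pi^2_7(\alpha)=0$ if and only if $\alpha$ is anti-self-dual, because the self-dual forms $\tau_i$ span the $\Lambda^2_7$-component visible from the plane. Your proposed pointwise frame computation could in principle reprove the combination identity, but as written it omits the pullback by $(\id_{TX}+(F^S)^\sharp)^{-1}$ entirely, so the claimed isolation of $G^+$ ``with no residual coupling to the graph data'' is an assertion, not a proof --- and in the form stated (self-duality measured before transporting $G$ through the graph map) it would be false whenever $f$ is non-constant.
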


Based on these theorems, we suggest the following definition. 

\begin{definition} \label{def:Spin7dDTMain}
Let $X^8$ be an 8-manifold with a ${\rm Spin}(7)$-structure 
$\Phi \in \Om^4$ 
and $L \to X$ be a smooth complex line bundle with a Hermitian metric $h$.
Denote by $\Om^k_\l \subset \Om^k$ the subspace of the space of $k$-forms 
corresponding to the $\l$-dimensional irreducible representation of $\Sp$ as in Subsection \ref{sec:Spin7 geometry}. 
Let $\pi^k_\l : \Om^k \rightarrow \Om^k_\l$ be the projection. 
A Hermitian connection $\nabla$ of $(L,h)$ satisfying 
\begin{align}\label{eq:Spin7dDT intro}
\pi^2_{7} \left( F_\nabla + \frac{1}{6} * F_\nabla^3 \right) = 0 
\quad \mbox{and} \quad 
\pi^{4}_{7}(F_{\nabla}^2) &= 0
\end{align}
is called a \emph{deformed Donaldson--Thomas connection 
for a manifold with a ${\rm Spin}(7)$-structure (a $\Sp$-dDT connection)}. 
Here, we regard the curvature 2-form $F_\n$ of $\n$ as a $\i \R$-valued closed 2-form on $X$. 
\end{definition}

In this paper, we post some evidences showing that Definition \ref{def:Spin7dDTMain} 
we suggest for a ${\rm Spin}(7)$-manifold 
is compatible with dDT connections for a $G_2$-manifold and 
dHYM connections for a Calabi-Yau 4-manifold 
in Lemmas \ref{lem:Spin7 G2} and \ref{lem:Spin7 CY4}.

We also compute the real Fourier--Mukai transform 
of (co)associative cycles in $G_2$-manifolds. 
This makes us confirm the definition of 
deformed Donaldson--Thomas connections 
for a manifold with a $G_2$-structure introduced by Lee and Leung \cite{LL}. 
This is also useful in the computation 
of the real Fourier--Mukai transform of Cayley cycles. 
It turns out that 
the real Fourier--Mukai transform of an associative cycle 
coincides with that of a coassociative cycle as stated in \cite{LL}. 
Moreover, the real Fourier--Mukai transform implies 
identities mirror to associator and Cayley equalities.  
In \cite{KYvol}, we show them 
and dDT connections for $G_2$- and $\Sp$-manifolds 
minimize a kind of the volume functional, which is called the Dirac-Born-Infeld (DBI) action in physics. 

This paper is organized as  follows. 
In Section \ref{sec:FM}, we explain the real Fourier--Mukai transform in detail. 
Section \ref{sec:basic} gives basic identities and some decompositions 
of the spaces of differential forms in $G_2$- and ${\rm Spin}(7)$-geometry 
that are used in this paper. 
In Section \ref{sec:dDT G2}--\ref{sec:dDT G2 2} 
we give computations of real Fourier--Mukai transforms 
and show 
Theorems \ref{thm:FM Spin7-1} and \ref{thm:FM Spin7 conn-1}. 
In Section \ref{sec:suggestSpin7dDT}, we 
show compatibilities of our ${\rm Spin}(7)$-dDT connections 
with dDT connections for a $G_2$-manifold and dHYM connections for a Calabi-Yau 4-manifold. 
In Appendix \ref{app:notation}, 
we summarize the notation used in this paper. 

\vspace{0.5cm}
\noindent{{\bf Acknowledgements}}: 
The authors would like to thank anonymous referees for the careful reading 
of an earlier version of this paper and many useful comments which helped to improve the quality of the paper.

\section{The real Fourier--Mukai transform} \label{sec:FM}
In this section, we explain the real Fourier--Mukai transform. 
We need the following two fundamental facts. 
The first one is that a representation $\rho:\pi_{1}(M)\to GL(k,\mathbb{R})$ naturally 
assigns a flat connection $\tilde{\nabla}$ of $\mathbb{R}^{k}$-bundle $E$ over a manifold $M$ by 
\[E:=\tilde{M}\times_{\rho}\mathbb{R}^{k}:=(\tilde{M}\times \mathbb{R}^{k})/{\sim}, \]
where $\tilde{M}$ is the universal cover of $M$ and $(x,v)\sim 
(x \cdot \gamma, \rho(\gamma)^{-1}v)$ 
for $\gamma\in \pi_{1}(M)$. 
The flat connection $\tilde{\nabla}$ of $E$ is induced from the exterior derivative $d$ on $\tilde{M}\times \mathbb{R}^{k}$. 
The second one is that an $n$-dimensional torus $T^{n}$ $(=\mathbb{R}^{n}/(2\pi\mathbb{Z})^{n})$ is canonically 
isomorphic to 
\[\mathop{\mathrm{Hom}}(\pi_{1}((T^{n})^{*}),U(1))=\mathop{\mathrm{Hom}}(((2 \pi \Z)^{n})^{*},U(1)), \]
the set of all homomorphisms from the first fundamental group of its dual torus 
$(T^{n})^{*}$ $(=(\mathbb{R}^{n})^{*}/((2\pi\mathbb{Z})^{n})^{*})$ 
to $U(1)$ by 
\[T^{n}\ni a=[\tilde{a}]\mapsto \rho_{a}:=e^{-\sqrt{-1}\langle\,\cdot\,,\tilde{a}\rangle}\in \mathop{\mathrm{Hom}}(\pi_{1}((T^{n})^{*}),U(1)), \]
where $\langle\,\cdot\,,\cdot\, \rangle : (\R^n)^* \times \R^n \to \R$ 
is a dual pairing. 
Then, combining these two facts with $M=(T^{n})^{*}$, we see that 
a point $a$ in $T^{n}$ assigns a flat Hermitian connection $\tilde{\nabla}^{a}$ 
of a complex line bundle $E_{a}:=(\R^n)^{*}\times_{\rho_{a}}\mathbb{C}$ with the standard Hermitian metric 
over the dual torus $(T^{n})^{*}$. 
Actually, $\pi_{a}:E_{a}\to (T^{n})^{*}$ is isomorphic to the trivial $\mathbb{C}$-bundle $\pi_{0}:\underline{\mathbb{C}}\to (T^{n})^{*}$ since 
we have a nonvanishing section 
$s(y):=[\tilde y,e^{\sqrt{-1}\langle\, \tilde{y},\tilde{a}\rangle} \cdot 1]$ of $E_{a}$,
where 
$\tilde y \in (\R^n)^*$ representing 
$y \in (T^n)^* =(\mathbb{R}^{n})^{*}/((2\pi\mathbb{Z})^{n})^{*}$ and 
$1$ is the trivial section of $(\mathbb{R}^{n})^{*}\times \mathbb{C}$. 
The bundle isomorphism $\xi: E_{a}\to \underline{\mathbb{C}}$ is given, on each fiber, by 
\[\pi_{a}^{-1}(y)\ni c\cdot s(y)\mapsto c \in \mathbb{C}\,\,(=\pi_{0}^{-1}(y)). \]
Then, a flat Hermitian connection of $\underline{\mathbb{C}}$ is induced from $\tilde{\nabla}^{a}$ of $E_{a}$ and denote it by $\nabla^{a}$. 
The connection 1-form of $\nabla^{a}$ with respect to the section 
$1\in \Gamma((T^{n})^{*}, \underline{\mathbb{C}})$ 
is represented as 
\[
\begin{aligned}
\nabla^{a}1=\xi^{-1}(\tilde{\nabla}^{a}(\xi(1)))
=&e^{-\sqrt{-1}\langle\,\cdot\,,\tilde{a}\rangle}d(e^{\sqrt{-1}\langle\,\cdot\,,\tilde{a}\rangle}\cdot 1)\\
=&\left(\sqrt{-1}d\langle\,\cdot\,,\tilde{a}\rangle\right)\otimes 1. 
\end{aligned}
\]
In summary, a point $a=[(a^{1},\cdots,a^{n})]\in T^{n}$ assigns an equivalence class of a Hermitian complex line bundle 
with a flat connection over $(T^{n})^{*}$ and 
one of its representatives is the trivial $\mathbb{C}$-bundle with the standard Hermitian metric and a flat Hermitian connection $\nabla^{a}$ defined  by
\[\nabla^{a}:=d+\sqrt{-1}\sum_{i=1}^{n}a^{i}dy^{i}, \]
where $y=(y^{1},\cdots,y^{n})$ are the standard coordinates on $(T^{n})^{*}$. 
This correspondence $a\mapsto \nabla^{a}$ 
is also explained in \cite[Section 3.2.1]{DK}.

When we consider the family of this correspondence, 
we get the real Fourier--Mukai transform. 
Precisely, let $B\subset \mathbb{R}^{k}$ be an open set with coordinates $x=(x^{1},\cdots,x^{k})$ and 
$f=(f^{1},\cdots,f^{n}):B\to T^{n}$ be a smooth map. 
Then, we get two objects: a submanifold and a connection. 
The $k$-dimensional submanifold in $X:=B\times T^{n}$, denoted by $S$, is defined as the graph of $f$, that is, 
\[S:=\{\,(x,f(x))\mid x\in B\,\}. \]
On the other hand, taking the family of $\nabla {}^{f(x)}$ for all $x\in B$, we get a Hermitian connection 
\[\nabla:=d+\sqrt{-1}\sum_{i=1}^{n}f^{i}dy^{i}\]
of the trivial $\mathbb{C}$-bundle over $X^{*}:=B\times (T^{n})^{\ast}$. 
We usually identify $B\times (T^{n})^{\ast}$ with $B\times T^{n}$. 
We call $\nabla$ \emph{the real Fourier--Mukai transform} of $S$. 
Basically, a property on $S$ is first interpreted as one of $f$ and second reinterpreted as one of $\nabla$. 
We remark that the real Fourier--Mukai transform of $(S,\nabla^{B})$, the pair of a graph of $f$ and a 
Hermitian connection $\nabla^{B} = d+\sqrt{-1}\sum_{i=1}^{k}A^{i}dx^{i}$ 
of the trivial $\mathbb{C}$-bundle over $B \cong S$, is also defined by 
\[\nabla:=d+\sqrt{-1}\sum_{i=1}^{k}A^{i}dx^{i}+\sqrt{-1}\sum_{i=1}^{n}f^{i}dy^{i}\]
as a Hermitian connection of the trivial $\mathbb{C}$-bundle over $X^{*}=B\times (T^{n})^{\ast}$. 

\section{Basics on $G_2$- and ${\rm Spin}(7)$-geometry} \label{sec:basic}
In this section, 
we collect some basic definitions and equations on 
$G_2$- and ${\rm Spin}(7)$-geometry 
which we need in the calculations
in this paper for the reader's convenience. 
See for example \cite{Bryant, HL, Kar} for references.

\subsection{The Hodge-$\ast$ operator}
Let $V$ be an $n$-dimensional oriented real vector space with an inner product $g$. 
Denote by $\la \cdot, \cdot \ra$ the 
induced inner product on $\Lambda^k V^*$ from $g$. 
Let $\ast$ be the Hodge-$\ast$ operator.
The following identities are frequently used throughout this paper. 

For $\alpha, \beta \in \Lambda^k V^*$ and $v \in V$, we have 
\[
\begin{aligned}
\ast^2|_{\Lambda^k V^*} &= (-1)^{k(n-k)} {\rm id}_{\Lambda^k V^*}, & 
\la \ast \alpha, \ast \beta \ra &= \la \alpha, \beta \ra, \\ 
i(v) \ast \alpha &= (-1)^k \ast (v^\flat \wedge \alpha), & 
\ast( i(v) \alpha) &= (-1)^{k+1} v^\flat \wedge \ast \alpha. 
\end{aligned}
\]



\subsection{Basics on $G_2$-geometry} \label{sec:G2 geometry}
Let $V$ be an oriented $7$-dimensional vector space. A \emph{$G_2$-structure} on $V$ is 
a 3-form $\varphi \in \Lambda^3 V^*$ such that there is a positively oriented basis 
$\{\, e_i \,\}_{i=1}^7$ of $V$ 
with the dual basis $\{\, e^i \,\}_{i=1}^7$ of $V^\ast$ satisfying 
\begin{equation} \label{varphi}
\varphi = e^{123} + e^{145} + e^{167} + e^{246} - e^{257} - e^{347} - e^{356},
\end{equation}
where $e^{i_1 \cdots i_k}$ is short for $e^{i_1} \wedge \cdots \wedge e^{i_k}$. Setting $\vol := e^{1 \cdots 7}$, 
the 3-form $\varphi$ uniquely determines an inner product $g_\varphi$ via 
\begin{align} \label{eq:form-1def}
g_\varphi(u,v)\; \vol = \dfrac16 i(u) \varphi \wedge i(v) \varphi \wedge \varphi 
\end{align}
for $u,v \in V$. 
It follows that any oriented basis $\{\, e_i \,\}_{i=1}^7$ for which \eqref{varphi} holds is orthonormal with respect to $g_\varphi$. Thus, the Hodge-dual of $\varphi$ with respect to $g_\varphi$ is given by 
\begin{equation} \label{varphi*}
\ast \varphi = e^{4567} + e^{2367} + e^{2345} + e^{1357} - e^{1346} - e^{1256} - e^{1247}.
\end{equation}
The stabilizer of $\varphi$ is known to be the exceptional $14$-dimensional simple Lie group 
$G_2 \subset {\rm GL}(V)$. The elements of $G_2$ preserve both $g_\varphi$ and $\vol$, that is, 
$G_2 \subset {\rm SO}(V, g_\varphi)$.

We summarize important well-known facts about the decomposition of tensor products of $G_2$-modules into irreducible summands. 
Denote by $V_k$ the $k$-dimensional irreducible $G_2$-module if there is a unique such module. For instance, $V_7$ is the irreducible $7$-dimensional $G_2$-module $V$ from above, 
and $V_7^* \cong V_7$. For its exterior powers, we obtain the decompositions
\begin{equation} \label{eq:DiffForm-V7}
\begin{array}{rlrl}
\Lambda^0 V^* \cong \Lambda^7 V^* \cong V_1, \quad
& \Lambda^2 V^*  \cong \Lambda^5 V^* \cong V_7 \oplus  V_{14},\\[2mm]
\Lambda^1 V^* \cong \Lambda^6 V^* \cong V_7, \quad
& \Lambda^3 V^* \cong \Lambda^4 V^* \cong V_1 \oplus V_7 \oplus V_{27},
\end{array}
\end{equation}
where $\Lambda^k V^* \cong \Lambda^{7-k} V^*$ due to the $G_2$-invariance of the Hodge isomorphism $\ast: \Lambda^k V^* \to \Lambda^{7-k} V^*$. We denote by $\Lambda^k_\l V^* \subset \Lambda^k V^*$ the subspace isomorphic to $V_\l$. 
Let 
\[
\pi^k_\l: \Lambda^k V^* \rightarrow \Lambda^k_\l V^*
\]
be the canonical projection. 
Identities for these spaces we need in this paper are as follows. 
\begin{equation*}
\begin{aligned}
\Lambda^2_7 V^* =& \{\, i(u) \varphi \mid u \in V \,\} 
= \{\, \alpha \in \Lambda^2 V^* \mid * (\varphi \wedge \alpha) = 2 \alpha \,\},\\
\Lambda^2_{14} V^* =& \{\, \alpha \in \Lambda^2 V^* \mid \ast \varphi \wedge \alpha = 0\,\} 
= \{\, \alpha \in \Lambda^2 V^* \mid * (\varphi \wedge \alpha) = - \alpha \,\},\\
\Lambda^3_1 V^* =& \R \varphi, \\
\Lambda^3_7 V^* =& \{\, i(u) * \varphi \in \Lambda^3 V^* \mid u \in V \,\}. 
\end{aligned}
\end{equation*}
The following equations are well-known and useful in this paper. 

\begin{lemma} \label{lem:G2 identities}
For any $u \in V$, we have the following identities. 
\[
\begin{aligned}
\varphi \wedge i(u) * \varphi &= -4 * u^{\flat}, 
\\
* \varphi \wedge i(u) \varphi &= 3 * u^{\flat}, \\
\varphi \wedge i(u) \varphi &= 2 * (i(u) \varphi) = 2 u^{\flat} \wedge * \varphi. 
\end{aligned}
\]
\end{lemma}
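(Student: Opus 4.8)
The plan is to exploit $G_2$-equivariance so that each identity reduces to a single evaluation. Consider the two linear maps $V \to \Lambda^6 V^*$ given by $u \mapsto \varphi \wedge i(u)\ast\varphi$ and $u \mapsto \ast\varphi \wedge i(u)\varphi$. Each is built from operations that intertwine the $G_2$-action, namely the interior product, wedging with the invariant forms $\varphi$ and $\ast\varphi$, and the Hodge star (equivariant since $G_2 \subset {\rm SO}(V,g_\varphi)$); hence each map is $G_2$-equivariant. By \eqref{eq:DiffForm-V7} both $V \cong V_7$ and $\Lambda^6 V^* \cong V_7$ are irreducible, and they are mutually isomorphic via the equivariant map $u \mapsto \ast u^\flat$. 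Schur's lemma then forces each of the two maps to be a constant multiple of $u \mapsto \ast u^\flat$, so it suffices to evaluate both sides on the single basis vector $u = e_1$ and read off the constant.

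Before any wedge computation I would dispose of the third identity, which lives in $\Lambda^5 V^*$ and is handled directly. Its second equality, $\ast(i(u)\varphi) = u^\flat \wedge \ast\varphi$, is immediate from the general Hodge relation $\ast(i(v)\alpha) = (-1)^{k+1} v^\flat \wedge \ast\alpha$ with $\alpha = \varphi$ and $k = 3$. Its first equality, $\varphi \wedge i(u)\varphi = 2\ast(i(u)\varphi)$, needs no explicit computation either: since $i(u)\varphi \in \Lambda^2_7 V^*$, the stated characterization $\ast(\varphi \wedge \alpha) = 2\alpha$ of that space, together with $\ast^2 = \id$ in the relevant degrees, yields $\varphi \wedge i(u)\varphi = 2\ast(i(u)\varphi)$ at once.

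For the first and second identities I would then carry out the base computation at $u = e_1$ using \eqref{varphi} and \eqref{varphi*}. Here $i(e_1)\varphi = e^{23} + e^{45} + e^{67}$ and $i(e_1)\ast\varphi = e^{357} - e^{346} - e^{256} - e^{247}$, while $\ast u^\flat = \ast e^1 = e^{234567}$. In each wedge product only the summands whose index sets are complementary within $\{2,\dots,7\}$ survive, so one merely collects the coefficient of $e^{234567}$: for $\ast\varphi \wedge i(e_1)\varphi$ the three surviving pairs each contribute $+1$, giving the constant $3$, and for $\varphi \wedge i(e_1)\ast\varphi$ the four surviving pairs each contribute $-1$, giving $-4$.

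The only genuine obstacle is the sign bookkeeping in these wedge products, that is, correctly tracking the permutation signs that rearrange each monomial into $e^{234567}$; this is routine but error-prone. The $G_2$-equivariance reduction is exactly what keeps it confined to the single vector $e_1$ rather than a general $u = \sum_i u^i e_i$. As a cross-check one could instead contract both sides with $e_1$ against $\vol$, which confirms the constants $3$ and $-4$ without reordering long monomials.
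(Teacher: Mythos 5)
Your proposal is correct. Note, however, that the paper itself offers no proof of Lemma \ref{lem:G2 identities}: it states the identities as well-known, pointing to the references \cite{Bryant, HL, Kar}, so there is no in-paper argument to compare against. Your argument is a sound self-contained verification: the reduction of the third identity to the membership $i(u)\varphi \in \Lambda^2_7 V^*$ together with the general relation $\ast(i(v)\alpha) = (-1)^{k+1} v^\flat \wedge \ast\alpha$ is exactly right (and $\ast^2 = \id$ holds in every degree in dimension $7$), and your base evaluations at $u = e_1$ check out: $\ast\varphi \wedge i(e_1)\varphi$ collects three complementary pairs, each with sign $+1$, while $\varphi \wedge i(e_1)\ast\varphi$ collects four pairs, each contributing $-1$ after the permutation signs are tracked, yielding the constants $3$ and $-4$ against $\ast e^1 = e^{234567}$. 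One small point worth making explicit: applying Schur's lemma over $\R$ to conclude that the space of equivariant maps $V_7 \to \Lambda^6 V^* \cong V_7$ is one-dimensional uses that $V_7$ is absolutely irreducible (its complexification is the irreducible $7$-dimensional representation of complexified $G_2$); alternatively you could avoid Schur entirely by invoking the transitivity of $G_2$ on the unit sphere $S^6 \subset V$, which propagates the evaluation at $e_1$ to all unit vectors and then to all of $V$ by homogeneity.
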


\begin{definition} \label{def:G2mfd}
Let $X$ be an oriented 7-manifold. A \emph{$G_2$-structure} on $X$ is a $3$-form $\varphi \in \Om^3$ 
such that at each $p \in X$ there is a positively oriented basis 
$\{\, e_i \,\}_{i=1}^7$ of $T_p X$ such that $\varphi_p \in \Lambda^3 T^*_p X$ is of the form \eqref{varphi}. As noted above, $\varphi$ determines a unique Riemannian metric $g = g_\varphi$ on $X$ by \eqref{eq:form-1def}, 
and the basis $\{\, e_i \,\}_{i=1}^7$ is orthonormal with respect to $g$.
A $G_2$-structure $\varphi$ is called \emph{torsion-free} if 
it is parallel with respect to the Levi-Civita connection of $g=g_\varphi$. 
A manifold with a torsion-free $G_2$-structure is called a \emph{$G_2$-manifold}. 
\end{definition}

A manifold $X$ admits a $G_2$-structure if and only if 
its frame bundle is reduced to a $G_2$-subbundle. 
Hence, considering its associated subbundles, 
we see that 
$\Lambda^* T^* X$ has the same decomposition as in \eqref{eq:DiffForm-V7}. 
The algebraic identities above also hold.

\subsection{Associative and coassociative submanifolds}
On a $G_2$-manifold $(X, \varphi)$, the $G_2$-structure $\varphi$ and its Hodge dual $* \varphi$ 
are known to be calibrations. 
The corresponding calibrated submanifolds are called \emph{associative submanifolds} 
and \emph{coassociative submanifolds}, respectively. 
By \cite{HL, McLean}, we can characterize these submanifolds as follows. 

\begin{lemma} \label{lem:asso coasso char}
An oriented 3-dimensional submanifold $A \subset X$ is associative with an appropriate orientation 
if and only if $* \varphi (v_1, v_2, v_3, \cdot) =0$ for any $p \in A$ and $v_1, v_2, v_3 \in T_p S$. 
An oriented 4-dimensional submanifold $C \subset X$ is coassociative with an appropriate orientation 
if and only if the restriction of $\varphi$ to $C$ vanishes. 
\end{lemma}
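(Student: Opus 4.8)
The plan is to prove both characterizations by reducing them to pointwise facts of linear algebra on $V=T_pX$ with its induced $G_2$-structure, since each condition is fibrewise: a submanifold is associative (resp. coassociative) precisely when at every point its tangent plane is a $3$-plane (resp. $4$-plane) on which the relevant calibration restricts to the induced volume form. Recall from the previous subsection that $\varphi$ and $*\varphi$ are calibrations, hence of comass one; so for any oriented $3$-plane $P$ with induced volume form $\vol_P$ one has $\varphi|_P \le \vol_P$ with equality exactly when $P$ is associative, and similarly $*\varphi|_C \le \vol_C$ for a $4$-plane $C$ with equality exactly when $C$ is coassociative. It therefore suffices to translate these equality cases into the stated vanishing conditions.

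First I would treat the associative case. Introduce the vector-valued $3$-form $\chi$ defined by $\la \chi(v_1,v_2,v_3), z\ra = *\varphi(v_1,v_2,v_3,z)$, equivalently $\chi(v_1,v_2,v_3)^\flat = i(v_3)i(v_2)i(v_1)*\varphi$; then the condition in the lemma, that $*\varphi(v_1,v_2,v_3,\cdot)=0$ for all $v_1,v_2,v_3 \in T_pA$, is exactly $\chi|_A = 0$. The heart of the matter is the pointwise identity
\[
\varphi(u,v,w)^2 + |\chi(u,v,w)|^2 = |u \wedge v \wedge w|^2 ,
\]
where the right-hand side is the Gram determinant. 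Granting this, for an oriented orthonormal basis $u,v,w$ of a $3$-plane $P$ the right-hand side equals $1$, so $\varphi(u,v,w)^2 = 1 - |\chi(u,v,w)|^2 \le 1$ (recovering comass one), and $\chi|_P = 0$ if and only if $\varphi(u,v,w)=\pm 1$, i.e. if and only if $P$ is associative for one of its two orientations. This yields both the characterization and the ``appropriate orientation'' clause.

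For the coassociative case I would argue in parallel. For an oriented $4$-plane $C$ with orthonormal basis and induced volume form $\vol_C$, I would establish the companion identity $|\varphi|_C|^2 + \la *\varphi|_C, \vol_C\ra^2 = 1$, where $\varphi|_C \in \Lambda^3 C^*$ is the restriction. Granting this, $\varphi|_C = 0$ holds if and only if $\la *\varphi|_C, \vol_C\ra = \pm 1$, i.e. $*\varphi|_C = \pm \vol_C$, which is precisely coassociativity for one of the two orientations of $C$. Alternatively one can avoid a separate computation through the orthogonal complement: a direct check on the model \eqref{varphi}, \eqref{varphi*} shows that $\rmspan(e_1,e_2,e_3)$ and $\rmspan(e_4,e_5,e_6,e_7)$ are orthogonal complements, and since $G_2$ preserves the metric and acts transitively on each family, $C$ is coassociative if and only if $C^\perp$ is associative; one then needs only the Hodge-duality bridge $\varphi|_C = 0 \iff \chi|_{C^\perp} = 0$, reducing to the case already settled.

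The main obstacle is the pair of pointwise norm identities, $\varphi(u,v,w)^2 + |\chi(u,v,w)|^2 = |u\wedge v\wedge w|^2$ and its $4$-plane companion. Both sides are $G_2$-invariant polynomial expressions, but $G_2$ is \emph{not} transitive on orthonormal $3$-frames, so one cannot simply evaluate on a single normal form. I would prove them either by octonionic algebra, writing $\varphi(u,v,w) = \la u \times v, w\ra$ for the associated cross product, expressing $\chi$ through the octonion product, and invoking multiplicativity of the norm; or by a direct computation from the explicit expressions \eqref{varphi} and \eqref{varphi*} together with Lemma \ref{lem:G2 identities}, after reducing $\varphi|_P$ to a convenient normal form under the residual $O(3)$-action on the frame. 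The sign ambiguity in each identity is exactly what produces the ``appropriate orientation'' in the statement.
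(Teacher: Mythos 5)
The paper gives no proof of this lemma at all --- it simply cites Harvey--Lawson and McLean --- and your argument is exactly the route of the cited source: the associator equality $\varphi(u,v,w)^2+|\chi(u,v,w)|^2=|u\wedge v\wedge w|^2$ with its equality/comass analysis for the associative case, and the companion $4$-plane identity (or equivalently the orthogonal-complement reduction, whose bridge $\varphi|_C=0\iff\chi|_{C^\perp}=0$ is indeed a clean consequence of Hodge duality), so your proposal is correct and matches the standard proof. One shortcut worth noting: your companion identity $|\varphi|_C|^2+\la *\varphi|_C,\vol \ra^2=1$ for an orthonormal $4$-plane $C$ also drops out of the paper's own Lemma~\ref{lem:Cayley id} by taking all four vectors $u_0,\dots,u_3$ in $V=(\R e_0)^\perp\subset W$, since then $\Phi(u_0,\dots,u_3)=*_7\varphi(u_0,\dots,u_3)$ and $8|\tau(u_0,\dots,u_3)|^2=\sum_{\mu}\bigl((e^\mu\wedge\varphi)(u_0,\dots,u_3)\bigr)^2=|\varphi|_C|^2$, which would spare you the separate cohomogeneity-one or octonionic computation you flag as the main obstacle.
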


\subsection{${\rm Spin}(7)$-geometry} \label{sec:Spin7 geometry}
Let $W$ be an $8$-dimensional oriented real vector space. 
A \emph{$\Sp$-structure on $W$} is 
a 4-form $\Phi \in \Lambda^4 W^*$ such that there is a positively oriented basis 
$\{\, e_i \,\}_{i=0}^7$ of $W$ 
with dual basis $\{\, e^i \,\}_{i=0}^7$ of $W^\ast$ satisfying 
\begin{equation}\label{Phi4}
\begin{aligned}
\Phi := & e^{0123} + e^{0145} + e^{0167} + e^{0246} - e^{0257} - e^{0347} - e^{0356}\\
& + e^{4567} + e^{2367} + e^{2345} + e^{1357} - e^{1346} - e^{1256} - e^{1247}, 
\end{aligned}
\end{equation}
where $e^{i_1 \cdots i_k}$ is short for $e^{i_1} \wedge \cdots \wedge e^{i_k}$.
Defining forms 
$\varphi$ and $\ast_7 \varphi$ on $V := \rmspan \{\, e_i \,\}_{i=1}^7 \subset W$ 
as in \eqref{varphi} and \eqref{varphi*}, where $*_7$ stands for the Hodge star operator on $V$, 
we have 
\[
\Phi = e^0 \wedge \varphi + \ast_7 \varphi. 
\]
Note that $\Phi$ is self-dual, that is, $*_8 \Phi = \Phi$, where 
$*_8$ is the Hodge star operator on $W$. 
It is known that $\Phi$ uniquely determines an inner product $g_\Phi$ and a volume form 
and the subgroup of ${\rm GL}(W)$ preserving $\Phi$ is isomorphic to $\Sp$. 
As in Definition \ref{def:G2mfd}, we can define 
an 8-manifold with a $\Sp$-structure and a $\Sp$-manifold.

Denote by $W_k$ the $k$-dimensional irreducible $\Sp$-module if there is a unique such module. 
For example, $W_8$ is the irreducible $8$-dimensional $\Sp$-module from above, and $W_8^* \cong W_8$. 
The group $\Sp$ acts irreducibly 
on $W_7 \cong \R^7$ as the double cover of ${\rm SO}(7)$. 
For its exterior powers, we obtain the decompositions 
\begin{equation*}
\begin{aligned}
\Lambda^0 W^* &\cong \Lambda^8 W^* \cong W_1, \quad
& \Lambda^2 W^*  \cong \Lambda^6 W^* &\cong W_7 \oplus  W_{21},\\
\Lambda^1 W^* &\cong \Lambda^7 W^* \cong W_8, \quad
& \Lambda^3 W^* \cong \Lambda^5 W^* &\cong W_8 \oplus W_{48},\\
\Lambda^4 W^* &\cong W_1 \oplus W_7 \oplus W_{27} \oplus W_{35}
\end{aligned}
\end{equation*}
where $\Lambda^k W^* \cong \Lambda^{8-k} W^*$ due to the $\Sp$-invariance of the Hodge isomorphism $\ast_8: \Lambda^k W^* \to \Lambda^{8-k} W^*$. Again, we denote by $\Lambda^k_\l W^* \subset \Lambda^k W^*$ the subspace isomorphic to $W_\l$ in the above notation.

The space $\Lambda^k_7 W^*$ for $k = 2,4,6$ 
is explicitly given as follows. 
For the explicit descriptions of the other irreducible summands, 
see for example \cite[(4.7)]{KLS}. 

\begin{lemma} \label{lem:lambdas}
Let $e^0 \in W^*$ be a unit vector. 
Set $V^* = (\R e^0)^\perp$, the orthogonal complement of $\R e^0$. 
The group $\Sp$ acts irreducibly on $V^*$ as the double cover of ${\rm SO}(7)$, 
and hence, we have the identification $V^* \cong W_7$. 
Then, the following maps are $\Sp$-equivariant isometries. 
\begin{equation} \label{def:lambda}
\lambda^k: V^* \longrightarrow \Lambda^k_7 W^*, \quad 
\begin{array}{ll} 
\lambda^2(\alpha) := \frac{1}{2} \left( e^0 \wedge \alpha + i (\alpha^\sharp) \varphi \right), \\[2mm]
\lambda^4(\alpha) := 
\frac{1}{\sqrt{8}} \left( e^0 \wedge i (\alpha^\sharp) \ast_7 \varphi - \alpha \wedge \varphi \right), \\[2mm]
\lambda^6(\alpha) := \frac{1}{3} \Phi \wedge \lambda^2(\alpha) = \ast_8 \lambda^2(\alpha). 
\end{array}
\end{equation}
Here, $\ast_8$ and $\ast_7$ are the Hodge star operators on $W^*$ and $V^*$, respectively.
\end{lemma}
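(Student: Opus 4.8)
The statement to prove is Lemma~\ref{lem:lambdas}: that the maps $\lambda^k : V^* \to \Lambda^k_7 W^*$ for $k = 2, 4, 6$ are $\Sp$-equivariant isometries.

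The plan is to verify three properties for each map in turn: (i) that the image lands in $\Lambda^k_7 W^*$, (ii) that each map is $\Sp$-equivariant, and (iii) that each map is an isometry (up to the stated normalization constants). Since $\Sp$ acts irreducibly on $V^* \cong W_7$ and on $\Lambda^k_7 W^* \cong W_7$, equivariance is the easiest part and in fact largely forces the conclusion: a nonzero $\Sp$-equivariant map between irreducible modules isomorphic to $W_7$ is automatically a scalar multiple of an isometry by Schur's lemma. I would therefore structure the argument so that the normalization constants $\tfrac12$, $\tfrac{1}{\sqrt 8}$, $\tfrac13$ emerge from a single computation of the norm of $\lambda^k(\alpha)$ on one convenient test vector.

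First I would establish equivariance. The forms $\varphi$, $\ast_7\varphi$, $\Phi$, and $e^0$ are all built $\Sp$-invariantly once we fix the splitting $W^* = \R e^0 \oplus V^*$; the stabilizer in $\Sp$ of $e^0$ is $G_2$, which acts on $V^*$ preserving $\varphi$ and $\ast_7\varphi$. Since $\lambda^2(\alpha) = \tfrac12(e^0 \wedge \alpha + i(\alpha^\sharp)\varphi)$ and $\lambda^4(\alpha) = \tfrac{1}{\sqrt 8}(e^0 \wedge i(\alpha^\sharp)\ast_7\varphi - \alpha \wedge \varphi)$ are assembled from $G_2$-equivariant operations (wedge, interior product, and the musical isomorphism $\sharp$, which is an isometry), each $\lambda^k$ is $G_2$-equivariant, hence $\Sp$-equivariant after checking compatibility with the full group action. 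The formula $\lambda^6(\alpha) = \tfrac13 \Phi \wedge \lambda^2(\alpha) = \ast_8 \lambda^2(\alpha)$ then inherits equivariance from $\lambda^2$ since $\Phi$ is $\Sp$-invariant and $\ast_8$ is $\Sp$-equivariant; I would verify the identity $\tfrac13 \Phi \wedge \lambda^2(\alpha) = \ast_8\lambda^2(\alpha)$ directly, noting that $\ast_8$ maps $\Lambda^2_7$ to $\Lambda^6_7$ isometrically, which simultaneously handles claim (i) and the isometry claim for $k=6$.

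Next I would confirm that each image lies in the $7$-dimensional summand. The cleanest route is to use the characterizations available from the $G_2$ decomposition: I would compute $\Phi \wedge \lambda^2(\alpha)$ and verify the eigenvalue condition singling out $\Lambda^2_7$, using Lemma~\ref{lem:G2 identities} to handle the terms $\varphi \wedge i(\alpha^\sharp)\varphi$ and $\ast_7\varphi \wedge i(\alpha^\sharp)\varphi$. For $\lambda^4$, I would check self-duality or the appropriate projection condition defining $\Lambda^4_7 W^*$, again reducing everything through $\Phi = e^0 \wedge \varphi + \ast_7\varphi$ to $G_2$-identities on $V^*$. Finally, for the isometry normalization, it suffices by irreducibility and Schur to evaluate $|\lambda^k(e^1)|^2$ on a single basis vector: using the orthonormality of the $e^{i_1\cdots i_k}$ and the explicit expansions \eqref{varphi} and \eqref{varphi*}, one counts monomials to get $|e^0 \wedge e^1|^2 = 1$ and $|i(e_1)\varphi|^2 = 3$, yielding $|\lambda^2(e^1)|^2 = \tfrac14(1+3) = 1$, and similarly for $\lambda^4$ the constant $\tfrac1{\sqrt 8}$ balances $|i(e_1)\ast_7\varphi|^2 + |e^1 \wedge \varphi|^2$.

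The main obstacle is the bookkeeping in the $\lambda^4$ case: unlike $\lambda^2$, it mixes a $5$-form-like interior-product term $e^0 \wedge i(\alpha^\sharp)\ast_7\varphi$ with the $4$-form $\alpha \wedge \varphi$, and verifying membership in $\Lambda^4_7 W^*$ requires knowing the precise projection or self-duality condition for the $7$-summand inside the $4$-form decomposition $\Lambda^4 W^* \cong W_1 \oplus W_7 \oplus W_{27} \oplus W_{35}$. I expect this to demand a careful use of the relations in Lemma~\ref{lem:G2 identities}, particularly $\varphi \wedge i(u)\ast_7\varphi = -4 \ast_7 u^\flat$ and $\ast_7\varphi \wedge i(u)\varphi = 3\ast_7 u^\flat$, to show the cross-terms cancel and the result is genuinely $\Sp$-irreducible of type $W_7$; once membership is settled, the isometry constant $\tfrac1{\sqrt 8}$ follows from the single norm computation as above.
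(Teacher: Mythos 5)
Your plan has a genuine gap at the equivariance step, and it is exactly the step the paper does not prove either but outsources to a citation. You argue that $\lambda^2$ and $\lambda^4$ are ``assembled from $G_2$-equivariant operations\dots hence $\Sp$-equivariant after checking compatibility with the full group action''; that last clause defers precisely the nontrivial point. The subtlety is that $V^* = (\R e^0)^\perp$ is \emph{not} an $\Sp$-invariant subspace of $W^*$ (the stabilizer of $e^0$ in $\Sp$ is only $G_2$), so the $\Sp$-module structure on $V^*$ asserted in the lemma is the one coming from the covering homomorphism $\Sp \to {\rm SO}(7) = {\rm SO}(V^*)$, not the restriction of the action on $W^*$. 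Equivariance of $\lambda^k$ means intertwining \emph{that} action with the restricted $\Sp$-action on $\Lambda^k_7 W^*$, and $G_2$-invariance of the ingredients $e^0, \varphi, *_7\varphi$ gives you only $G_2$-equivariance. In the paper this entire package --- equivariance \emph{and} the fact that the image is $\Lambda^k_7 W^*$ --- is the citation to \cite[Lemma 4.2]{KLS}; the proof in the paper consists solely of the isometry computation. Relatedly, you correctly identify membership of $\lambda^4(\alpha)$ in $\Lambda^4_7 W^*$ as the main obstacle, but you never supply the criterion for the $W_7$-summand of $\Lambda^4 W^* \cong W_1 \oplus W_7 \oplus W_{27} \oplus W_{35}$, and none appears in this paper (it is deferred to \cite[(4.7)]{KLS}); so for $k=4$ your step (i) is a plan, not an argument.

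The remainder of your proposal is sound and close to what the paper actually does. For $k=2$ your route works: using Lemma \ref{lem:G2 identities} one checks $\Phi \wedge \lambda^2(\alpha) = 3 *_8 \lambda^2(\alpha)$, placing the image in $\Lambda^2_7 W^*$, and your observation that $\lambda^6 = *_8 \lambda^2$ settles $k=6$. Moreover, once membership is known, $G_2$-equivariance alone already suffices for the isometry claim: $\Lambda^k_7 W^*$ restricts to the irreducible $G_2$-module $V_7$, so Schur plus one test vector fixes the constant, and your counts $|i(e_1)\varphi|^2 = 3$ and $|i(e_1) *_7 \varphi|^2 = |e^1 \wedge \varphi|^2 = 4$ are correct. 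The paper bypasses Schur and computes for general $\alpha$: $|i(\alpha^\sharp)\varphi|^2 = 3|\alpha|^2$ from Lemma \ref{lem:G2 identities}, and $8|\lambda^4(\alpha)|^2 = 2|\alpha\wedge\varphi|^2$ with $|\alpha\wedge\varphi|^2 = |\alpha|^2 |\varphi|^2 - |i(\alpha^\sharp)\varphi|^2 = 4|\alpha|^2$ using $|\varphi|^2 = 7$ --- the same constants you obtain. So the normalization part of your proposal is fine; the gap is concentrated in the $\Sp$-equivariance argument and the unresolved $\Lambda^4_7$ membership, which you would need to either prove (e.g.\ via an explicit description of $\Lambda^4_7$) or cite as the paper does.
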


\begin{proof}
The maps above are $\Sp$-equivariant isomorphism by \cite[Lemma 4.2]{KLS}. 
We show that these are isometries. 
For $\alpha \in V^*$, we compute 
\[
4 | \lambda^2 (\alpha) |^2
= 
\la e^0 \wedge \alpha + i (\alpha^\sharp) \varphi, 
e^0 \wedge \alpha + i (\alpha^\sharp) \varphi \ra
= 
|\alpha|^2 + | i (\alpha^\sharp) \varphi |^2. 
\]
By Lemma \ref{lem:G2 identities}, we have 
\begin{equation} \label{eq:G2 norm}
| i (\alpha^\sharp) \varphi |^2 
=
*_7 \left(
i (\alpha^\sharp) \varphi \wedge *_7 (i (\alpha^\sharp) \varphi) \right)
=
*_7 \left(
i (\alpha^\sharp) \varphi \wedge \alpha \wedge *_7 \varphi \right)
=
3 |\alpha|^2. 
\end{equation}
Thus, we see that $\lambda^2$ is an isometry. 
By the definition of $\lambda^6$, this is also an isometry. 
We also compute 
\[
\begin{aligned}
8 | \lambda^4 (\alpha) |^2
&= 
\left \la e^0 \wedge i (\alpha^\sharp) \ast_7 \varphi - \alpha \wedge \varphi, 
e^0 \wedge i (\alpha^\sharp) \ast_7 \varphi - \alpha \wedge \varphi  \right \ra \\
&= 
| i (\alpha^\sharp) \ast_7 \varphi |^2 +  |\alpha \wedge \varphi|^2 \\
&= 2 |\alpha \wedge \varphi|^2. 
\end{aligned}
\]
The last term is computed as 
\[
\begin{aligned}
|\alpha \wedge \varphi|^2 
=&
\la \varphi, i(\alpha^\sharp) (\alpha \wedge \varphi) \ra\\
=&
\la \varphi, |\alpha|^2 \varphi - \alpha \wedge i(\alpha^\sharp) \varphi \ra
=
|\alpha|^2 |\varphi|^2 - |i(\alpha^\sharp) \varphi|^2
=
4 |\alpha|^2, 
\end{aligned}
\]
where we use $|\varphi|^2 = 7$ and \eqref{eq:G2 norm}. 
Hence, we see that $\lambda^4$ is an isometry. 
\end{proof}

We give a relation between $*_8$ and $*_7$, which is useful in Section \ref{sec:dDT Spin7}. 

\begin{lemma}
For $\alpha \in \Lambda^k V^*$, we have 
\begin{equation}\label{eq:Hodge 78}
\begin{aligned} 
*_8 \alpha = (-1)^k e^0 \wedge *_7 \alpha, \quad 
*_7 \alpha = *_8 (e^0 \wedge \alpha). 
\end{aligned}
\end{equation}
\end{lemma}

\begin{proof}
Denote by ${\rm vol}_7$ the volume form on $V^*$. 
The volume form on $W^*$ is given by $e^0 \wedge {\rm vol}_7$. 
Then, for any $\beta \in \Lambda^k V^*$, we have 
\[
\beta \wedge *_8 \alpha 
= \la \beta, \alpha \ra e^0 \wedge {\rm vol}_7
=e^0 \wedge \beta \wedge *_7 \alpha
= (-1)^k \beta \wedge e^0 \wedge *_7 \alpha, 
\]
which implies the first equation. 
The second equation follows from the first. 
\end{proof}

We give some formulas about projections onto  some irreducible summands. 
Denote by 
\begin{align} \label{eq:proj}
\pi^k_\l : \Lambda^k W^* \rightarrow \Lambda^k_\l W^*
\end{align}
the canonical projection. 
When $k=2, 4, 6$ and $\l=7$, 
Lemma \ref{lem:lambdas} implies that 
\begin{equation} \label{eq:Spin7 proj 1}
\pi^k_\l (\alpha^k) = \sum_{\mu=1}^7 \la \alpha^k, \lambda^k (e^\mu) \ra \cdot \lambda^k (e^\mu)
\end{equation}
for $\alpha^k \in \Lambda^k W^*$, 
where $\{\, e^\mu\,\}_{\mu=1}^7$ is an orthonormal basis of $V^*$. 

We give other descriptions of $\pi^k_\l$ for $k=2,6$. 
Since the map 
$\Lambda^2 W^* \ni \alpha^2 \mapsto *_8 (\Phi \wedge \alpha^2) \in \Lambda^2 W^*$ 
is $\Sp$-equivariant, the simple computation and Schur's lemma give the following: 
\[
\begin{aligned}
\Lambda^2_7 W^* 
=& \{\, \alpha^2 \in \Lambda^2 W^* \mid \Phi \wedge \alpha^2 = 3 *_8 \alpha^2 \,\}, \\
\Lambda^2_{21} W^*
=& 
\{\, \alpha^2 \in \Lambda^2 W^* \mid \Phi \wedge \alpha^2 = - *_8 \alpha^2 \,\}.  
\end{aligned}
\]
Since 
$\alpha^2 = \pi^2_7 (\alpha^2) + \pi^2_{21} (\alpha^2)$ and 
$*_8 (\Phi \wedge \alpha^2) = 3 \pi^2_7 (\alpha^2) - \pi^2_{21} (\alpha^2)$
for a 2-form $\alpha^2 \in \Lambda^2 W^*$, it follows that 
\begin{equation} \label{eq:Spin7 proj 2}
\pi^2_7 (\alpha^2) = \frac{\alpha^2 + *_8 (\Phi \wedge \alpha^2)}{4}, \quad
\pi^2_{21} (\alpha^2) = \frac{3 \alpha^2 - *_8 (\Phi \wedge \alpha^2)}{4}. 
\end{equation}
Since 
$\ast_8: \Lambda^6_\l W^* \to \Lambda^2_\l W^*$ is an isomorphism, 
we also obtain for a 6-form $\alpha^6 \in \Lambda^6 W^*$
\begin{equation} \label{eq:Spin7 proj 3}
\pi^6_7 (\alpha^6) = \frac{\alpha^6 + \Phi \wedge *_8 \alpha^6}{4}, \quad
\pi^6_{21} (\alpha^6) = \frac{3 \alpha^6 - \Phi \wedge *_8 \alpha^6}{4}. 
\end{equation}

\subsection{Cayley submanifolds}
The 4-form $\Phi$ given by \eqref{Phi4} is known to be a calibration. 
The corresponding calibrated submanifold is called a \emph{Cayley submanifold}. 
We give a characterization of Cayley submanifolds, 
which is equivalent to that of \cite{HL, McLean} by Lemma \ref{lem:lambdas}. 

Define a $\Sp$-equivariant map 
$\tau: \Lambda^4 W \rightarrow \Lambda^4_7 W^*$ by 
\begin{equation} \label{eq:def tau}
\tau (u_0, u_1, u_2, u_3) = \pi^4_7 (u_0^\flat \wedge u_1^\flat \wedge u_2^\flat \wedge u_3^\flat). 
\end{equation}
If $\{\, e^\mu \,\}_{\mu=1}^7$ is an oriented orthonormal basis of $V^*$, 
\eqref{eq:Spin7 proj 1} implies that 
\[
\tau = \sum_{\mu=1}^7 \lambda^4(e^\mu) \otimes \lambda^4(e^\mu). 
\]

\begin{lemma} \label{lem:Cayley id}
For any $u_0, u_1, u_2, u_3 \in W$, we have
\[
|\Phi (u_0, u_1, u_2, u_3)|^2 + 8 |\tau (u_0, u_1, u_2, u_3)|^2 = |u_0 \wedge u_1 \wedge u_2 \wedge u_3 |^2. 
\]
\end{lemma}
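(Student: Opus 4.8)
The plan is to reduce the Cayley identity to the classical $G_2$ associator equality, by using $\Sp$-invariance to align the distinguished vector $e_0$ of the $\Sp$-structure with one of the $u_i$.

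First I would note that all three terms depend only on the simple $4$-vector $\xi := u_0 \wedge u_1 \wedge u_2 \wedge u_3 \in \Lambda^4 W$ and are homogeneous of degree $2$ in it: indeed $\Phi(u_0,u_1,u_2,u_3) = \la \Phi, \xi^\flat \ra$ and $\tau(u_0,u_1,u_2,u_3) = \pi^4_7(\xi^\flat)$ by \eqref{eq:def tau} are linear in $\xi$, while $|u_0 \wedge u_1 \wedge u_2 \wedge u_3|^2 = |\xi|^2$ is quadratic. If the $u_i$ are linearly dependent then $\xi = 0$ and both sides vanish; otherwise Gram--Schmidt replaces them by an orthonormal frame and rescales $\xi$ by a positive constant, so it suffices to treat orthonormal $u_0,u_1,u_2,u_3$ and prove $\la \Phi, \xi^\flat\ra^2 + 8\,|\pi^4_7(\xi^\flat)|^2 = 1$. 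Since $\Phi$ and the metric, and hence $\pi^4_7$, are $\Sp$-invariant and $\Sp$ acts transitively on $S^7 \subset W$, I may apply an element of $\Sp$ to arrange $u_0 = e_0$, the distinguished vector of \eqref{Phi4}. Then $u_1,u_2,u_3 \in V = (\R e_0)^\perp$ and $\Phi = e^0 \wedge \varphi + \ast_7 \varphi$.

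Next I would compute the two terms on the left in this adapted frame. Since $\ast_7\varphi \in \Lambda^4 V^*$ annihilates $e_0$, only the first summand of $\Phi$ contributes, giving $\Phi(e_0,u_1,u_2,u_3) = \varphi(u_1,u_2,u_3)$. For the $W_7$-part, write $\alpha := \xi^\flat = e^0 \wedge \beta$ with $\beta := u_1^\flat \wedge u_2^\flat \wedge u_3^\flat \in \Lambda^3 V^*$. By \eqref{eq:Spin7 proj 1} and the fact that $\{\lambda^4(e^\mu)\}_{\mu=1}^7$ is orthonormal (Lemma \ref{lem:lambdas}), one has $|\pi^4_7\alpha|^2 = \sum_{\mu=1}^7 \la\alpha,\lambda^4(e^\mu)\ra^2$. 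Inserting the explicit formula for $\lambda^4$, the summand $e^\mu \wedge \varphi \in \Lambda^4 V^*$ is orthogonal to $\alpha = e^0\wedge\beta$ because it carries no $e^0$-factor, so only $e^0 \wedge i((e^\mu)^\sharp)\ast_7\varphi$ survives, and
\[
\la\alpha,\lambda^4(e^\mu)\ra = \tfrac{1}{\sqrt 8}\,\la\beta,\, i((e^\mu)^\sharp)\ast_7\varphi\ra = \tfrac{1}{\sqrt 8}\,\ast_7\varphi\big((e^\mu)^\sharp,u_1,u_2,u_3\big).
\]
Summing the squares over the orthonormal basis $\{(e^\mu)^\sharp\}_{\mu=1}^7$ of $V$, the factor $\tfrac18$ cancels the prefactor $8$ and yields $8\,|\tau|^2 = |\ast_7\varphi(u_1,u_2,u_3,\cdot)|^2$, the squared norm of the $1$-form on $V$ whose vanishing characterises associative planes (Lemma \ref{lem:asso coasso char}).

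Combining the two computations, the claim becomes the classical $G_2$ associator equality on $V$,
\[
\varphi(u_1,u_2,u_3)^2 + |\ast_7\varphi(u_1,u_2,u_3,\cdot)|^2 = |u_1\wedge u_2\wedge u_3|^2,
\]
whose right-hand side equals $|u_0\wedge u_1\wedge u_2\wedge u_3|^2$ since $e_0 \perp V$. This $G_2$ equality is standard (Harvey--Lawson) and can be proved by an entirely analogous reduction, now using that $G_2$ acts transitively on $S^6 \subset V$, or by direct computation. I expect the main obstacle to be the bookkeeping in the middle step: checking that the cross terms vanish and that the constants conspire precisely, so that the prefactor $8$ in the statement is exactly what converts the $\Sp$ identity into the $G_2$ identity with coefficient $1$. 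Once that computation is carried out carefully, the reduction to the associator equality closes the argument.
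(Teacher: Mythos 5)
Your proof is correct, but it takes a genuinely different route from the paper's. The paper normalizes much further: after reducing to an orthonormal frame, it uses transitivity of $\Sp$ on ${\rm Gr}_3(W)$ to set $u_0=e_0$, $u_1=e_1$, $u_2=e_2$, then uses the ${\rm SU}(2)$ stabilizer of this $3$-plane to put $u_3 = k e_3 + \l e_4$ with $k^2+\l^2=1$, and finally verifies the identity by brute force from \eqref{Phi4} and \eqref{varphi*}, obtaining $|\Phi|^2 = k^2$, $8|\tau|^2 = \l^2$ against $|u_0\wedge u_1\wedge u_2\wedge u_3|^2 = k^2+\l^2$; this makes the lemma entirely self-contained. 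You instead normalize only $u_0=e_0$ (transitivity on $S^7$ suffices), exploit the splitting $\Phi = e^0\wedge\varphi + \ast_7\varphi$ and the orthogonal decomposition $\Lambda^4 W^* = \left(e^0\wedge\Lambda^3 V^*\right)\oplus\Lambda^4 V^*$ to kill the cross terms in $\la e^0\wedge\beta, \lambda^4(e^\mu)\ra$, and reduce everything to the classical $G_2$ associator equality of Harvey--Lawson. All the intermediate steps check out: the quadratic dependence on $\xi=u_0\wedge u_1\wedge u_2\wedge u_3$ justifies the orthonormal reduction, the orthonormality of $\{\lambda^4(e^\mu)\}$ comes from Lemma \ref{lem:lambdas}, and the constant bookkeeping ($\tfrac18$ from $\lambda^4$ cancelling the prefactor $8$) is exactly right. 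What your approach buys is conceptual clarity --- it exhibits the Cayley equality as the dimensional lift of the associator equality, in the same spirit as the paper's own reduction of ${\rm Spin}(7)$-statements to $G_2$-computations in Sections \ref{sec:dDT Spin7} and \ref{sec:dDT G2 2}, and indeed your middle computation is essentially a special case ($F_2=0$, $|F_1|=$ arbitrary direction) of the expansion of $\sqrt{8}\,\lambda^4(\alpha)(v_0,v_1,v_2,v_3)$ in the proof of Theorem \ref{thm:FM Spin7}. The trade-off is that you import the associator equality as an external input, whereas the paper's normal-form computation needs no input beyond the definition of $\Phi$; since the paper itself invokes that same Harvey--Lawson identity in \eqref{eq:associator HL}, this reliance is unobjectionable.
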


\begin{proof}
We only have to show the equation when $\{\, u_0, u_1, u_2, u_3 \,\}$ is orthonormal. 
Since the both sides are $\Sp$-invariant and 
$\Sp$ acts transitively on ${\rm Gr}_3 (W)$, the Grassmannian of 3-planes in $W$, 
we may assume that $u_0 =e_0, u_1 = e_1$ and $u_2 = e_2$. 
Since the stabilizer at $\rmspan \{\, e_0, e_1, e_2 \,\}$ in $\Sp$ 
is the group ${\rm SU}(2)$ acting on the plane 
$\rmspan \{\, e_3, e_4, e_5, e_6, e_7 \,\} \cong \R \oplus \C^2$, 
we may assume that $u_3 = k e_3 + \l e_4$, where $k^2+\l^2 =1$. 
Then, \eqref{Phi4} implies that 
\[
|\Phi (u_0, u_1, u_2, u_3)|^2 = k^2. 
\]
By \eqref{def:lambda} and \eqref{varphi*}, we have 
\[
\begin{aligned}
\sqrt {8} \lambda^4 (e^\mu) (u_0, u_1, u_2, u_3) 
=&
*_7 \varphi (e_\mu, e_1, e_2, k e_3 + \l e_4)\\
=&
- *_7 \varphi (e_1, e_2, k e_3 + \l e_4, e_\mu)\\
=&
(e^{56} + e^{47}) (k e_3 + \l e_4, e_\mu)
=
\l \delta_{\mu 7}. 
\end{aligned}
\]
Then, we have 
\[
8 |\tau (u_0, u_1, u_2, u_3)|^2 
= 
8 \sum_{\mu=1}^7 |\lambda^4 (e^\mu) (u_0, u_1, u_2, u_3)|^2
= \l^2. 
\]
Since $|u_0 \wedge u_1 \wedge u_2 \wedge u_3 |^2 = k^2 + \l^2$, 
the proof is completed. 
\end{proof}

Lemma \ref{lem:Cayley id} immediately implies the following. 

\begin{lemma} \label{lem:Cayley char}
An oriented 4-dimensional submanifold $C \subset W$ is Cayley 
with an appropriate orientation if and only if 
the restriction of $\tau$ to $C$ vanishes. 
\end{lemma}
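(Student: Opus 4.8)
The plan is to deduce Lemma \ref{lem:Cayley char} directly from the pointwise identity of Lemma \ref{lem:Cayley id}. The key observation is that Cayley calibration is, by the Harvey--Lawson theory, a pointwise condition on the tangent 4-plane: a 4-dimensional submanifold $C$ is Cayley (with an appropriate orientation) if and only if at every point $p \in C$, the calibration $\Phi$ restricts to the volume form of the tangent plane $T_p C$, i.e.\ $\Phi$ achieves its maximal value on the oriented unit-volume tangent multivector. So I would first reformulate the statement entirely in terms of a single oriented 4-plane $P \subset W$ with oriented orthonormal basis $\{u_0,u_1,u_2,u_3\}$, and show that $P$ is a Cayley plane (for one of the two orientations) if and only if $\tau|_P = 0$.

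The main step is then purely algebraic. By Lemma \ref{lem:Cayley id}, for an orthonormal frame we have
\[
|\Phi(u_0,u_1,u_2,u_3)|^2 + 8\,|\tau(u_0,u_1,u_2,u_3)|^2 = |u_0 \wedge u_1 \wedge u_2 \wedge u_3|^2 = 1,
\]
so $|\Phi(u_0,u_1,u_2,u_3)| \le 1$ with equality precisely when $\tau(u_0,u_1,u_2,u_3) = 0$. The Cayley calibration inequality (Harvey--Lawson, \cite{HL}) states that $|\Phi(u_0,u_1,u_2,u_3)| \le 1$ on every oriented orthonormal 4-frame, with equality exactly on Cayley planes. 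Thus the condition $\tau|_P = 0$ is equivalent to $|\Phi| = 1$ on $P$, which is equivalent to $P$ being a Cayley plane for the orientation making $\Phi(u_0,u_1,u_2,u_3) = +1$ (if instead $\Phi = -1$, the reversed orientation is Cayley). This is where the phrase ``with an appropriate orientation'' is accounted for: since $\tau$ is built from $\pi^4_7$ and is insensitive to the sign of $\Phi$, vanishing of $\tau$ selects precisely those planes that are Cayley up to orientation.

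I would then globalize: the restriction of the tensor-valued form $\tau$ to $C$ vanishes if and only if $\tau(T_p C) = 0$ for every $p$, where $T_p C$ is spanned by any (equivalently, every) basis, since $\tau$ is a form and its restriction is determined by its values on tangent vectors. Combining this with the pointwise equivalence above gives that $\tau|_C = 0$ at every point if and only if each tangent plane is Cayley up to orientation. The final point is to promote this pointwise-orientation statement to a global one: by continuity of the tangent plane distribution and connectedness (working componentwise), the sign of $\Phi(u_0,u_1,u_2,u_3)$ for a continuously chosen oriented frame cannot jump, so a single coherent orientation makes $C$ globally Cayley.

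The step I expect to be the main obstacle is not any computation—Lemma \ref{lem:Cayley id} already does the heavy lifting—but rather cleanly invoking the Cayley calibration inequality and handling the orientation bookkeeping. One must be careful that $\tau$ vanishing only detects the Cayley condition \emph{up to} a choice of orientation, and that promoting the pointwise freedom of orientation to a single global orientation requires the standard connectedness/continuity argument. Since the excerpt states this as an immediate consequence (``Lemma \ref{lem:Cayley id} immediately implies the following''), the intended proof is short: the real content is entirely in the pointwise equivalence $\tau|_P = 0 \iff |\Phi|_P| = 1 \iff P \text{ Cayley (some orientation)}$, with the calibration inequality supplying the final link.
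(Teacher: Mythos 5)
Your proposal is correct and matches the paper's intent exactly: the paper offers no separate argument, stating only that Lemma \ref{lem:Cayley id} ``immediately implies'' the characterization, and your write-up simply fills in the standard details---the pointwise equivalence $\tau|_{T_pC}=0 \iff |\Phi|_{T_pC}|=1$ via the identity (which itself yields the comass bound $|\Phi|\le 1$, so invoking the Harvey--Lawson inequality separately is redundant), plus the routine connectedness argument fixing the orientation. Nothing further is needed.
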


\section{The real Fourier--Mukai transform for coassociative $T^4$-fibrations} \label{sec:dDT G2}
In this section, we compute the real Fourier--Mukai transform of associative cycles. 
This makes us confirm the definition of 
deformed Donaldson--Thomas connections 
for a manifold with a $G_2$-structure 
introduced by Lee and Leung \cite{LL}. 
This is also useful in the computation of Section \ref{sec:dDT Spin7}.

Let $B\subset\mathbb{R}^3$ be an open set with coordinates $(x^{1},x^{2},x^{3})$ and 
$f=(f^{4},f^{5},f^{6},f^{7}):B\to T^{4}$ be a smooth function with values in $T^{4}$. 
We use coordinates $(y^{4},y^{5},y^{6},y^{7})$ for $T^{4}$. 
Put
\[S:=\{\,(x,f(x))\mid x\in B\,\}\]
the graph of $f$, a 3-dimensional submanifold in $X:=B\times T^{4}$. 
Set 
\[
\om_1=dy^{45} + dy^{67}, \quad \om_2=dy^{46} + dy^{75}, \quad \om_3=-(dy^{47} + dy^{56}). 
\] 
By \eqref{varphi} and \eqref{varphi*}, 
the standard $G_2$-structure $\varphi$ on $X$ and its Hodge dual $\ast \varphi$ are described as 
\begin{align}
\varphi &= dx^{123} + \sum_{i=1}^3 dx^i \wedge \om_i, \label{eq:FM asso} \\
\ast \varphi &= dy^{4567} + \sum_{k \in \Z/3} dx^{k,k+1} \wedge \om_{k+2}.  \label{eq:FM coasso}
\end{align}
Let 
\[
\n^B = d + \i \sum_{j=1}^3 A^j dx^j
\]
be a Hermitian connection of a trivial complex line bundle $B \times \C \to B$, 
where 
$A^j : B \to \R$ is a smooth function.

Next, we consider the mirror side. 
The real Fourier--Mukai transform of $(S, \n^B)$ is the connection on $X^{*}(\cong X)$ defined by 
\[
\nabla:=d+ \i \sum_{j=1}^3 A^{j} dx^j + \i \sum_{a=4}^7 f^{a} dy^{a}.
\]
Then, its curvature 2-form $F_\nabla$ is given by $F_\nabla = F_\n^B + F_\n^S$, where 
\begin{equation} \label{eq:FM curv}
F_\n^B = \i \sum_{i, j=1}^3 \frac{\partial A^{j}}{\partial x^{i}} dx^i \wedge dx^j, \quad 
F_\n^S = \i \sum_{i=1}^3 \sum_{a=4}^7 \frac{\partial f^{a}}{\partial x^{i}} dx^i \wedge dy^a. 
\end{equation}

We first describe the condition for $S$ to be associative in terms of $F_\n^S$ in Proposition \ref{prop:FM G2}. 
Using this, we show that the similar statement also holds for $F_\n$ in Proposition \ref{prop:FM G2 conn}. 

\begin{proposition} \label{prop:FM G2}
The following conditions are equivalent. 
\begin{enumerate}
\item
The graph $S$ is an associative submanifold with an appropriate orientation. 
\item
$(F_\n^S)^3/6 + F_\n^S \wedge \ast\varphi=0$. 
\item
$(F_\n^S)^3/6 + F_\n^S \wedge \ast\varphi=0
\quad\mbox{and}\quad \varphi\wedge \ast (F_\n^S)^{2}=0.$
\end{enumerate}
\end{proposition}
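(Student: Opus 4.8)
The plan is to translate the geometric characterization of associativity in Lemma~\ref{lem:asso coasso char} into the language of the Jacobian of $f$, and then to match it block by block with the two curvature equations. At a point $(x,f(x))$ the tangent space $TS$ is spanned by
\[
V_i = \frac{\partial}{\partial x^i} + \sum_{a=4}^{7} \frac{\partial f^a}{\partial x^i}\,\frac{\partial}{\partial y^a}, \qquad i=1,2,3,
\]
so by Lemma~\ref{lem:asso coasso char} the graph $S$ is associative with an appropriate orientation if and only if the $1$-form $\eta := \ast\varphi(V_1,V_2,V_3,\,\cdot\,)$ vanishes identically. First I would expand $\eta = \sum_{i=1}^3 \eta_i\,dx^i + \sum_{a=4}^7 \eta'_a\,dy^a$ using the explicit formula \eqref{eq:FM coasso} for $\ast\varphi$, the aim being to show that the $dy$-block $\{\eta'_a\}$ and the $dx$-block $\{\eta_i\}$ correspond exactly to the two equations appearing in statement (3).

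The conceptual heart of the argument is a tangency relation that makes the $dx$-block redundant. Since $\eta$ is obtained by contracting $\ast\varphi$ against $V_1,V_2,V_3$, inserting any $V_i$ again gives $\eta(V_i) = \ast\varphi(V_1,V_2,V_3,V_i) = 0$. Written out, $\eta(V_i) = \eta_i + \sum_{a} \frac{\partial f^a}{\partial x^i}\,\eta'_a$, so that
\[
\eta_i = -\sum_{a=4}^7 \frac{\partial f^a}{\partial x^i}\,\eta'_a .
\]
Hence $\eta = 0$ is equivalent to $\eta'_a = 0$ for all $a$: the $dy$-block already forces the $dx$-block. This is precisely the mechanism that will yield the equivalence of (2) and (3) — the point being that the single equation in (2) secretly contains the information of the apparently stronger pair in (3).

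It then remains to identify the two blocks with the stated curvature expressions. Using $\ast\varphi = dy^{4567} + \sum_{k} dx^{k,k+1}\wedge\om_{k+2}$, the $dy^{4567}$-summand contributes to $\eta'_a$ the $3\times3$ minors of the Jacobian, which I expect to match $\tfrac16 (F_\n^S)^3$ once one accounts for the factor $3!$ hidden in the triple wedge and the powers of $\i$; the $\om$-summands contribute the terms linear in the Jacobian, matching $F_\n^S\wedge\ast\varphi$. A direct Hodge-dual computation should then yield
\[
\ast\Big(\tfrac16 (F_\n^S)^3 + F_\n^S\wedge\ast\varphi\Big) = c\sum_a \eta'_a\,dy^a, \qquad
\ast\big(\varphi\wedge \ast(F_\n^S)^2\big) = c'\sum_i \eta_i\,dx^i
\]
for nonzero constants $c,c'$, so that (2) $\Leftrightarrow \{\eta'_a=0\}$ and the extra equation in (3) $\Leftrightarrow \{\eta_i=0\}$. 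Granting this, the proof closes cleanly: (1) $\Leftrightarrow \eta=0 \Leftrightarrow \{\eta'_a=0\} \Leftrightarrow$ (2); and for (2) $\Leftrightarrow$ (3), the implication (3) $\Rightarrow$ (2) is trivial, while (2) $\Rightarrow$ (3) follows from the tangency relation, which turns $\eta'_a=0$ into $\eta_i=0$.

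I expect the main obstacle to be purely computational: correctly tracking the signs, the normalizing constants $c,c'$, and the factors of $\i$ and $\tfrac16$ through the wedge and Hodge-star bookkeeping; the geometry (tangency forcing the redundancy) is the easy part. A useful consistency check during this step is that $\sum_a \frac{\partial f^a}{\partial x^i}\cdot(\text{cubic part of }\eta'_a)$ vanishes, since it is the Laplace expansion of a $4\times4$ determinant with a repeated column — this is exactly what guarantees $\eta_i$ is only quadratic in the Jacobian (matching $\varphi\wedge \ast(F_\n^S)^2$) rather than quartic, and so confirms that the two displayed identifications are mutually consistent.
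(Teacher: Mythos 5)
Your proposal is correct, and its key mechanism differs from the paper's proof in one genuine way. For the equivalence of (1) and (3) you and the paper do essentially the same thing: you expand the contraction of $\ast\varphi$ against the tangent frame $V_1,V_2,V_3$ and sort the result into a $dy$-block (linear plus cubic in the Jacobian, matching $(F_\n^S)^3/6 + F_\n^S\wedge\ast\varphi$ after the factors of $\i$ flip the sign correctly) and a $dx$-block (quadratic, matching $\varphi\wedge\ast(F_\n^S)^2$); the paper phrases this via the $6$-form $v_1^\flat\wedge v_2^\flat\wedge v_3^\flat\wedge\varphi = I_1+I_2+I_3$ rather than your $1$-form $\eta$, but the two are Hodge duals of each other, so this is the same decomposition. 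Where you diverge is the step $(2)\Rightarrow(3)$: the paper simply cites \cite[Remark 3.3]{KY}, where the implication is proved for \emph{arbitrary} $2$-forms, whereas you derive it from the alternation identity $\eta(V_i)=\ast\varphi(V_1,V_2,V_3,V_i)=0$, which gives $\eta_i=-\sum_a\frac{\partial f^a}{\partial x^i}\eta'_a$ and so makes the $dx$-block redundant. This is precisely the original Harvey--Lawson mechanism that the remark after Proposition \ref{prop:FM G2} describes (``using the assumption that $S$ is a graph, they showed that (2)$'$ implies (1)''). Your route is self-contained and elementary, but it only works for curvatures of the graphical form \eqref{eq:FM curv}; the paper's citation buys the equivalence of (2) and (3) for general Hermitian connections, which is what justifies dropping the second equation in the eventual dDT definition beyond the fibered setting. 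Two small caveats: your displayed identities should have componentwise signs rather than single constants $c,c'$ (harmless for the vanishing statements), and the block-identification computation you defer as ``expected'' is in fact the bulk of the paper's proof and is reused verbatim in the ${\rm Spin}(7)$ section, so it cannot be skipped in a complete write-up --- though your consistency check (the quartic contribution to $\eta_i$ cancelling as $dy^{4567}(V_1,V_2,V_3,V_i)=0$, a determinant with a repeated column) is correct and is exactly the reason $I_2$ is only quadratic.
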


\begin{remark}
A similar statement for graphical submanifolds 
is given by Harvey and Lawson in \cite[Chapter I\hspace{-.1em}V,Theorem 2.4]{HL}. 
In terms of differential equations for $f$, 
they obtained two equations (2)' and (3)', which correspond to (2) and (3), respectively. 
Then, they stated that (1) and (2)' are equivalent in the theorem, 
and (3)' appeared only in the proof. 
They first showed that (1) and (3)' are equivalent. 
Using the assumption that $S$ is a graph, 
they showed that (2)' implies (1). 
Since (3)' obviously implies (2)', 
they obtained the equivalence. 
Actually, (2) and (3) are equivalent in general. 
See \cite[Remark 3.3]{KY}.

We can also consider the real Fourier--Mukai transform 
of a coassociative graph in associative $T^3$-fibrations. 
In Proposition \ref{prop:FM G2 2}, we show that we obtain the same equations as stated in \cite{LL}. 
\end{remark}

\begin{proof}
Since $(3) \Rightarrow (2)$ is obvious and the converse holds by \cite[Remark 3.3]{KY}, 
(2) and (3) are equivalent. 
We show the equivalence of (1) and (3). 
By Lemma \ref{lem:asso coasso char},  
$S$ is associative with an appropriate orientation if and only if  
$\ast \varphi (v_{1},v_{2},v_{3}, \,\cdot\,)=0$ for any $p\in S$ and $v_{1},v_{2},v_{3}\in T_{p}S$.
Set $\partial_{i}:=\partial/\partial x^i$ and $\partial_a:=\partial/\partial y^a$ 
for $1 \leq i \leq 3$ and $4 \leq a \leq 7$. 
Then, the tangent space of $S$ is spanned by $v_1,v_2,v_3$, where 
\[
v_{j}:=\p_j + \sum_{a=4}^7 \frac{\partial f^{a}}{\partial x^{j}} \p_a. 
\]
By \eqref{eq:FM curv}, note that 
\[
v_j^\flat = dx^j + i(\p_j) F
\]
where we set $F=- \i F^S_\nabla$. 
Since $ \ast \varphi (v_{1},v_{2},v_{3}, \,\cdot\,)=0$ is equivalent to 
$v_1^\flat \wedge v_2^\flat \wedge v_3^\flat \wedge \varphi =0$, we have 
\begin{equation}\label{123phi=0}
0 = \left( dx^1 + i (\p_1) F \right) \wedge \left( dx^2 + i (\p_2) F \right) 
\wedge \left( dx^3 + i (\p_3) F \right) \wedge \varphi. 
\end{equation}
Since $dx^{123} \wedge \varphi =0$, this is equivalent to 
\[
0 = I_1 + I_2 + I_3, 
\]
where 
\[
\begin{aligned}
I_1 &= \sum_{k \in \Z/3} dx^{k, k+1} \wedge i (\p_{k+2}) F \wedge \varphi, \\
I_2 &= \sum_{k \in \Z/3} dx^k \wedge i (\p_{k+1}) F \wedge i (\p_{k+2}) F \wedge \varphi, \\
I_3 &= i (\p_{1}) F \wedge i (\p_{2}) F \wedge i (\p_{3}) F \wedge \varphi. 
\end{aligned}
\]
Since $I_1$ and $I_3$ are linear combinations of $dx^{123} \wedge dy^{abc}$'s  
and $I_2$ is a linear combination of $dx^{i j} \wedge dy^{4567}$'s, 
$S$ is associative with an appropriate orientation if and only if 
\begin{equation} \label{eq:FM asso cond}
I_1 + I_3 =0, \quad I_2 =0. 
\end{equation}
Now, we compute $I_1, I_2$ and $I_3$. 
By \eqref{eq:FM asso}, we have 
\[
I_1 
= \sum_{k \in \Z/3} dx^{k, k+1} \wedge i (\p_{k+2}) F \wedge (dx^{k+2} \wedge \om_{k+2})
= -dx^{123} \wedge \sum_{k=1}^3 \om_k \wedge i (\p_k) F. 
\]
Since $i (\p_k) F$ is the linear combination of $dy^a$'s and 
$dx^{123} \wedge  \om^k \wedge F =0$ by \eqref{eq:FM curv}, we see that 
\[
I_1 = -dx^{123} \wedge \sum_{k=1}^3 i (\p_k) \left( \om_k \wedge F \right)
=
-\sum_{k=1}^3 \left( i (\p_k) dx^{123} \right) \wedge \om_k \wedge F. 
\]
Then, by \eqref{eq:FM coasso}, we obtain 
\begin{equation} \label{eq:FM I1}
I_1 = - * \varphi \wedge F. 
\end{equation}

Next, we compute $I_3$. 
Since $i (\p_k) F$ is the linear combination of $dy^a$'s, we see that 
\[
I_3 
= - dx^{123} \wedge i (\p_{1}) F \wedge i (\p_{2}) F \wedge i (\p_{3}) F 
\]
and 
\[
\begin{aligned}
i (\p_{3}) i (\p_{2}) i (\p_{1}) \left( \frac{1}{6} F^3 \right)
=& 
i (\p_{3}) i (\p_{2}) \left( \frac{1}{2} i (\p_{1})F \wedge F^2 \right)\\
=&
i (\p_{3}) \left( - i (\p_{1})F \wedge  i (\p_{2}) F \wedge F \right)\\
=&
- i (\p_{1}) F \wedge i (\p_{2}) F \wedge i (\p_{3}) F. 
\end{aligned}
\]
By \eqref{eq:FM curv}, $F^3$ is the linear combination of $dx^{123} \wedge dy^{abc}$'s, 
and hence, we obtain 
\begin{equation} \label{eq:FM I3}
I_3 = \frac{1}{6} F^3. 
\end{equation}

Finally, we compute $I_2$. 
By \eqref{eq:FM asso}, we have 
\[
dx^k \wedge \varphi = dx^k \wedge (dx^{k+1} \wedge \om_{k+1} + dx^{k+2} \wedge \om_{k+2}). 
\]
Since $i (\p_k) F$ is the linear combination of $dy^a$'s, we see that 
\[
i (\p_j) i (\p_i) \left( \frac{1}{2} F^2 \right) = - i(\p_i) F \wedge i(\p_j) F. 
\]
Then, it follows that 
\[
\begin{aligned}
I_2 =& 
- \sum_{k \in \Z/3} \left( dx^{k, k+1} \wedge \om_{k+1} 
+ dx^{k, k+2} \wedge \om_{k+2} \right)
\wedge 
i (\p_{k+2}) i (\p_{k+1}) \left( \frac{1}{2} F^2 \right) \\
=&
- \frac{1}{2} (I_{2,1} + I_{2,2}), 
\end{aligned}
\]
where 
\[
\begin{aligned}
I_{2,1}
=&
\sum_{k \in \Z/3} dx^{k, k+1} \wedge 
\om_{k+1} \wedge 
i (\p_{k+2}) i (\p_{k+1}) F^2 \\
=& 
\sum_{k \in \Z/3} 
i (\p_{k+2}) 
\left(
dx^{k, k+1} \wedge 
\om_{k+1} \wedge 
i (\p_{k+1}) F^2 
\right)
\end{aligned}
\]
and 
\[
\begin{aligned}
I_{2,2}
=&
\sum_{k \in \Z/3} dx^{k+1, k} \wedge 
\om_{k} \wedge 
i (\p_{k}) i (\p_{k+2}) F^2 \\
=& 
\sum_{k \in \Z/3} i (\p_{k+2}) 
\left(
dx^{k, k+1} \wedge 
\om_{k} \wedge 
i (\p_{k}) F^2 
\right). 
\end{aligned}
\]
Since $dx^{k, k+1} \wedge \om_{k+1} \wedge F^2 = 0$, which is an 8-form,  
it follows that 
\[
\begin{aligned}
I_{2,1}
=&
\sum_{k \in \Z/3} 
i (\p_{k+2}) \left(
- i (\p_{k+1}) (dx^{k, k+1}) \wedge 
\om_{k+1} \wedge F^2
\right)\\
=&
\sum_{k \in \Z/3} 
i (\p_{k+2}) \left(
dx^k \wedge \om_{k+1} \wedge F^2 
\right). 
\end{aligned}
\]
Similarly, we compute 
\[
\begin{aligned}
I_{2,2}
=& 
\sum_{k \in \Z/3} i (\p_{k+2}) 
\left(
- dx^{k+1} \wedge \om_{k} \wedge F^2 
\right). 
\end{aligned}
\]
Then, we obtain 
\[
\begin{aligned}
2 I_2 =& 
\sum_{k \in \Z/3} 
i (\p_{k+2}) 
\left( F^2  \wedge 
(- dx^{k} \wedge \om_{k+1} + dx^{k+1} \wedge \om_{k}) \right) \\
=& 
\sum_{k \in \Z/3} 
i (\p_{k}) 
\left( F^2 \wedge 
(- dx^{k+1} \wedge \om_{k+2} + dx^{k+2} \wedge \om_{k+1}) \right). 
\end{aligned}
\]
By \eqref{eq:FM coasso}, we have
$i(\p_k) * \varphi = dx^{k+1} \wedge \om_{k+2} - dx^{k+2} \wedge \om_{k+1}$, 
and hence, 
\[
2 I_2 = 
- \sum_{k=1}^3 
i (\p_{k}) 
\left( F^2 \wedge i(\p_k) * \varphi \right). 
\]
Since 
\[
\begin{aligned}
- F^2 \wedge i(\p_k) * \varphi 
&= - \la F^2, * (i(\p_k) * \varphi) \ra {\rm vol}\\
&= \la F^2, d x^k \wedge \varphi \ra {\rm vol}\\
&= dx^k \wedge \varphi \wedge * (F^2) 
=
\la \varphi \wedge * (F^2), * dx^k \ra {\rm vol}, 
\end{aligned}
\]
we see that 
$2 I_2 = \sum_{k=1}^3 \la \varphi \wedge * (F^2), * dx^k \ra * dx^k$. 
The equation \eqref{eq:FM curv} implies that 
$* (F^2)$ is the linear combination of $dx^i \wedge dy^{ab}$'s, and hence, 
$\varphi \wedge * (F^2)$ is the linear combination of $dx^{ij} \wedge dy^{4567}$'s. 
Then, we have 
$
\la \varphi \wedge * (F^2), * dy^a \ra = 0
$
for any $4 \leq a \leq 7$. Hence, we obtain 
\begin{equation} \label{eq:FM I2}
I_2 = \frac{1}{2} \varphi \wedge * (F^2). 
\end{equation}
Then, by \eqref{eq:FM asso cond}, \eqref{eq:FM I1}, \eqref{eq:FM I3} and \eqref{eq:FM I2}, 
the proof is completed. 
\end{proof}

Before going further, we rewrite the associator equality 
\cite[Chapter I\hspace{-.1em}V,Theorem 1.6]{HL}. 
This is very useful because 
Lemma \ref{lem:associator HL} implies an identity that will hold in more general settings. 
In \cite{KYvol}, we show that it indeed holds generally. 
Using this, we see that dDT connections for $G_2$-manifolds 
minimize a kind of the volume functional, which is called the Dirac-Born-Infeld (DBI) action in physics, 
and this gives further applications. For more details, see \cite{KYvol}.

\begin{lemma} \label{lem:associator HL}
We have 
\begin{align*}
&\left( 1+ \frac{1}{2} \la (F^S_\n)^2, * \varphi \ra \right)^2
+
\left| * \varphi \wedge F^S_\n + \frac{1}{6} (F^S_\n)^3 \right|^2
+
\frac{1}{4} |\varphi \wedge * (F^S_\n)^2|^2 \\
=&
\det \left( \id_{TX} + (-\i F^S_\n)^\sharp \right), 
\end{align*}
where 
$(-\i F^S_\n)^\sharp$ is a skew symmetric endomorphism of $TX$ defined by 
$$
\la (-\i F^S_\n)^\sharp u, v \ra = -\i F^S_\n (u,v) \qquad \mbox{for } u,v \in TX. 
$$
\end{lemma}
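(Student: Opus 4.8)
The plan is to recognise all three terms on the left--hand side as the real Fourier--Mukai expressions of the two elementary invariants attached to the tangent $3$-plane of the graph $S$, and then to reduce the whole statement to the pointwise associator equality. Keep the notation of the proof of Proposition~\ref{prop:FM G2}: put $F=-\i F^S_\n$, let
$v_j=\p_j+\sum_{a=4}^{7}(\p f^a/\p x^j)\,\p_a$
span the tangent space of $S$, so that $v_j^\flat=dx^j+i(\p_j)F$, and write $\xi=v_1\wedge v_2\wedge v_3$ and $\xi^\flat=v_1^\flat\wedge v_2^\flat\wedge v_3^\flat$. First I would establish the three identifications
\begin{align*}
1+\tfrac12\la(F^S_\n)^2,*\varphi\ra &= \varphi(v_1,v_2,v_3)=\la\xi^\flat,\varphi\ra,\\
\Big|*\varphi\wedge F^S_\n+\tfrac16(F^S_\n)^3\Big|^2+\tfrac14\big|\varphi\wedge *(F^S_\n)^2\big|^2 &= |\xi^\flat\wedge\varphi|^2,\\
\det\!\big(\id_{TX}+(-\i F^S_\n)^\sharp\big) &= |\xi|^2 .
\end{align*}
Granting these, the lemma becomes exactly the identity $\varphi(v_1,v_2,v_3)^2+|\xi^\flat\wedge\varphi|^2=|\xi|^2$ for the simple $3$-vector $\xi$.

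The third identification is immediate, since $|\xi|^2=\det(\la v_i,v_j\ra)=\det\big(\delta_{ij}+\sum_a(\p f^a/\p x^i)(\p f^a/\p x^j)\big)$, while in the splitting $TX=\R^3\oplus\R^4$ the endomorphism $(-\i F^S_\n)^\sharp$ is off--diagonal with block the Jacobian $(\p f^a/\p x^i)$, whose $\det(\id+\,\cdot\,)$ is the same Gram determinant. For the second, I would expand $\xi^\flat\wedge\varphi$ as in \eqref{123phi=0}; since $dx^{123}\wedge\varphi=0$ it equals $I_1+I_2+I_3$, where $I_1,I_3$ have bidegree $(3,3)$ (multiples of $dx^{123}\wedge dy^{abc}$) and $I_2$ has bidegree $(2,4)$ (multiples of $dx^{ij}\wedge dy^{4567}$), so these are orthogonal and $|\xi^\flat\wedge\varphi|^2=|I_1+I_3|^2+|I_2|^2$. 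Using $(F^S_\n)^2=-F^2$ and $(F^S_\n)^3=-\i F^3$ together with \eqref{eq:FM I1}, \eqref{eq:FM I3} and \eqref{eq:FM I2} gives $|I_1+I_3|^2=|*\varphi\wedge F^S_\n+\tfrac16(F^S_\n)^3|^2$ and $|I_2|^2=\tfrac14|\varphi\wedge *(F^S_\n)^2|^2$, which are exactly the two norms with the stated constants. The first identification is of the same computational type: only the $(1,2)$-part $\sum_k dx^k\wedge i(\p_{k+1})F\wedge i(\p_{k+2})F$ of $\xi^\flat$ pairs with $\varphi=dx^{123}+\sum_i dx^i\wedge\om_i$, the $dx^{123}$-part contributing the constant $1=\la dx^{123},dx^{123}\ra$, and a short computation in the spirit of the $I_2$ computation matches the remaining pairing with $\tfrac12\la(F^S_\n)^2,*\varphi\ra$.

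Finally I would prove the pointwise identity $\varphi(v_1,v_2,v_3)^2+|\xi^\flat\wedge\varphi|^2=|\xi|^2$. All three quantities are homogeneous of degree two in $\xi$ and depend only on the oriented $3$-plane $\rmspan\{v_1,v_2,v_3\}$, so I may take $\{v_1,v_2,v_3\}$ orthonormal and prove $\varphi(v_1,v_2,v_3)^2+|\xi^\flat\wedge\varphi|^2=1$. This I would do exactly as in Lemma~\ref{lem:Cayley id}: both sides are $G_2$-invariant, $G_2$ acts transitively on orthonormal $2$-frames with stabiliser ${\rm SU}(2)$ acting on the orthogonal $\R^5\cong\R\oplus\C^2$, so after fixing $v_1=e_1$, $v_2=e_2$ I may reduce $v_3$ to the one--parameter form $\cos\theta\,e_3+\sin\theta\,e_4$ and then read off the identity from \eqref{varphi} by direct evaluation; alternatively this identity is a restatement of the associator equality \cite[Chapter I\hspace{-.1em}V,~Theorem~1.6]{HL}. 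The step I expect to be most delicate is the first identification, where the precise quadratic pairing must be matched to $\tfrac12\la(F^S_\n)^2,*\varphi\ra$; I also note that simplicity of $\xi$, used only in this last pointwise identity, is essential, as the identity fails for non--simple $3$-vectors.
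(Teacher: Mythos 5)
Your proposal is correct and follows essentially the same route as the paper's proof: both reduce the identity to the Harvey--Lawson associator equality for the tangent frame $v_1, v_2, v_3$ of the graph, identify $\varphi(v_1,v_2,v_3)$ with $1+\frac{1}{2}\la (F^S_\n)^2, *\varphi\ra$, reuse the $I_1, I_2, I_3$ decomposition of $v_1^\flat \wedge v_2^\flat \wedge v_3^\flat \wedge \varphi$ from Proposition \ref{prop:FM G2} for the middle two terms, and match $\det\left( \id_{TX} + (-\i F^S_\n)^\sharp \right)$ with the Gram determinant $\det\left(\id_3 + {}^t\! A A\right)$ via the off-diagonal block structure. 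The only deviations are cosmetic: you offer a self-contained $G_2$-orbit proof of the pointwise associator identity (modeled on Lemma \ref{lem:Cayley id}) where the paper simply cites \cite{HL}, and your Schur-complement reading of the block determinant replaces the paper's eigenvalue argument.
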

\begin{proof}
Define $\iota: B \rightarrow X=B \times T^4$ by 
$\iota (x) = (x, f(x))$. 
Set $v_i = \iota_* (\p_i)$ for $i=1,2,3$. 
Then, by the associator equality 
\cite[Chapter I\hspace{-.1em}V,Theorem 1.6]{HL}, we have 
\begin{align} \label{eq:associator HL}
|\iota^* \varphi (\p_1,\p_2,\p_3)|^2 + |*\varphi (v_1,v_2,v_3, \cdot)|^2 
= 
|v_1 \wedge v_2 \wedge v_3|^2. 
\end{align}
Then, 
since $\iota^* dx^i = dx^i$ and $\iota^* dy^a = df^a$, 
\eqref{eq:FM asso} implies that 
\begin{align*}
&\iota^* \varphi \\
=& dx^{123} 
+ dx^1 \wedge (df^{45} + df^{67})
+ dx^2 \wedge (df^{46} + df^{75})
- dx^3 \wedge (df^{47} + df^{56}) \\
=&
\left( 1+ \la dx^{23}, df^{45} + df^{67} \ra 
+ \la dx^{31}, df^{46} + df^{75} \ra - \la dx^{12}, df^{47} + df^{56} \ra \right) dx^{123},  
\end{align*}
where $df^{a b}$ is short for $df^a \wedge df^b$. 
On the other hand, since $F_\n^S = \i \sum_{a=4}^7 df^a \wedge dy^a$
by \eqref{eq:FM curv}, 
we have
\begin{align*}
\la (F^S_\n)^2, * \varphi \ra 
=&
\sum_{a,b=4}^7 \sum_{k \in \Z/3} \la df^{ab} \wedge dy^{ab}, dx^{k ,k+1} \wedge \om_{k+2} \ra \\
=&
2 \left( \la dx^{23}, df^{45} + df^{67} \ra 
+ \la dx^{31}, df^{46} + df^{75} \ra - \la dx^{12}, df^{47} + df^{56} \ra \right).  
\end{align*}
Hence, we obtain 
\begin{align} \label{eq:associator HL1}
\iota^* \varphi (\p_1,\p_2,\p_3)
= \varphi (v_1,v_2,v_3) 
=1+ \frac{1}{2} \la (F^S_\n)^2, * \varphi \ra. 
\end{align}
By the proof of Proposition \ref{prop:FM G2}, we have 
\begin{align} \label{eq:associator HL2}
\begin{split}
|*\varphi (v_1,v_2,v_3, \cdot)|^2
=&
|v_1^\flat \wedge v_2^\flat \wedge v_3^\flat \wedge \varphi|^2 \\
=&
|I_1 +I_3|^2 + |I_2|^2 \\
=&
\left| * \varphi \wedge F^S_\n + \frac{1}{6} (F^S_\n)^3 \right|^2
+
\frac{1}{4} |\varphi \wedge * (F^S_\n)^2|^2. 
\end{split}
\end{align}
Next, we compute $|v_1 \wedge v_2 \wedge v_3|^2$. 
Since $v_i=\iota_* (\p_i) = \p_i + \p f/\p x^i$, we have 
$$
|v_1 \wedge v_2 \wedge v_3|^2 
= \det \left(\id_3 + {}^t\! A A \right), 
$$
where $\id_3$ is the identity matrix of dimension 3, 
$A$ is a $4 \times 3$ matrix defined by 
$A = \left( \frac{\p f^a}{\p x^i} \right)_{4 \leq a \leq 7, 1 \leq i \leq 3}$ 
and ${}^t\! A$ is the transpose of $A$. 
Denote by 
$$
\{ 0, \pm \i \mu_1, \pm \i \mu_2, \pm \i \mu_3 \} 
\quad \mbox{and} \quad \{ \lambda_1, \lambda_2, \lambda_3 \} 
$$ 
the eigenvalues of $(-\i F^S_\n)^\sharp$ and 
${}^t\! A A$, respectively, where 
$\mu_i \in \R$ and $\lambda_j \geq 0$. 
Since 
$$
(-\i F^S_\n)^\sharp = 
\left(
\begin{array}{cc}
0 & -{}^t\! A \\
A& 0 
\end{array}
\right),  
\qquad
((-\i F^S_\n)^\sharp)^2 = 
\left(
\begin{array}{cc}
-{}^t\! A A & 0 \\
0              & -A {}^t\! A 
\end{array}
\right) 
$$
and $\{0, \lambda_1, \lambda_2, \lambda_3 \}$ are the eigenvalues of $A {}^t\! A$, 
we see that 
$$
\{ 0, \mu_1^2, \mu_2^2, \mu_3^2 \} = \{0, \lambda_1, \lambda_2, \lambda_3 \}. 
$$
Since 
$(-\i F^S_\n)^\sharp$ and $A {}^t\! A$ 
are conjugate to 
$$
0 \oplus \left(
\begin{array}{cc}
0         & -\mu_1 \\
\mu_1  & 0 
\end{array}
\right) 
\oplus
\left(
\begin{array}{cc}
0         & -\mu_2 \\
\mu_2  & 0 
\end{array}
\right) 
\oplus\left(
\begin{array}{cc}
0         & -\mu_3 \\
\mu_3  & 0 
\end{array}
\right) 
\ \mbox{and} \ 
\left(
\begin{array}{ccc}
\lambda_1& & \\
&\lambda_2& \\
& &\lambda_3 \\
\end{array}
\right), 
$$
respectively, we obtain 
\begin{align*}
\det \left( \id_{TX} + (-\i F^S_\n)^\sharp \right) 
=&
(1+\mu_1^2) (1+\mu_2^2) (1+\mu_3^2) \\
=&
(1+\lambda_1) (1+\lambda_2) (1+\lambda_3) \\
=& 
\det \left(\id_3 + {}^t\! A A \right) 
=
|v_1 \wedge v_2 \wedge v_3|^2 
\end{align*}
and the proof is completed. 
\end{proof}

Using Proposition \ref{prop:FM G2}, we obtain the following. 

\begin{proposition} \label{prop:FM G2 conn}
The following conditions are equivalent. 
\begin{enumerate}
\item
The graph $S$ is an associative submanifold with an appropriate orientation 
and $\n^B$ is flat. 
\item
$F_\n^3/6 + F_\n \wedge \ast\varphi=0$. 
\item
$F_\n^3/6 + F_\n \wedge \ast\varphi=0
\quad\mbox{and}\quad \varphi\wedge \ast F_\n^{2}=0.$
\end{enumerate}
\end{proposition}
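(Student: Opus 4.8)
The plan is to reduce the whole statement to Proposition \ref{prop:FM G2} by exploiting the bigrading of differential forms on $X = B \times T^4$: call a form of \emph{type $(p,q)$} if it is a combination of $dx^I \wedge dy^J$ with $|I| = p$ and $|J| = q$. Since $\dim B = 3$ and the fiber is $T^4$, every form of type $(p,q)$ with $p > 3$ or $q > 4$ vanishes identically, and distinct bidegrees are linearly independent. By \eqref{eq:FM curv} the two pieces of the curvature have pure type: $F_\n^B$ is of type $(2,0)$ and $F_\n^S$ is of type $(1,1)$, while \eqref{eq:FM asso} and \eqref{eq:FM coasso} give $\varphi = dx^{123} + \sum_i dx^i \wedge \om_i$ (types $(3,0)$ and $(1,2)$) and $* \varphi = dy^{4567} + \sum_k dx^{k,k+1} \wedge \om_{k+2}$ (types $(0,4)$ and $(2,2)$). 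The observation that condition (1) is precisely ``$S$ associative and $F_\n^B = 0$'' will let Proposition \ref{prop:FM G2} do the geometric work.

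The heart is the implication (2) $\Rightarrow$ (1), and here I would simply sort $F_\n^3/6 + F_\n \wedge * \varphi$ by bidegree. Expanding $F_\n^3 = (F_\n^B + F_\n^S)^3$ and counting types, every term containing a factor $F_\n^B$ has too many $dx$'s (type $(p,q)$ with $p \geq 4$) and hence vanishes, so $F_\n^3 = (F_\n^S)^3$, of type $(3,3)$. For $F_\n \wedge * \varphi$, the four a priori contributions reduce as follows: $F_\n^B \wedge (\sum_k dx^{k,k+1}\wedge\om_{k+2})$ is type $(4,2) = 0$, $F_\n^S \wedge dy^{4567}$ is type $(1,5) = 0$, the term $F_\n^S \wedge (\sum_k dx^{k,k+1}\wedge\om_{k+2}) = F_\n^S \wedge *\varphi$ is type $(3,3)$, and $F_\n^B \wedge dy^{4567}$ is type $(2,4)$. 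Since $(3,3) \neq (2,4)$, equation (2) splits into the two independent equations
\[
\frac{(F_\n^S)^3}{6} + F_\n^S \wedge * \varphi = 0
\qquad \text{and} \qquad
F_\n^B \wedge dy^{4567} = 0.
\]
The second forces $F_\n^B = 0$ because wedging a type-$(2,0)$ form with $dy^{4567}$ is injective, so $\n^B$ is flat; the first is exactly condition (2) of Proposition \ref{prop:FM G2}, whence $S$ is associative. This gives (1).

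The remaining directions are immediate. The implication (3) $\Rightarrow$ (2) just discards the second equation. For (1) $\Rightarrow$ (3), flatness of $\n^B$ gives $F_\n^B = 0$, hence $F_\n = F_\n^S$, so $F_\n^2 = (F_\n^S)^2$ and $F_\n^3 = (F_\n^S)^3$; applying Proposition \ref{prop:FM G2} in the form (1) $\Rightarrow$ (3) to the associative graph $S$ yields both $(F_\n^S)^3/6 + F_\n^S \wedge * \varphi = 0$ and $\varphi \wedge * (F_\n^S)^2 = 0$, which are precisely the two equations of (3) after the substitution $F_\n = F_\n^S$. This closes the cycle (3) $\Rightarrow$ (2) $\Rightarrow$ (1) $\Rightarrow$ (3). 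I expect the only real obstacle to be the bidegree bookkeeping in the middle step: one must check carefully that every mixed term involving $F_\n^B$ dies for dimensional reasons and that the surviving pieces land in the two distinct bidegrees $(3,3)$ and $(2,4)$, so that the vanishing of their sum is equivalent to the vanishing of each.
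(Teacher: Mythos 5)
Your proof is correct and takes essentially the same route as the paper: the identical bidegree bookkeeping shows $F_\n^3 = (F_\n^S)^3$ and splits $F_\n^3/6 + F_\n \wedge \ast\varphi$ into the type-$(3,3)$ piece $(F_\n^S)^3/6 + F_\n^S \wedge \ast\varphi$ and the type-$(2,4)$ piece $F_\n^B \wedge dy^{4567}$, whose independent vanishing then reduces everything to Proposition \ref{prop:FM G2}. The only (harmless) organizational difference is that the paper establishes $(2)\Leftrightarrow(3)$ directly by citing \cite[Remark 3.3]{KY} and then proves $(1)\Leftrightarrow(2)$, whereas you close the cycle $(3)\Rightarrow(2)\Rightarrow(1)\Rightarrow(3)$, routing the implication $(2)\Rightarrow(3)$ through the internal equivalences of Proposition \ref{prop:FM G2} instead.
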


\begin{proof}
Since $(3) \Rightarrow (2)$ is obvious and the converse holds by \cite[Remark 3.3]{KY}, 
(2) and (3) are equivalent. We show the equivalence of (1) and (2). 
By \eqref{eq:FM curv}, we have 
$(F_\n^B)^2=0$ and $F_\n^B \wedge (F_\n^S)^2=0$. Thus, we have 
$F_\n^3 = (F_\n^S)^3$ and 
$$
F_\n^3/6 + F_\n \wedge \ast\varphi 
= \left( (F_\n^S)^3/6 + F_\n^S \wedge \ast\varphi \right) + F_\n^B \wedge \ast\varphi. 
$$
By (\ref{eq:FM curv}), 
$(F_\n^S)^3/6 + F_\n^S \wedge \ast\varphi$ 
and $F_\n^B \wedge \ast\varphi$ 
are linear combinations of $dx^{123} \wedge dy^{abc}$'s  and 
$dx^{ij} \wedge dy^{4567}$'s, respectively. 
Hence, (2) is equivalent to 
$$
(F_\n^S)^3/6 + F_\n^S \wedge \ast\varphi=0 \quad\mbox{and}\quad F_\n^B \wedge \ast\varphi =0.
$$
The first equation is equivalent to saying that 
$S$ is an associative submanifold with an appropriate orientation by Proposition \ref{prop:FM G2}. 
By \eqref{eq:FM coasso} and (\ref{eq:FM curv}), 
we have 
$F_\n^B \wedge \ast\varphi = F_\n^B \wedge dy^{4567}$. 
Hence, 
$F_\n^B \wedge \ast\varphi=0$ if and only if $F_\n^B=0$. 
Then, the proof is completed. 
\end{proof}
\section{The real Fourier--Mukai transform for Cayley $T^4$-fibrations} \label{sec:dDT Spin7}
In this section, we compute the real Fourier--Mukai transform of Cayley cycles and prove main theorems. 

Let $B\subset\mathbb{R}^4$ be an open set with coordinates $(x^0, x^{1},x^{2},x^{3})$ and 
$f=(f^{4},f^{5},f^{6},f^{7}):B\to T^{4}$ be a smooth function with values in $T^{4}$. 
We use coordinates $(y^{4},y^{5},y^{6},y^{7})$ for $T^{4}$. 
Put
\[S:=\{\,(x,f(x))\mid x\in B\,\}\]
the graph of $f$, a 4-dimensional submanifold in $X:=B\times T^{4}$. 
The standard ${\rm Spin}(7)$-structure $\Phi$ on $X$ is described as 
\[
\Phi = dx^0 \wedge \varphi + *_7 \varphi, 
\]
where we use $\varphi$ in \eqref{eq:FM asso} and $*_7$ is the Hodge star operator on 
$( \{\, 0 \,\} \times \R^3) \times T^4$. 
Setting 
\[
\begin{aligned}
\tau_1 &=dx^{01} + dx^{23}, &  \tau_2&=dx^{02} + dx^{31}, & \tau_3 &=dx^{03} + dx^{12}, \\
\om_1 &=dy^{45} + dy^{67}, & \om_2 &=dy^{46} + dy^{75}, & \om_3 &=-(dy^{47} + dy^{56}). 
\end{aligned}
\]
$\Phi$ is also described as 
\begin{equation}\label{eq:FM Cayley}
\Phi = dx^{0123} + dy^{4567} + \sum_{i=1}^3 \tau_i \wedge \om_i.  
\end{equation}
Note that $\Phi$ in \eqref{Phi4} is also described as in \eqref{eq:FM Cayley}. 
Let 
\[
\n^B = d + \i \sum_{j=0}^3 A^j dx^j
\]
be a Hermitian connection of a trivial complex line bundle $B \times \C \to B$, 
where 
$A^j : B \to \R$ is a smooth function.

Next, we consider the mirror side. 
The real Fourier--Mukai transform of $(S, \n^B)$ is the connection on $X^{*}(\cong X)$ defined by 
\[
\nabla:=d+ \i \sum_{j=0}^3 A^{j} dx^j + \i \sum_{a=4}^7 f^{a} dy^{a}. 
\]
Then, its curvature 2-form $F_\nabla$ is described as $F_\nabla = F_\n^B + F_\n^S$, where 
\begin{equation} \label{eq:FM curv Spin7 0}
F_\n^B 
= \i \sum_{i, j=0}^3 \frac{\partial A^{j}}{\partial x^{i}} dx^i \wedge dx^j, \quad 
F_\n^S
= \i \sum_{i=0}^3 \sum_{a=4}^7 \frac{\partial f^{a}}{\partial x^{i}} dx^i \wedge dy^a. 
\end{equation}
Note that 
the real Fourier--Mukai transform of $S$ 
is the connection on $X^{*}(\cong X)$ defined by 
\[
\nabla^S:=d + \i \sum_{a=4}^7 f^{a} dy^{a} 
\]
and its curvature 2-form is given by $F_\n^S$. 
We first describe the condition for $S$ to be Cayley in terms of $F_\n^S$ in Theorem \ref{thm:FM Spin7}. 
Using this, we show that the similar statement also holds for $F_\n$ in Theorem \ref{thm:FM Spin7 conn}. 

\begin{theorem}\label{thm:FM Spin7}
Use the notation of Subsection \ref{sec:Spin7 geometry}. 
The graph $S$ is a Cayley submanifold with an appropriate orientation 
if and only if 
\[
\pi^2_7 \left( F_\nabla^S + \frac{1}{6} *_8 (F_\nabla^S)^3 \right) =0
\quad\mbox{and}\quad  \pi^4_7  \left((F_\nabla^S)^{2} \right)=0.
\]
\end{theorem}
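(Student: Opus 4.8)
The plan is to run the same strategy as in Proposition \ref{prop:FM G2}: characterise the Cayley condition by the vanishing of a projection of the tangent $4$-vector, and then read the two equations off a bidegree decomposition. Set $F:=-\i F^S_\n$, a real $2$-form of bidegree $(1,1)$ in the $dx^i/dy^a$ splitting. As in Proposition \ref{prop:FM G2}, $T_pS$ is spanned by $v_i=\p_i+\sum_{a=4}^7(\p f^a/\p x^i)\p_a$, and by \eqref{eq:FM curv Spin7 0} one has $v_i^\flat=dx^i+i(\p_i)F$ for $i=0,1,2,3$. By Lemma \ref{lem:Cayley char}, $S$ is Cayley (suitably oriented) if and only if
\[
\pi^4_7(\Xi)=0,\qquad \Xi:=v_0^\flat\wedge v_1^\flat\wedge v_2^\flat\wedge v_3^\flat=\prod_{i=0}^3\bigl(dx^i+i(\p_i)F\bigr).
\]
I would expand $\Xi=dx^{0123}+\Xi_{(3,1)}+\Xi_{(2,2)}+\Xi_{(1,3)}+\Xi_{(0,4)}$ by bidegree, where $\Xi_{(p,q)}$ carries $p$ factors $dx$ and $q$ factors $dy$. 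Since $i(\p_i)F$ lives purely in the $dy^a$, the piece $\Xi_{(3,1)}$ is linear in $F$, $\Xi_{(2,2)}$ is quadratic, $\Xi_{(1,3)}$ is cubic, and $\Xi_{(0,4)}$ is a multiple of $dy^{4567}$; the interior-product manipulations already carried out in Proposition \ref{prop:FM G2} express these pieces through $F$, $F^2$, $F^3$.

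The decoupling comes from the self-duality of the $\Sp$-module $\Lambda^4_7W^*$. Since $*_8\Phi=\Phi$, a dimension count identifies the self-dual part $\Lambda^4_+W^*=W_1\oplus W_7\oplus W_{27}$, so $*_8\lambda^4(\alpha)=\lambda^4(\alpha)$ for every $\alpha$. From \eqref{def:lambda} together with \eqref{eq:FM asso}--\eqref{eq:FM coasso} one checks that $\lambda^4(dx^j)$ $(j=1,2,3)$ is of pure bidegree $(2,2)$, that $\lambda^4(dy^a)$ $(a=4,\dots,7)$ is of pure bidegree $(3,1)\oplus(1,3)$, and that the $y$-components occurring are the self-dual forms $\om_i$ (indeed $\ast_y\om_i=\om_i$). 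By \eqref{eq:Spin7 proj 1} the bidegree $(4,0)$ and $(0,4)$ parts of $\Xi$ are annihilated by $\pi^4_7$, and $\pi^4_7(\Xi)$ splits orthogonally into a term in $\mathrm{span}\{\lambda^4(dx^j)\}$ fed only by $\Xi_{(2,2)}$ and a term in $\mathrm{span}\{\lambda^4(dy^a)\}$ fed only by $\Xi_{(3,1)}+\Xi_{(1,3)}$. Hence $\pi^4_7(\Xi)=0$ is equivalent to the two independent equations $\pi^4_7(\Xi_{(2,2)})=0$ and $\pi^4_7(\Xi_{(3,1)}+\Xi_{(1,3)})=0$.

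To match these with the statement I would use $\ast_x$, the Hodge star of the base $\R^4$-factor alone, which is an isometry (self-adjoint in middle degree). For the quadratic equation one finds $\Xi_{(2,2)}=c\,(\ast_x\otimes\id)F^2$ (the two differ only by replacing $dx^{kl}$ with its $x$-complement). The eigen-relation $(\ast_x\otimes\id)\lambda^4(dx^j)=\pm\lambda^4(dx^j)$, forced by $*_8\lambda^4(dx^j)=\lambda^4(dx^j)$ together with $*_8=\pm\ast_x\otimes\ast_y$ on a fixed bidegree and $\ast_y\om_i=\om_i$, then yields $\pi^4_7(\Xi_{(2,2)})=\pm c\,\pi^4_7(F^2)$; since $(F^S_\n)^2=-F^2$ this is exactly $\pi^4_7((F^S_\n)^2)=0$. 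For the remaining equation, self-duality of $\lambda^4(dy^a)$ shows its $(1,3)$-part is $*_8$ of its $(3,1)$-part $P_a$, so that
\[
\langle \Xi_{(3,1)}+\Xi_{(1,3)},\lambda^4(dy^a)\rangle=\langle \Xi_{(3,1)}+*_8\Xi_{(1,3)},\,P_a\rangle .
\]
I then expect to verify that $\ast_x\otimes\id$ sends $\lambda^2(dy^a)$ (bidegree $(1,1)$) to a multiple of $P_a$, and sends $F\mapsto\pm\Xi_{(3,1)}$ and $\tfrac16 *_8F^3\mapsto\pm *_8\Xi_{(1,3)}$; being an isometry, it identifies $\pi^4_7(\Xi_{(3,1)}+\Xi_{(1,3)})=0$ with $\pi^2_7\bigl(F\pm\tfrac16 *_8F^3\bigr)=0$, which, restoring $F^S_\n=\i F$, is the stated condition $\pi^2_7\bigl(F^S_\n+\tfrac16 *_8(F^S_\n)^3\bigr)=0$.

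The hard part will be this last matching: establishing that $\Xi_{(3,1)}+*_8\Xi_{(1,3)}$ is the image under $\ast_x\otimes\id$ of \emph{exactly} $F+\tfrac16 *_8F^3$, with the correct relative sign between the linear and cubic terms and hence the correct interplay of the three factors of $\i$. This forces one to carry the factor $1/6$ and all orientation signs through the cubic interior-product computation and through the $\sqrt8$ normalisation in $\lambda^4$, and to keep the conventions for $*_8$, $\ast_x$ and $\ast_y$ coherent via \eqref{eq:Hodge 78}. By contrast, the bidegree decoupling and the quadratic equation are essentially forced by the self-duality of the $\om_i$ and of $\Lambda^4_7W^*$, and should be routine.
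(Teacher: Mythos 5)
Your proposal is correct, but it takes a genuinely different route from the paper. The paper splits $\R^8$ as $1+7$: it isolates $dx^0$, writes $F=dx^0\wedge F_1+F_2$, reuses the associative computation of Proposition \ref{prop:FM G2} (the terms $I_1,I_2,I_3$) to evaluate $\lambda^4(\alpha)(v_0,v_1,v_2,v_3)$ directly, and then proves an explicit coordinate identity (Lemma \ref{lem:2747}) expressing $\pi^2_7(F-\tfrac16\ast_8F^3)$ and $\pi^4_7(F^2)$ through $F_1,F_2$, matching them with the two blocks $J_1,J_2$. You instead split $8=4+4$ by base--fiber bidegree, observe that $\lambda^4(dx^j)$ is pure $(2,2)$ while $\lambda^4(dy^a)$ is $(3,1)\oplus(1,3)$ (so the $(4,0)$ and $(0,4)$ pieces of $\Xi$ are killed by $\pi^4_7$ and the two equations decouple), and use self-duality of $\Lambda^4_7$ and of the $\om_i$ together with the partial star $\ast_x\otimes\id$ as the matching isometry. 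Your structural claims check out: one computes $\Xi_{(2,2)}=-\tfrac12(\ast_x\otimes\id)F^2$, $\Xi_{(3,1)}=-(\ast_x\otimes\id)F$, $(\ast_x\otimes\id)\Xi_{(1,3)}=-\tfrac16F^3$, and $(\ast_x\otimes\id)\lambda^2(dy^a)=\sqrt2\,P_a$, which assemble exactly as you predict. Your argument buys a conceptual explanation of the decoupling and makes the quadratic equation nearly immediate; the paper's method costs more computation but yields the explicit projection formulas of Lemma \ref{lem:2747}, which it then reuses for the Cayley equality (Lemma \ref{lem:Cayley HL}) and for Theorem \ref{thm:FM Spin7 conn}, so the two routes are not interchangeable downstream. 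One small point you should make explicit: the Cayley condition only produces the four components of $\pi^2_7$ along $\lambda^2(dy^a)$, whereas $\pi^2_7$ has seven; the remaining three vanish automatically because $\lambda^2(dx^j)$ has pure bidegree $(2,0)\oplus(0,2)$ while $F+\tfrac16\ast_8F^3$ has bidegree $(1,1)$ (as $F^3$ is of bidegree $(3,3)$) --- a one-line remark in your own framework, but needed for the stated equivalence. The sign and normalization bookkeeping you flag as the hard part is real but finite, and is precisely the content the paper carries in its Lemma \ref{lem:2747}.
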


\begin{remark} \label{rem:FM Spin7}
A similar statement for graphical submanifolds 
is given by Harvey and Lawson in  \cite[Chapter I\hspace{-.1em}V,Theorem 2.20]{HL}. 
They showed that 
$S$ is a Cayley submanifold with an appropriate orientation 
if and only if 
two equations (1)' and (2)' are satisfied. 
These equations are given in terms of differential equations for $f$ 
and correspond to the two equations above. 
They also showed that 
if the determinant of the Jacobian of $f$ is never 1, (1)' implies (2)'. 
This is generalized in \cite{KYSpin7}. 

Thus, unlike the $G_2$ case (\cite[Remark 3.3]{KY}), 
the first equation \emph{does not} always imply the second. 
Counterexamples are provided in \cite[p.132]{HL}.  
\end{remark}

\begin{proof}
Set 
\[
F: = - \i F_\nabla^S = d x^0 \wedge F_1 + F_2
\]
where 
$F_1: B \to (\R^7)^*$ and $F_2: B \to \Lambda^2 (\R^7)^*$ are given by 
\begin{equation} \label{eq:FM curv Spin7}
F_1 = \sum_{a=4}^7 \frac{\partial f^{a}}{\partial x^{0}} dy^{a}, \quad 
F_2 = \sum_{i=1}^3 \sum_{a=4}^7 \frac{\partial f^{a}}{\partial x^{i}} dx^i \wedge dy^a. 
\end{equation}
Set 
$\partial_{i}:=\partial/\partial x^i$ and $\partial_a:=\partial/\partial y^a$ 
for $0 \leq i \leq 3$ and $4 \leq a \leq 7$. 
By Lemma \ref{lem:Cayley char}
and equations \eqref{eq:Spin7 proj 1} and \eqref{eq:def tau},  
$S$ is Cayley with an appropriate orientation if and only if  
$\lambda^4 (\alpha) (v_0, v_{1},v_{2},v_{3})=0$ for any 
$p\in S$, $v_0, v_{1},v_{2},v_{3}\in T_{p}S$ and 
$\alpha \in \rmspan \{\, dx^1, \cdots, dx^3, dy^4, \cdots, dy^7 \,\}$. 
The tangent space of $S$ is spanned by $v_0, v_1,v_2,v_3$, where 
\[
v_{j}:=\p_j + \sum_{a=4}^7 \frac{\partial f^{a}}{\partial x^{j}} \p_a 
\]
for $0 \leq j \leq 3$. By \eqref{eq:FM curv Spin7}, note that 
\[
v_0 = \p_0 + F_1^\sharp, \quad 
v_j = \p_j + (i(\p_j) F_2)^\sharp 
\]
for $1 \leq j \leq 3$. 
Then, we compute 
\[
\begin{aligned}
&\sqrt{8} \lambda^4 (\alpha) (v_0, v_{1},v_{2},v_{3}) \\
=& 
\left( d x^0 \wedge i (\alpha^\sharp) \ast_7 \varphi - \alpha \wedge \varphi \right) (v_0, v_{1},v_{2},v_{3}) \\
=&
*_7 \varphi (\alpha^\sharp, v_{1},v_{2},v_{3}) 
- \alpha (v_0) \varphi (v_{1},v_{2},v_{3}) + \sum_{k \in \Z/3} \alpha (v_k) \varphi (v_0, v_{k+1}, v_{k+2})\\
=&
- \la *_7 \varphi (v_{1},v_{2},v_{3}, \,\cdot\,), \alpha \ra 
- \la \alpha, F_1 \ra \varphi (v_{1},v_{2},v_{3}) \\
&+ \sum_{k \in \Z/3} \alpha (v_k) \varphi (F_1^\sharp, v_{k+1}, v_{k+2}). 
\end{aligned}
\]
Since 
$- *_7 \varphi (v_{1},v_{2},v_{3}, \,\cdot\,) = 
- i(v_3) i(v_2) i(v_1) *_7 \varphi 
= - *_7 (v_3^\flat \wedge v_2^\flat \wedge v_1^\flat \wedge \varphi)
= *_7 (v_1^\flat \wedge v_2^\flat \wedge v_3^\flat \wedge \varphi), 
$
we have 
\begin{align*}
&\sqrt{8} \lambda^4 (\alpha) (v_0, v_{1},v_{2},v_{3}) \\
=&
\left \la v_1^\flat \wedge v_2^\flat \wedge v_3^\flat \wedge \varphi 
- \varphi (v_{1},v_{2},v_{3}) *_7 F_1 
+ \sum_{k \in \Z/3} \varphi (F_1^\sharp, v_{k+1}, v_{k+2}) *_7 v_k^\flat, *_7 \alpha \right \ra. 
\end{align*}
By the proof of Proposition \ref{prop:FM G2}, we have 
\[
v_1^\flat \wedge v_2^\flat \wedge v_3^\flat \wedge \varphi 
= I_1 + I_2 + I_3, 
\]
where 
\begin{equation} \label{eq:Spin7 I123}
I_1 = -*_7 \varphi \wedge F_2, \quad I_2 = \frac{1}{2} \varphi \wedge *_7 F_2^2, 
\quad I_3 = \frac{1}{6} F_2^3. 
\end{equation}
Here, we set 
\begin{align}
\label{eq:Cayley 1}
\begin{split}
J_1 &= I_1 + I_3 - \varphi (v_{1},v_{2},v_{3}) *_7 F_1 
+ \sum_{k \in \Z/3} \varphi (F_1^\sharp, v_{k+1}, v_{k+2}) *_7 (i(\p_k) F_2), \\
J_2&= I_2 + \sum_{k \in \Z/3} \varphi (F_1^\sharp, v_{k+1}, v_{k+2}) *_7 dx^k. 
\end{split}
\end{align}
Then, 
\begin{align} \label{eq:Cayley lam4}
\sqrt{8} \lambda^4 (\alpha) (v_0, v_{1},v_{2},v_{3}) 
= \la J_1 + J_2, *_7 \alpha \ra
\end{align}
Since $*_7 I_1, *_7 I_3, F_1$ are linear combinations of $dy^a$'s 
and $*_7 I_2$ is a linear combination of $dx^i$'s, 
the graph $S$ is Cayley with an appropriate orientation if and only if
\begin{align*}
J_1=0 \quad \mbox{and} \quad J_2=0.  
\end{align*}
To simplify these equations, we show the following. 

\begin{lemma} \label{lem:Cayley simplify}
We have 
\[
\begin{aligned}
\varphi (v_{1},v_{2},v_{3}) &= 1- \frac{1}{2} *_7 \left( \varphi \wedge F_2^2 \right), \\
\sum_{k \in \Z/3} \varphi (F_1^\sharp, v_{k+1}, v_{k+2}) d x^k&
= - *_7 (F_1 \wedge F_2 \wedge \varphi). 
\end{aligned}
\]
\end{lemma}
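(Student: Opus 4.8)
The plan is to handle the two identities separately: the first follows from the $G_2$ computation already in hand, while the second requires a direct expansion in the explicit frame. Note that the first identity is a scalar equation and the second an equality of $1$-forms; in both cases the right-hand side is expressed through the Hodge operator $\ast_7$ on the $7$-dimensional space $V$ spanned by $\{\,dx^1, dx^2, dx^3, dy^4, \dots, dy^7\,\}$.

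For the first identity I would reuse equation \eqref{eq:associator HL1}. The vectors $v_1, v_2, v_3$ and the forms $\varphi, F_2$ here play precisely the roles of the corresponding $G_2$ data in Lemma \ref{lem:associator HL}: by \eqref{eq:FM curv Spin7} one has $v_j = \p_j + (i(\p_j)F_2)^\sharp$ for $1 \le j \le 3$, which is exactly the tangent frame of that computation, and $F_2$ is, by the decomposition $-\i F_\nabla^S = dx^0 \wedge F_1 + F_2$, the part not containing $dx^0$. Since the derivation of \eqref{eq:associator HL1} is purely algebraic in $v_1, v_2, v_3$ and $\varphi$, it applies verbatim with the $G_2$-curvature there replaced by $\i F_2$, giving $\varphi(v_1, v_2, v_3) = 1 + \frac{1}{2}\la (\i F_2)^2, \ast_7 \varphi\ra$. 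Tracking $\i^2 = -1$ and using $\la F_2^2, \ast_7 \varphi\ra\, {\rm vol}_7 = F_2^2 \wedge \ast_7(\ast_7 \varphi) = \varphi \wedge F_2^2$, hence $\la F_2^2, \ast_7 \varphi\ra = \ast_7(\varphi \wedge F_2^2)$, then produces the stated formula $1 - \frac{1}{2}\ast_7(\varphi \wedge F_2^2)$.

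For the second identity, both sides are $1$-forms supported on $dx^1, dx^2, dx^3$, so I would compare the coefficient of $dx^k$ for each $k \in \Z/3$. Writing $F_2 = \sum_{m=1}^3 dx^m \wedge \psi_m$ with $\psi_m := i(\p_m)F_2 = \sum_{a=4}^7 \frac{\p f^a}{\p x^m}dy^a$, one has $v_j = \p_j + \psi_j^\sharp$. Because $F_1^\sharp$ is a $y$-direction vector, $i(F_1^\sharp)\varphi = -\sum_{i=1}^3 dx^i \wedge i(F_1^\sharp)\om_i$; expanding $\varphi(F_1^\sharp, v_{k+1}, v_{k+2}) = (i(F_1^\sharp)\varphi)(v_{k+1}, v_{k+2})$ multilinearly and discarding the terms where $F_1^\sharp$ is paired with two $x$-directions or with two $y$-directions (both vanish by the structure of $\varphi$) leaves $\om_{k+2}(F_1^\sharp, \psi_{k+1}^\sharp) - \om_{k+1}(F_1^\sharp, \psi_{k+2}^\sharp)$ as the left-hand coefficient of $dx^k$. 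For the right-hand side I would expand $F_1 \wedge F_2 \wedge \varphi$ directly: the $dx^{123}$ summand of $\varphi$ contributes nothing since it forces a repeated $dx$, so only $\sum_i dx^i \wedge \om_i$ survives and the product is a combination of the $6$-forms $dx^{mi} \wedge dy^{4567}$ with $m \ne i$. Applying $\ast_7$, which sends $dx^{mi} \wedge dy^{4567}$ to $\pm dx^k$ with $k$ the complementary index, and matching against the coefficients above finishes the identity.

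The main difficulty is the sign and index bookkeeping in the second identity: correctly tracking the cyclic $\Z/3$ labelling of the $\om_i$, the anticommutation signs incurred in moving $F_1$ and the various $dx^m$ past one another inside $F_1 \wedge F_2 \wedge \varphi$, and the orientation-dependent sign of $\ast_7$ on $V$. I expect all of these to be fixed unambiguously by testing the identity on one explicit frame, for instance $F_1 = dy^4$ and $F_2 = dx^1 \wedge dy^7$, where a short computation gives $dx^2$ on both sides.
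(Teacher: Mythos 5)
Your proposal is correct, and it splits along the same lines as the paper's proof, diverging only in the endgame of the second identity. For the first identity you do exactly what the paper does: the paper simply cites \eqref{eq:associator HL1}, and your bookkeeping is right, since the $G_2$-section curvature is $\i F_2$ in the present notation and $\la F_2^2, *_7 \varphi \ra = *_7(\varphi \wedge F_2^2)$ because $*_7^2 = \id$ in dimension $7$. For the second identity both arguments begin identically: the paper also uses that $F_1^\sharp$ points in the $y$-directions so only the mixed terms survive, obtaining $\varphi(F_1^\sharp, v_{k+1}, v_{k+2}) = \la F_1 \wedge F_2,\, dx^{k+2}\wedge \om_{k+1} - dx^{k+1}\wedge \om_{k+2} \ra$, which is precisely your coefficient $\om_{k+2}(F_1^\sharp, \psi_{k+1}^\sharp) - \om_{k+1}(F_1^\sharp, \psi_{k+2}^\sharp)$. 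The routes then differ: the paper recognizes $dx^{k+1}\wedge\om_{k+2} - dx^{k+2}\wedge\om_{k+1} = i(\p_k) *_7 \varphi$ and concludes invariantly via $-\la F_1\wedge F_2, i(\p_k)*_7\varphi\ra = -\la dx^k, *_7(F_1\wedge F_2\wedge\varphi)\ra$, so no monomial Hodge stars or orientation signs ever enter, whereas you expand $F_1\wedge F_2\wedge\varphi = -\sum_{m\neq i}\om_i(F_1^\sharp,\psi_m^\sharp)\, dx^{mi}\wedge dy^{4567}$ and apply $*_7$ termwise. Your route does close: with $\vol = dx^{123}\wedge dy^{4567}$ one gets $*_7(dx^{k+1,k+2}\wedge dy^{4567}) = dx^k$, the coefficients match, and your test case $F_1 = dy^4$, $F_2 = dx^1\wedge dy^7$ indeed yields $dx^2$ on both sides. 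The paper's contraction identity buys exactly the freedom from the sign and orientation bookkeeping you identify as the main difficulty; your version is more elementary but sign-heavy. One caution: a single test case cannot by itself ``fix unambiguously'' all the independent sign choices (your example activates only the $\om_3(\p_4,\p_7)$ pairing in the $k=2$ slot), so it should be treated as a sanity check on top of the general expansion, not a substitute for carrying the signs through it.
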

\begin{proof}
The first equation follows from \eqref{eq:associator HL1}. 
We prove the second equation. 
Since $F_1$ is a linear combination of $dy^a$'s, 
the equation \eqref{eq:FM asso} implies that 
\[
\begin{aligned}
\varphi (F_1^\sharp, v_{k+1}, v_{k+2})
=
\varphi (F_1^\sharp, \p_{k+1}, (i(\p_{k+2}) F_2)^\sharp) 
+
\varphi (F_1^\sharp, (i(\p_{k+1}) F_2)^\sharp, \p_{k+2}). 
\end{aligned}
\]
We compute 
\[
\begin{aligned}
\varphi (F_1^\sharp, \p_{k+1}, (i(\p_{k+2}) F_2)^\sharp) 
&=
- \om_{k+1} (F_1^\sharp, (i(\p_{k+2}) F_2)^\sharp) \\
&=
- \la F_1 \wedge i(\p_{k+2}) F_2, \om_{k+1} \ra\\
&=
\la i(\p_{k+2}) (F_1 \wedge F_2), \om_{k+1} \ra\\
&=
\la F_1 \wedge F_2, d x^{k+2} \wedge \om_{k+1} \ra. 
\end{aligned}
\]
Then, we have 
\[
\begin{aligned}
\varphi (F_1^\sharp, v_{k+1}, v_{k+2})
&= 
\la F_1 \wedge F_2, d x^{k+2} \wedge \om_{k+1} - d x^{k+1} \wedge \om_{k+2} \ra \\
&=
- \la F_1 \wedge F_2, i(\p_{k}) *_7 \varphi \ra \\
&=
- \la d x^k \wedge F_1 \wedge F_2, *_7 \varphi \ra
=
- \la d x^k, *_7 (F_1 \wedge F_2 \wedge \varphi) \ra. 
\end{aligned}
\]
By \eqref{eq:FM asso} and \eqref{eq:FM curv Spin7}, 
$F_1 \wedge F_2 \wedge \varphi$ is a linear combination of $d x^{i j} \wedge d y^{4567}$'s, 
and hence, the proof is completed. 
\end{proof}

Thus, by \eqref{eq:Spin7 I123}, \eqref{eq:Cayley 1} and Lemma \ref{lem:Cayley simplify}, 
we see that 
\begin{align}
\label{eq:Cayley 2}
\begin{split}
J_1 =& - *_7 \varphi \wedge F_2 + \frac{1}{6} F_2^3 
-\left( 1-\frac{1}{2} *_7 \left( \varphi \wedge F_2^2 \right) \right) *_7 F_1 
\\
&+ *_7 (F_1 \wedge F_2 \wedge \varphi) \wedge *_7 F_2 , 
\\  
J_2 =& 
\frac{1}{2} \varphi \wedge *_7 F_2^2 
- F_1 \wedge F_2 \wedge \varphi .
\end{split}
\end{align}
Now, we describe 
$\pi^2_7 (F - *_8 F^3/6)$ and $\pi^4_7 (F^2)$. 

\begin{lemma} \label{lem:2747}
We have 
\begin{equation*}
\begin{aligned}
2 \pi^2_7 \left(F - \frac{1}{6} *_8 F^3 \right) 
=&
\lambda^2 \left(
*_7 \left( *_7 \varphi \wedge F_2 - \frac{1}{6} F_2^3 
+\left( 1-\frac{1}{2} *_7 \left( \varphi \wedge F_2^2 \right) \right) *_7 F_1 \right. \right.\\
&\left. \left. - *_7 (F_1 \wedge F_2 \wedge \varphi) \wedge *_7 F_2
\right) \right), \\
\sqrt{8} \pi^4_7 (F^2) =& \lambda^4 
\left(*_7 \left( 2 F_1 \wedge F_2 \wedge \varphi - \varphi \wedge *_7 F_2^2 \right) \right). 
\end{aligned}
\end{equation*}
\end{lemma}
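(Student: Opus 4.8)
The plan is to reduce both claims to two ``master formulas'' that compute $\pi^2_7$ and $\pi^4_7$ of a form which is \emph{split} relative to $W^* = \R e^0 \oplus V^*$. Write a generic $2$-form as $G = e^0 \wedge G_1 + G_2$ and a generic $4$-form as $\eta = e^0 \wedge \eta_1 + \eta_2$, with $G_1 \in V^*$, $G_2 \in \Lambda^2 V^*$, $\eta_1 \in \Lambda^3 V^*$, $\eta_2 \in \Lambda^4 V^*$. Since $\lambda^2, \lambda^4 : V^* \to \Lambda^2_7 W^*, \Lambda^4_7 W^*$ are isometries by Lemma \ref{lem:lambdas}, formula \eqref{eq:Spin7 proj 1} says $\pi^k_7(\cdot) = \lambda^k(\gamma)$, where $\gamma \in V^*$ is characterized by $\la \gamma, \beta \ra = \la \cdot, \lambda^k(\beta) \ra$ for all $\beta \in V^*$. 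Feeding in the explicit $\lambda^k$ from \eqref{def:lambda}, using that the $e^0$- and $V^*$-parts are orthogonal, the adjunction $\la i(\beta^\sharp)\mu, \nu \ra = \la \mu, \beta \wedge \nu \ra$, the ``$1$-form $\wedge$ $6$-form'' pairing $\beta \wedge \Theta = \la \beta, *_7 \Theta \ra\, {\rm vol}_7$, and $*_7^2 = \id$ (valid in the odd dimension $7$), I expect to obtain
\[
\pi^2_7(G) = \tfrac12 \lambda^2\big( G_1 + *_7(G_2 \wedge *_7 \varphi) \big), \qquad
\pi^4_7(\eta) = \tfrac{1}{\sqrt{8}} \lambda^4\big( *_7(\eta_1 \wedge \varphi - \varphi \wedge *_7 \eta_2) \big).
\]

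Next I would compute the powers of $F = e^0 \wedge F_1 + F_2$. Since $(e^0 \wedge F_1)^2 = 0$, one has $F^2 = 2\, e^0 \wedge (F_1 \wedge F_2) + F_2^2$ and $F^3 = F_2^3 + 3\, e^0 \wedge F_1 \wedge F_2^2$, whence \eqref{eq:Hodge 78} gives $*_8 F^3 = e^0 \wedge *_7 F_2^3 + 3\, *_7(F_1 \wedge F_2^2)$. Thus $F - \tfrac16 *_8 F^3$ is split with $G_1 = F_1 - \tfrac16 *_7 F_2^3$ and $G_2 = F_2 - \tfrac12 *_7(F_1 \wedge F_2^2)$, while $F^2$ is split with $\eta_1 = 2 F_1 \wedge F_2$ and $\eta_2 = F_2^2$. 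Substituting the latter into the master formula for $\pi^4_7$ yields the second equation of the lemma immediately, with no further manipulation.

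For the $\pi^2_7$ equation, substituting $G_1, G_2$ gives
\[
2\pi^2_7\big(F - \tfrac16 *_8 F^3\big) = \lambda^2\big( F_1 - \tfrac16 *_7 F_2^3 + *_7(F_2 \wedge *_7 \varphi) - \tfrac12 *_7(*_7(F_1 \wedge F_2^2) \wedge *_7 \varphi) \big).
\]
Three of these four terms already agree with the claimed expression after moving $*_7$ inside and using $*_7^2 = \id$, so the whole statement reduces to the purely $G_2$-algebraic identity
\[
*_7\big( *_7(F_1 \wedge F_2^2) \wedge *_7 \varphi \big) = *_7(\varphi \wedge F_2^2)\, F_1 + 2\, *_7\big( *_7(F_1 \wedge F_2 \wedge \varphi) \wedge *_7 F_2 \big),
\]
which splits the single ``mixed'' term into the two terms $-\tfrac12 *_7(\varphi \wedge F_2^2) F_1$ and $-*_7(F_1 \wedge F_2 \wedge \varphi) \wedge *_7 F_2$ that appear in the lemma. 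This identity is the main obstacle. I would prove it by pairing both sides against an arbitrary $c \in V^*$ and repeatedly applying the Hodge/interior-product identities of Subsection ``The Hodge-$*$ operator'', the Leibniz rule for $i(c^\sharp)$, and the contraction identities of Lemma \ref{lem:G2 identities}; concretely, expanding $i(c^\sharp)(F_1 \wedge F_2^2)$ and $i(c^\sharp)(F_1 \wedge F_2 \wedge \varphi)$ and matching the scalar $\la F_2^2, *_7 \varphi \ra = *_7(\varphi \wedge F_2^2)$ against the leftover $G_2$-contraction terms.

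Everything else is degree bookkeeping once this identity is available, and the two arguments of $\lambda^2$ and $\lambda^4$ come out to be exactly $-*_7 J_1$ and $-2\, *_7 J_2$ for the $J_1, J_2$ of \eqref{eq:Cayley 2}; this is precisely the bridge that the subsequent proof of Theorem \ref{thm:FM Spin7} needs, since $\lambda^2, \lambda^4$ and $*_7$ are isomorphisms and hence the vanishing of the projections is equivalent to $J_1 = J_2 = 0$. I anticipate that the only genuinely nontrivial point is the $G_2$-identity above; the master formulas and the computation of $F^2, F^3, *_8 F^3$ should be routine given \eqref{eq:Hodge 78}, \eqref{def:lambda}, and \eqref{eq:Spin7 proj 1}.
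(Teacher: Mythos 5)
Your proposal is correct and follows essentially the same route as the paper: both reduce the projections to $V^*$-valued data via \eqref{eq:Spin7 proj 1} and the isometries of Lemma \ref{lem:lambdas}, and the ``$G_2$-algebraic identity'' you isolate as the main obstacle is exactly the inner step of the paper's computation, proved there precisely by your suggested method (expanding $i(e_\mu)(F_1 \wedge F_2^2)$ by the Leibniz rule and converting $\la i(e_\mu)F_2, *_7(F_1\wedge F_2\wedge\varphi)\ra$ by Hodge adjunction). The only cosmetic difference is that the paper handles $\pi^2_7(*_8F^3)$ by pairing $F^3$ against $\lambda^6(e^\mu)$ directly, whereas you first compute $*_8F^3 = e^0\wedge *_7F_2^3 + 3*_7(F_1\wedge F_2^2)$ via \eqref{eq:Hodge 78} and feed it into your (correct) master formula $2\pi^2_7(e^0\wedge G_1+G_2)=\lambda^2\left(G_1+*_7(G_2\wedge *_7\varphi)\right)$; these are the same manipulations reorganized.
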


\begin{proof}
Set 
$$
\{\, e^0, \cdots, e^7 \,\} = \{\, dx^0, \cdots, dx^3, dy^4, \cdots, dy^7 \,\} \quad \mbox{and} \quad 
\{\, e_0, \cdots, e_7 \,\} = \{\, \p_0, \cdots, \p_7 \,\}.
$$ 
Then, by \eqref{eq:Spin7 proj 1} and \eqref{def:lambda}, we have 
\[
\begin{aligned}
2 \pi^2_7 (F)
=&  2 \sum_{\mu=1}^7 \la F, \lambda^2 (e^\mu) \ra \cdot \lambda^2 (e^\mu)\\
=&  \sum_{\mu=1}^7 \la e^0 \wedge F_1 + F_2, e^0 \wedge e^\mu + i (e_\mu) \varphi  \ra \cdot \lambda^2 (e^\mu)\\
=& \sum_{\mu=1}^7 \left( \la F_1, e^\mu \ra  + \la F_2, i (e_\mu) \varphi  \ra \right) \cdot \lambda^2 (e^\mu). 
\end{aligned}
\]
Since 
$\la F_2, i (e_\mu) \varphi  \ra 
= *_7 (F_2 \wedge e^\mu \wedge *_7 \varphi) 
= \la e^\mu, *_7 (F_2 \wedge *_7 \varphi) \ra, 
$
we obtain 
\begin{equation} \label{eq:*F proj}
2 \pi^2_7 (F) = \lambda^2 \left(F_1 + *_7 (F_2 \wedge *_7 \varphi) \right). 
\end{equation}
We also compute 
\[
\pi^2_7 (*_8 F^3)
= \sum_{\mu=1}^7 \la *_8 F^3, \lambda^2 (e^\mu) \ra \cdot \lambda^2 (e^\mu) 
= \sum_{\mu=1}^7 \la F^3, \lambda^6 (e^\mu) \ra \cdot \lambda^2 (e^\mu). 
\]
By \eqref{eq:Hodge 78}, we have for $1 \leq \mu \leq 7$  
\[
2 \lambda^6 (e^\mu) 
= *_8 \left( e^0 \wedge e^\mu + i(e_\mu) \varphi \right)
= *_7 e^\mu + e^\mu \wedge *_8 \varphi
= *_7 e^\mu + e^0 \wedge e^\mu \wedge *_7 \varphi, 
\]
and hence, 
\[
\begin{aligned}
2 \la F^3, \lambda^6 (e^\mu) \ra
&= 
\la 3 e^0 \wedge F_1 \wedge F_2^2 + F_2^3, *_7 e^\mu 
+ e^0 \wedge e^\mu \wedge *_7 \varphi \ra\\
&=
3 \la F_1 \wedge F_2^2, e^\mu \wedge *_7 \varphi \ra 
+ \la F_2^3, *_7 e^\mu \ra. 
\end{aligned}
\]
The first term is computed as 
\[
\begin{aligned}
3 \la F_1 \wedge F_2^2, e^\mu \wedge *_7 \varphi \ra 
=& 
3 \la i (e_\mu) \left( F_1 \wedge F_2^2 \right), *_7 \varphi \ra \\
=&
3 *_7 \left(  \left(  \la e^\mu, F_1 \ra F_2^2 
- 2 F_1 \wedge ( i (e_\mu) F_2) \wedge F_2 \right) \wedge \varphi \right)\\
=& 
3 \la *_7 (F_2^2 \wedge \varphi ) F_1, e^\mu \ra 
-6 *_7 \left( F_1 \wedge ( i (e_\mu) F_2) \wedge F_2 \wedge \varphi \right). 
\end{aligned}
\]
The second term is computed as 
\[
\begin{aligned}
- 6 *_7 \left( F_1 \wedge ( i (e_\mu) F_2) \wedge F_2 \wedge \varphi \right)
&= 6 \la  i (e_\mu) F_2, *_7 \left( F_1 \wedge F_2 \wedge \varphi \right) \ra \\
&= 
- 6 *_7 \left( *_7 \left( F_1 \wedge F_2 \wedge \varphi \right) \wedge e^\mu \wedge *_7 F_2 \right) \\
&=
6 \la *_7 \left( *_7 \left( F_1 \wedge F_2 \wedge \varphi \right) \wedge *_7 F_2 \right), e^\mu \ra. 
\end{aligned}
\]
Summarizing these equations, we obtain 
\begin{equation} \label{eq:F3 proj}
2 \pi^2_7 (*_8 F^3)
= \lambda^2 
\left( *_7 \left( F_2^3 + 3 *_7 (F_2^2 \wedge \varphi ) *_7 F_1
+ 6 *_7 \left( F_1 \wedge F_2 \wedge \varphi \right) \wedge *_7 F_2\right) \right). 
\end{equation}
Then, by \eqref{eq:*F proj} and \eqref{eq:F3 proj}, it follows that 
\begin{equation*}
\begin{aligned}
&2 \pi^2_7 \left(F - \frac{1}{6} *_8 F^3 \right) \\
=& \lambda^2 
\left( F_1 + *_7 (F_2 \wedge *_7 \varphi) \right. \\
& \left. - \frac{1}{6} 
*_7 \left( F_2^3 + 3 *_7 (F_2^2 \wedge \varphi ) *_7 F_1 
+ 6 *_7 \left( F_1 \wedge F_2 \wedge \varphi \right) \wedge *_7 F_2 \right) \right) \\
=&
\lambda^2 \left(
*_7 \left( *_7 \varphi \wedge F_2 - \frac{1}{6} F_2^3 
+\left( 1-\frac{1}{2} *_7 \left( \varphi \wedge F_2^2 \right) \right) *_7 F_1 \right. \right.\\
&\left. \left. - *_7 (F_1 \wedge F_2 \wedge \varphi) \wedge *_7 F_2
\right) \right), 
\end{aligned}
\end{equation*}
which implies the first equation of Lemma \ref{lem:2747}.

Next, we compute $\pi^4_7 (F^2)$. 
By \eqref{eq:Spin7 proj 1}, we have 
$$
\pi^4_7 (F^2)
= \sum_{\mu=1}^7 \la F^2, \lambda^4 (e^\mu) \ra \cdot \lambda^4 (e^\mu). 
$$ 
For $1 \leq \mu \leq 7$, we have by \eqref{def:lambda}
\[
\begin{aligned}
\sqrt{8} \la F^2, \lambda^4 (e^\mu) \ra 
=&   
\la 2 e^0 \wedge F_1 \wedge F_2 + F_2^2, 
e^0 \wedge i (e_\mu) \ast_7 \varphi - e^\mu \wedge \varphi \ra \\
=&
2 \la F_1 \wedge F_2, i (e_\mu) \ast_7 \varphi \ra - \la F_2^2, e^\mu \wedge \varphi \ra
\end{aligned}
\]
and 
\[
\begin{aligned}
2 \la F_1 \wedge F_2, i (e_\mu) \ast_7 \varphi \ra
=&
- 2 *_7 \left( F_1 \wedge F_2 \wedge e^\mu \wedge \varphi \right) \\
=& 
2 \la *_7 \left( F_1 \wedge F_2 \wedge \varphi \right), e^\mu \ra, \\
- \la F_2^2, e^\mu \wedge \varphi \ra
=&
- *_7 (e^\mu \wedge \varphi \wedge *_7 F_2^2)
=
- \la *_7 \left( \varphi \wedge *_7 F_2^2 \right), e^\mu \ra. 
\end{aligned}
\]
Hence, we obtain the second equation of Lemma \ref{lem:2747}. 
\end{proof}

Then, by \eqref{eq:Cayley 2} and Lemma \ref{lem:2747}, we obtain 
\begin{align}
*_7 J_1 &= 2 (\lambda^2)^{-1} \left( \pi^2_7 \left( -F + \frac{1}{6} *_8 F^3 \right) \right), \label{eq:CayleyJ1} \\  
*_7 J_2 &= - \frac{\sqrt{8}}{2} (\lambda^4)^{-1} \pi^4_7 (F^2). \label{eq:CayleyJ2}
\end{align}
Hence, by \eqref{eq:CayleyJ1} and \eqref{eq:CayleyJ2}, 
we see that 
the graph $S$ is Cayley with an appropriate orientation if and only if
$\pi^2_7 \left( F - *_8 F^3/6 \right) = 0$ and $\pi^4_7 (F^2) = 0$. 
\end{proof}

Before going further, we rewrite the Cayley equality 
\cite[Chapter I\hspace{-.1em}V,Theorem 1.28]{HL}. 
This is very useful because 
Lemma \ref{lem:Cayley HL} implies an identity that will hold in more general settings as in Lemma \ref{lem:associator HL}. 
We show that it indeed holds generally and gives many applications. For more details, see \cite{KYvol}.

\begin{lemma} \label{lem:Cayley HL}
We have 
\begin{align*}
&\left( 1+ \frac{1}{2} \la (F_\n^S)^2, \Phi \ra + \frac{*_8 (F_\n^S)^4}{24}  \right)^2 
+
4 \left| \pi^2_7 \left( F_\n^S + \frac{1}{6} *_8 (F_\n^S)^3\right) \right|^2
+
2 \left| \pi^4_7 \left( (F_\n^S)^2 \right) \right|^2 \\
=&
\det (\id_{TX} + (-\i F_\n^S)^\sharp), 
\end{align*}
where 
$(-\i F^S_\n)^\sharp$ is a skew symmetric endomorphism of $TX$
defined by 
$$
\la (-\i F^S_\n)^\sharp u, v \ra = -\i F^S_\n (u,v) \qquad \mbox{for } u,v \in TX. 
$$
\end{lemma}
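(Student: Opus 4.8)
The plan is to follow the strategy of Lemma \ref{lem:associator HL}, replacing the associator equality by the Cayley equality of Lemma \ref{lem:Cayley id}. I would define $\iota : B \to X = B \times T^4$ by $\iota (x) = (x, f(x))$ and set $v_i = \iota_* (\p_i)$ for $i = 0, 1, 2, 3$, so that $\{\, v_0, v_1, v_2, v_3 \,\}$ spans $T_p S$. Applying Lemma \ref{lem:Cayley id} to these vectors yields
\[
\Phi (v_0, v_1, v_2, v_3)^2 + 8 |\tau (v_0, v_1, v_2, v_3)|^2 = |v_0 \wedge v_1 \wedge v_2 \wedge v_3|^2,
\]
so it suffices to identify each of the three terms with the corresponding term in the statement.

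For the middle term I would use the expansion $\tau = \sum_{\mu=1}^7 \lambda^4 (e^\mu) \otimes \lambda^4 (e^\mu)$ with $\{\, e^\mu \,\}_{\mu=1}^7$ an orthonormal basis of $V^*$. Since $\lambda^4$ is an isometry, the forms $\lambda^4 (e^\mu)$ are orthonormal, whence $8 |\tau (v_0, v_1, v_2, v_3)|^2 = \sum_{\mu} \left( \sqrt{8}\, \lambda^4 (e^\mu) (v_0, v_1, v_2, v_3) \right)^2$. By \eqref{eq:Cayley lam4} this equals $\sum_\mu \la J_1 + J_2, *_7 e^\mu \ra^2 = |J_1 + J_2|^2$, and because $*_7 J_1$ is a combination of the $dy^a$ while $*_7 J_2$ is a combination of the $dx^i$, the two are orthogonal, so $|J_1 + J_2|^2 = |J_1|^2 + |J_2|^2$. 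Now \eqref{eq:CayleyJ1} and \eqref{eq:CayleyJ2}, together with the fact that $\lambda^2, \lambda^4$ and $*_7$ are isometries, give $|J_1|^2 = 4 | \pi^2_7 ( F - \tfrac16 *_8 F^3 ) |^2$ and $|J_2|^2 = 2 | \pi^4_7 ( F^2 ) |^2$, where $F = -\i F_\n^S$. Since $F - \tfrac16 *_8 F^3 = -\i ( F_\n^S + \tfrac16 *_8 (F_\n^S)^3 )$ and $F^2 = -(F_\n^S)^2$, the imaginary factors disappear under $| \cdot |^2$, and the middle term becomes precisely $4 | \pi^2_7 ( F_\n^S + \tfrac16 *_8 (F_\n^S)^3 ) |^2 + 2 | \pi^4_7 ( (F_\n^S)^2 ) |^2$.

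The term $|v_0 \wedge v_1 \wedge v_2 \wedge v_3|^2$ is handled by the eigenvalue argument of Lemma \ref{lem:associator HL}: writing $A = ( \p f^a / \p x^i )$, now a $4 \times 4$ matrix, one has $|v_0 \wedge v_1 \wedge v_2 \wedge v_3|^2 = \det ( \id_4 + {}^t\! A A )$, and comparing the eigenvalues $\{\, \pm \i \mu_i \,\}$ of $(-\i F_\n^S)^\sharp = \left( \begin{smallmatrix} 0 & -{}^t\! A \\ A & 0 \end{smallmatrix} \right)$ with those of ${}^t\! A A$ gives $\det ( \id_{TX} + (-\i F_\n^S)^\sharp ) = \prod_i (1 + \mu_i^2) = \det ( \id_4 + {}^t\! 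A A )$. The remaining and main task is the first term, for which I would compute $\iota^* \Phi$ from \eqref{eq:FM Cayley} using $\iota^* dx^i = dx^i$ and $\iota^* dy^a = df^a$. The summand $dx^{0123}$ contributes $1$, and the summand $\sum_i \tau_i \wedge \om_i$ contributes $\tfrac{1}{2} \la (F_\n^S)^2, \Phi \ra$, exactly as the analogous $G_2$ computation \eqref{eq:associator HL1} produced its quadratic term. The genuinely new feature, and the step I expect to be the main obstacle, is the quartic summand $\iota^* (dy^{4567}) = df^{4567}$, which has no analogue in the $3$-dimensional $G_2$ setting: one must show that, as a multiple of $dx^{0123}$, it equals $\tfrac{1}{24} *_8 (F_\n^S)^4$. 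This should follow from $(F_\n^S)^4 = 4!\, df^4 \wedge dy^4 \wedge \cdots \wedge df^7 \wedge dy^7 = 24\, df^{4567} \wedge dy^{4567}$ together with $*_8 ( dx^{0123} \wedge dy^{4567} ) = 1$, but keeping track of the reordering signs and the factor $24 = 4!$ is where the care is needed.
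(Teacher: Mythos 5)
Your proposal is correct and follows essentially the same route as the paper's proof: apply the Cayley equality of Lemma \ref{lem:Cayley id} to $v_i=\iota_*(\p_i)$, identify $8|\tau(v_0,v_1,v_2,v_3)|^2=|J_1|^2+|J_2|^2$ via \eqref{eq:Cayley lam4}, \eqref{eq:CayleyJ1}, \eqref{eq:CayleyJ2} and the orthogonality of $*_7J_1$ (a combination of $dy^a$'s) and $*_7J_2$ (a combination of $dx^i$'s), and reuse the eigenvalue argument of Lemma \ref{lem:associator HL} for the determinant. Your handling of the quartic term, with $(F_\n^S)^4=24\,df^{4567}\wedge dy^{4567}$, is exactly the paper's computation $\frac{1}{24}*_8 (F_\n^S)^4=*_8\left(df^{4567}\wedge dy^{4567}\right)$, so nothing is missing.
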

\begin{proof}
Define $\iota: B \rightarrow X=B \times T^4$ by 
$\iota (x) = (x, f(x))$. 
Set $v_i = \iota_* (\p_i)$ for $i=0,1,2,3$. 
Then, by the Cayley equality 
\cite[Chapter I\hspace{-.1em}V,Theorem 1.6]{HL}, 
which is equivalent to Lemma \ref{lem:Cayley id}, 
we have 
\begin{align} \label{eq:Cayley HL}
|\iota^* \Phi (\p_0, \p_1,\p_2,\p_3)|^2 + 8 |\tau (v_0, v_1,v_2,v_3)|^2 
= 
|v_0 \wedge v_1 \wedge v_2 \wedge v_3|^2, 
\end{align}
where $\tau$ is defined by \eqref{eq:def tau}. 
Then, 
since $\iota^* dx^i = dx^i$ and $\iota^* dy^a = df^a$, 
\eqref{eq:FM Cayley} implies that 
\begin{align*}
&\iota^* \Phi \\
=& dx^{0123} + df^{4567} 
+ \tau_1 \wedge (df^{45} + df^{67})
+ \tau_2 \wedge (df^{46} + df^{75})
- \tau_3 \wedge (df^{47} + df^{56}) \\
=&
\left( 1+ *_4 (df^{4567}) + \la \tau_1, df^{45} + df^{67} \ra 
+ \la \tau_2, df^{46} + df^{75} \ra - \la \tau_3, df^{47} + df^{56} \ra \right) dx^{0123}, 
\end{align*}
where $*_4$ is the Hodge star on the space spanned by $dy^4, \cdots, dy^7$ 
and $df^{a_1 \cdots a_k}$ is short for $df^{a_1} \wedge \cdots \wedge df^{a_k}$.
On the other hand, since $F_\n^S = \i \sum_{a=4}^7 df^a \wedge dy^a$
by \eqref{eq:FM curv Spin7 0}, 
we have
\begin{align*}
\frac{*_8 (F_\n^S)^4}{24}
=&
\sum_{a,b,c,d=4}^7 *_8 \left(\frac{df^{abcd} \wedge dy^{abcd}}{24} \right)
=
*_8 \left(df^{4567} \wedge dy^{4567} \right)
=
*_4 \left(df^{4567} \right), \\
\la (F^S_\n)^2, \Phi \ra 
=&
\sum_{a,b=4}^7 \sum_{i=1}^3 \la df^{ab} \wedge dy^{ab}, \tau_i \wedge \om_i \ra \\
=&
\sum_{a,b=4}^7 \sum_{i=1}^3 \la df^{ab}, \tau_i \ra \la dy^{ab}, \om_i \ra \\
=&
2 \left( \la \tau_1, df^{45} + df^{67} \ra 
+ \la \tau_2, df^{46} + df^{75} \ra - \la \tau_3, df^{47} + df^{56} \ra \right).  
\end{align*}
Hence, we obtain 
\begin{align} \label{eq:Cayley HL1}
|\iota^* \Phi (\p_0, \p_1,\p_2,\p_3)|^2
=
\left( 1+ \frac{1}{2} \la (F_\n^S)^2, \Phi \ra + \frac{*_8 (F_\n^S)^4}{24}  \right)^2. 
\end{align}

Next, we compute $8 |\tau (v_0, v_1,v_2,v_3)|^2$. 
By \eqref{eq:def tau} and \eqref{eq:Cayley lam4}, 
we have 
\begin{align*} 
8 |\tau (v_0, v_1,v_2,v_3)|^2=
8 \sum_{\mu=1}^7 \left( \lambda^4(e^\mu) (v_0, v_1, v_2, v_3) \right)^2 
=
\sum_{\mu=1}^7 \left \la *_7 J_1 + *_7 J_2, e^\mu \right \ra^2. 
\end{align*}
Recall that $*_7 J_1$ and $*_7 J_2$ are linear combinations of $dy^a$'s 
and $dx^i$'s, respectively, 
and $\lambda^j$ is an isometry by Lemma \ref{lem:lambdas}. 
Then, by \eqref{eq:CayleyJ1} and \eqref{eq:CayleyJ2}, we obtain 
\begin{align} \label{eq:Cayley HL2}
\begin{split}
8 |\tau (v_0, v_1,v_2,v_3)|^2
=&
\sum_{\mu=1}^7 
\left \la *_7 J_1, e^\mu \right \ra^2 + \left \la *_7 J_2, e^\mu \right \ra^2 \\
=&
4 \left| \pi^2_7 \left( F_\n^S + \frac{1}{6} *_8 (F_\n^S)^3\right) \right|^2
+
2 \left| \pi^4_7 \left( (F_\n^S)^2 \right) \right|^2. 
\end{split}
\end{align}
By the same argument as in the proof of Lemma \ref{lem:associator HL}, 
we see that 
$$
|v_0 \wedge v_1 \wedge v_2 \wedge v_3|^2
= \det \left( \id_{TX} + (-\i F^S_\n)^\sharp \right) 
$$ 
and the proof is completed. 
\end{proof}

Using Theorem \ref{thm:FM Spin7}, we obtain the following Theorem \ref{thm:FM Spin7 conn}. 
We first prove the following lemma.

\begin{lemma} \label{lem:Cayley ASD}
Let $U \subset \R^8$ be a Cayley subspace, 
a subspace of $\R^8$ which is a Cayley submanifold. Denote by $U^\perp$ the orthogonal complement of $U$. 
We identify $\Lambda^k U^*$ with the subspace of $\Lambda^k (\R^8)^*$ by 
\[
\Lambda^k U^* = \{\, \alpha \in \Lambda^k (\R^8)^* \mid i(v) \alpha =0 
\mbox{ for any } v \in U^\perp \,\}. 
\]
Then, 
$\alpha \in \Lambda^2 U^*$ is anti-self-dual with respect to the induced metric 
if and only if $\pi^2_7 (\alpha) = 0$. 
\end{lemma}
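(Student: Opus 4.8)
The plan is to exploit the $\Sp$-invariance of both conditions to reduce to a single standard Cayley plane, and then to evaluate the projection $\pi^2_7$ on an explicit self-dual/anti-self-dual basis of $\Lambda^2 U^*$. First I would record that the statement is $\Sp$-invariant: the projection $\pi^2_7$ is $\Sp$-equivariant by construction, and any $g \in \Sp \subset {\rm SO}(8)$ carries an oriented Cayley subspace $U$ to an oriented Cayley subspace $gU$ isometrically and compatibly with the Cayley orientations (which are characterized by $\Phi = {\rm vol}$, a condition $g$ preserves), so $g$ intertwines the Hodge star operators on $U$ and $gU$ and hence the notions of anti-self-duality. Thus it suffices to treat one representative. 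Exactly as in the proof of Lemma \ref{lem:Cayley id}, transitivity of $\Sp$ on orthonormal $3$-frames lets me move an oriented orthonormal basis $u_0,u_1,u_2,u_3$ of $U$ so that $u_0=\p_0,\ u_1=\p_1,\ u_2=\p_2$, after which the stabilizer $\mathrm{SU}(2)$ brings $u_3$ into the form $k\p_3+\ell\p_4$; the Cayley condition together with Lemma \ref{lem:Cayley id} forces $k=1,\ \ell=0$ for the correct orientation, so I may assume $U=\rmspan\{\p_0,\p_1,\p_2,\p_3\}$ with Cayley volume $dx^{0123}$.

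Next I would set up the linear algebra on this $U$. Here $\Lambda^2 U^*=\rmspan\{dx^{ij}\}_{0\le i<j\le 3}$, and with respect to $dx^{0123}$ the self-dual part is spanned by $\tau_1,\tau_2,\tau_3$ (the very forms appearing in \eqref{eq:FM Cayley}), while the anti-self-dual part is spanned by $\beta_i:=dx^{0i}-*_U\, dx^{0i}$, that is, $\beta_1=dx^{01}-dx^{23}$, $\beta_2=dx^{02}-dx^{31}$, $\beta_3=dx^{03}-dx^{12}$. The key step is then to apply the projection formula \eqref{eq:Spin7 proj 2}, namely $\pi^2_7(\alpha)=\frac14\bigl(\alpha+*_8(\Phi\wedge\alpha)\bigr)$, to these six basis vectors. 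For the anti-self-dual $\beta_i$ every cross term $\tau_j\wedge\om_j\wedge\beta_i$ vanishes because $\tau_j\wedge\beta_i=0$ (self-dual wedge anti-self-dual), leaving $\Phi\wedge\beta_i=\beta_i\wedge dy^{4567}=-*_8\beta_i$, whence $\pi^2_7(\beta_i)=0$.

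For the self-dual $\tau_i$ the diagonal term does not drop out: $\tau_i\wedge\om_i\wedge\tau_i=\tau_i\wedge\tau_i\wedge\om_i=2\,dx^{0123}\wedge\om_i$, and a short Hodge-star computation on $U^\perp$ gives $*_8(\Phi\wedge\tau_i)=\tau_i+2\om_i$, so that $\pi^2_7(\tau_i)=\frac12(\tau_i+\om_i)\neq0$. In fact this equals $\lambda^2(dx^i)$ directly from \eqref{def:lambda} and \eqref{eq:FM asso}, since $i(\p_i)\varphi=*_U dx^{0i}+\om_i$ there; this also re-confirms that $\pi^2_7(\tau_i)$ genuinely lies in $\Lambda^2_7 W^*$, as it must.

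Finally, since $\pi^2_7$ is linear and the forms $\om_i$ live in $\Lambda^2(U^\perp)^*$ and are therefore linearly independent from everything in $\Lambda^2 U^*$, the images $\pi^2_7(\tau_1),\pi^2_7(\tau_2),\pi^2_7(\tau_3)$ are linearly independent while $\pi^2_7(\beta_i)=0$. Hence for a general $\alpha=\sum_i a_i\tau_i+\sum_i b_i\beta_i$ one gets $\pi^2_7(\alpha)=\frac12\sum_i a_i(\tau_i+\om_i)$, which vanishes if and only if $a_1=a_2=a_3=0$, i.e.\ if and only if $\alpha$ is anti-self-dual, completing the proof. The main obstacle I anticipate is not any single computation but the reduction step: making the invariance argument airtight, in particular ensuring that the \emph{Cayley orientation} (not merely the induced metric) is transported by $\Sp$ so that ``anti-self-dual'' is truly an $\Sp$-invariant notion, and confirming through Lemma \ref{lem:Cayley id} that the standard plane with volume $dx^{0123}$ really is the unique orbit representative carrying the correct orientation.
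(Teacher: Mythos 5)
Your proof is correct and follows essentially the same route as the paper's: after reducing to the standard Cayley plane where \eqref{eq:FM Cayley} holds (a reduction the paper asserts in one line and you carry out in detail via the orbit argument from the proof of Lemma \ref{lem:Cayley id}), you apply the projection formula \eqref{eq:Spin7 proj 2} and use that the $\tau_i$ span the self-dual $2$-forms on $U$ while the $\om_i$ are linearly independent of $\Lambda^2 U^*$. The only cosmetic difference is that you evaluate $\pi^2_7$ on an explicit self-dual/anti-self-dual basis, whereas the paper computes $4\,\pi^2_7(\alpha) = \alpha + *_4 \alpha + \sum_i \la \alpha, \tau_i \ra\, \om_i$ for a general $\alpha \in \Lambda^2 U^*$ in a single calculation.
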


\begin{proof}
Since $U$ is Cayley, there is an orthonormal basis 
$$\{\, \p/\p x^0, \cdots, \p/\p x^3, \p/\p y^4, \cdots, \p/\p y^7  \,\}$$ 
with its dual 
$\{\, dx^0, \cdots, dx^3, dy^4, \cdots, dy^7  \,\}$ 
such that 
$U$ is spanned by $\p/\p x^0, \cdots, \p/\p x^3$, 
which is positively oriented, 
$U^\perp$ is spanned by $\p/\p y^4, \cdots, \p/\p y^7$
and \eqref{eq:FM Cayley} holds. 

Denote by $*_4$ and $*_8$ the Hodge stars on $U$ and $\R^8$, 
respectively. 
Then, by \eqref{eq:Spin7 proj 2}, we have 
\begin{align*}
4 \pi^2_7 (\alpha) 
=& 
\alpha + *_8 (\Phi \wedge \alpha) \\
=&
\alpha + *_8 \left( dy^{4567} \wedge \alpha + \sum_{j=1}^3 \alpha \wedge \tau_i \wedge \om_i \right) \\
=&
\alpha + *_4 \alpha 
+ *_8 \left( \sum_{j=1}^3 \la \alpha, \tau_i \ra dx^{0123} \wedge \om_i \right)\\
=&
\alpha + *_4 \alpha 
+ \sum_{j=1}^3 \la \alpha, \tau_i \ra \om_i. 
\end{align*}
Since $\{ \p/\p x^0, \cdots, \p/\p x^3 \}$ is positively oriented, 
$\{\, \tau_1, \tau_2, \tau_3 \,\}$ is a basis of the space of self-dual 2-forms on $U$. 
Hence, the proof is completed. 
\end{proof}

\begin{theorem} \label{thm:FM Spin7 conn}
The following conditions are equivalent. 
\begin{enumerate}
\item
The graph $S$ is a Cayley submanifold with an appropriate orientation 
and if we identify $- \i F_\n^B \in \Om^2(B)$ with a 2-form on $S$, 
it is anti-self-dual with respect to the induced metric 
and the orientation which makes $S$ Cayley. 
\item
\[
\pi^2_7 \left( F_\nabla + \frac{1}{6} *_8 F_\nabla^3 \right) =0
\quad\mbox{and}\quad  \pi^4_7  \left(F_\nabla^{2} \right)=0.
\]
\end{enumerate}
\end{theorem}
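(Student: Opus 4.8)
The plan is to follow the pattern of Proposition \ref{prop:FM G2 conn}, separating the contribution of $\n^B$ from that of $S$ by bidegree in $(dx,dy)$, with the crucial difference that in dimension four the curvature of $\n^B$ does not decouple as flatness but as an anti-self-duality condition. Throughout I would write $F:=-\i F_\n = P + Q$, where $P := -\i F_\n^B$ is a combination of $dx^{ij}$'s ($0 \le i,j \le 3$) and $Q := -\i F_\n^S$ is a combination of $dx^i \wedge dy^a$'s, as in \eqref{eq:FM curv Spin7 0}. Since $B$ is $4$-dimensional, $P^2$ is a multiple of $dx^{0123}$, so $P^3 = 0$ and $P^2 \wedge Q = 0$; hence $F^2 = P^2 + 2\,P\wedge Q + Q^2$ and $F^3 = 3\,P\wedge Q^2 + Q^3$. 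As in the proof of Theorem \ref{thm:FM Spin7}, condition (2) is, up to the overall factor $\i$, the pair $\pi^2_7(F - \frac16 \ast_8 F^3) = 0$ and $\pi^4_7(F^2) = 0$, where $F - \frac16 \ast_8 F^3 = P + Q - \frac12 \ast_8(P\wedge Q^2) - \frac16 \ast_8 Q^3$.

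First I would decompose both equations using the orthonormal bases $\{\lambda^2(e^\mu)\}$, $\{\lambda^4(e^\mu)\}$ of $\Lambda^2_7 W^*$, $\Lambda^4_7 W^*$ from \eqref{eq:Spin7 proj 1} and \eqref{def:lambda}, split according to whether $e^\mu$ is one of $dx^1,dx^2,dx^3$ (an ``$x$-index'') or one of $dy^4,\dots,dy^7$ (a ``$y$-index''). A short type count shows $\lambda^2(dx^i) = \frac12(\tau_i + \om_i)$, of bidegree $(2,0)$ plus $(0,2)$, while $\lambda^2(dy^a)$ is of pure bidegree $(1,1)$, and dually for $\lambda^4$. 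Pairing then gives a clean splitting: the $y$-index component of the first equation and the $x$-index component of the second are built purely from $Q$ and, by the computation in the proof of Theorem \ref{thm:FM Spin7}, are exactly its two equations $\pi^2_7(Q - \frac16\ast_8 Q^3)=0$ and $\pi^4_7(Q^2)=0$, while $\pi^4_7(P^2)=0$ automatically (the bidegree $(4,0)$ form pairs with no $\lambda^4(e^\mu)$). Thus by Theorem \ref{thm:FM Spin7} condition (2) is equivalent to ``$S$ is Cayley'' together with the vanishing of the two remaining, $P$-linear, pieces: the $x$-index part of $\pi^2_7(F - \frac16\ast_8 F^3)$ (involving $P$ and $\ast_8(P\wedge Q^2)$) and $\pi^4_7(P\wedge Q)$ (the $y$-index part of $\pi^4_7(F^2)$).

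The decisive step is to identify these $P$-linear pieces, under the assumption that $S$ is Cayley, with $\pi^2_7(\widetilde P)$, where $\widetilde P \in \Lambda^2 W^*$ is the image of the restriction $F_\n^B|_S$ under the isometric embedding $\Lambda^2 T_p^*S \hookrightarrow \Lambda^2 W^*$ of Lemma \ref{lem:Cayley ASD}. I would compute $\widetilde P$ by evaluating $P$ on the tangent frame $v_j = \p_j + (i(\p_j)Q)^\sharp$ of $S$ (as in the proof of Theorem \ref{thm:FM Spin7}) and expand $\pi^2_7(\widetilde P)$ in the $\lambda^2(e^\mu)$ basis: the $x$-index coefficients should reproduce the first remaining piece, and the $y$-index coefficients, transported through the isometry $\lambda^2 \leftrightarrow \lambda^4$ on $V^*$, should reproduce $\pi^4_7(P\wedge Q)$. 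Granting this, Lemma \ref{lem:Cayley ASD} gives that $\pi^2_7(\widetilde P) = 0$ iff $F_\n^B|_S$ is anti-self-dual for the induced metric and the Cayley orientation, which is the second half of condition (1); combined with ``$S$ Cayley'' this yields the equivalence.

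The hard part will be this last matching. Unlike the $G_2$ case, where $B$ is three-dimensional and $\n^B$ had merely to be flat (so the mixed terms vanished identically), here the tilt of $T_pS$ enters $\widetilde P$ through the tangent vectors $v_j$, so the $(0,2)$- and $(1,1)$-parts of $P$ on $T_pS$ genuinely couple $P$ with $Q$; extracting $\pi^4_7(P \wedge Q)$ from the $y$-index part of $\pi^2_7(\widetilde P)$ is the computation with no analogue in the associative setting, and it reconciles the apparent mismatch that $\pi^2_7(\widetilde P)=0$ is seven scalar equations of effective rank three (the three self-dual directions $\tau_i$). I expect the bookkeeping to run parallel to Lemmas \ref{lem:2747} and \ref{lem:Cayley ASD}, with the factors of $\ast_7$ and $\varphi$ appearing there being exactly what makes the two coordinate expressions agree.
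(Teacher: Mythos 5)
Your first two paragraphs reproduce, correctly, the paper's own reduction: by \eqref{eq:FM curv Spin7 0} one has $P^3=0$, $P^2\wedge Q=0$, hence $F^3=3P\wedge Q^2+Q^3$, and the bidegree analysis (the paper phrases it through which monomials the projected forms consist of, you phrase it through the $\lambda^2(e^\mu)$, $\lambda^4(e^\mu)$ coefficients — same count) shows that condition (2) splits into the two equations of Theorem \ref{thm:FM Spin7} for $F^S_\n$ alone, plus the two $P$-linear conditions $\pi^2_7\bigl(F^B_\n+\frac12 *_8 (F^B_\n\wedge (F^S_\n)^2)\bigr)=0$ and $\pi^4_7(F^B_\n\wedge F^S_\n)=0$, with $\pi^4_7((F^B_\n)^2)=0$ automatic. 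Your target form $\widetilde P$ is also the right one and agrees with the paper's: since $P(v_i,v_j)=P(\p_i,\p_j)$, your $\widetilde P$ is exactly $\bigl((\id_{TX}+(F^S)^\sharp)^{-1}\bigr)^* F^B_\n$ of \eqref{eq:FM Spin7 conn 2}, and your endgame — Lemma \ref{lem:Cayley ASD} plus the graph identification $(d\k)_x=(\id_{TX}+(F^S)^\sharp)_*|_{W_0}$ — is precisely how the paper concludes.

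The genuine gap is the step you explicitly grant: the equivalence, for $S$ Cayley, of the two $P$-linear conditions with $\pi^2_7(\widetilde P)=0$. This is the only part of the theorem not already covered by Theorem \ref{thm:FM Spin7} and Lemma \ref{lem:Cayley ASD}, and the paper does not prove it in situ either — it invokes \cite[Theorem A.8 (2)]{KYSpin7} — so your proposal contains no argument for exactly the point where one is needed. Worse, the mechanism you predict (a coefficientwise match: $x$-index coefficients of $\pi^2_7(\widetilde P)$ giving the first condition, $y$-index coefficients giving $\pi^4_7(P\wedge Q)$) cannot hold literally. The coframe of $T_pS$ dual to $\{v_j\}$ and annihilating $T^\perp_p S$ is $u^i=\sum_j c^i_j\, dx^j+\sum_a d^i_a\, dy^a$ with $c=(\id+{}^t\!AA)^{-1}$, $A=(\p f^a/\p x^i)$, so the seven coefficients $\la\widetilde P,\lambda^2(e^\mu)\ra$ are \emph{rational} functions of $df$, while the two conditions extracted from (2) are \emph{polynomial} in $df$, of degrees $2$ and $1$ respectively (via $P\wedge Q^2$ and $P\wedge Q$). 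Your expected identification is consistent at first order in $df$ but breaks at higher order; the two systems of seven linear equations in $F^B_\n$ have the same solution set only after recombination by an invertible $df$-dependent matrix, and establishing that equivalence is precisely the content of the cited theorem. To complete your proof you would have to either reprove that statement (nontrivial bookkeeping with Gram-type factors, not the ``parallel to Lemma \ref{lem:2747}'' computation you anticipate) or cite \cite{KYSpin7} as the paper does.
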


\begin{proof}
By \eqref{eq:FM curv Spin7 0}, we have 
$(F_\n^B)^3=0$ and $(F_\n^B)^2 \wedge F_\n^S = 0$. 
Thus, we have 
$F_\n^3 = 3 F_\n^B \wedge (F_\n^S)^2 + (F_\n^S)^3$.  
Hence, 
\begin{align*}
\pi^2_7 \left( F_\nabla + \frac{1}{6} *_8 F_\nabla^3 \right)
=
\pi^2_7 \left( \left( F^S_\nabla + \frac{1}{6} *_8 (F^S_\nabla)^3 \right) 
+ \left( F^B_\nabla + \frac{1}{2} *_8 \left(F^B_\nabla \wedge (F^S_\n)^2 \right) \right)
\right). 
\end{align*}
Note that 
$F^S_\nabla + *_8 (F^S_\nabla)^3/6$, 
$F^B_\nabla$ and $*_8 \left(F^B_\nabla \wedge (F^S_\n)^2/2 \right)$
are linear combinations of $dx^i \wedge dy^a$'s, 
$dx^{ij}$'s and $dy^{ab}$'s, respectively. 
Then, by \eqref{eq:Spin7 proj 2} and \eqref{eq:FM Cayley}, 
the first term $\pi^2_7\left(F^S_\nabla + *_8 (F^S_\nabla)^3/6 \right)$
is a linear combination of $dx^i \wedge dy^a$'s 
and the second term 
$\pi^2_7 \left(F^B_\nabla + *_8 \left( F^B_\nabla \wedge (F^S_\n)^2/2 \right) \right)$
is that of $dx^{ij}$'s and $dy^{ab}$'s. 
Hence, 
$\pi^2_7 \left( F_\nabla + *_8 F_\nabla^3/6 \right)=0$ 
if and only if 
\begin{equation} \label{eq:Cayley 5}
\pi^2_7\left(F^S_\nabla + \frac{1}{6} *_8 (F^S_\nabla)^3 \right) =0, \quad 
\pi^2_7 \left(F^B_\nabla + \frac{1}{2} *_8 \left( F^B_\nabla \wedge (F^S_\n)^2 \right) \right)=0. 
\end{equation}

Next, we consider $\pi^4_7  \left(F_\nabla^{2} \right)
= \pi^4_7  \left((F^B_\n)^2 + 2 F^B_\n \wedge F^S_\n + (F^S_\n)^2 \right)=0$. 
By the definition of $\lambda^4$ in \eqref{def:lambda}, \eqref{eq:FM asso} and \eqref{eq:FM coasso}, 
$\lambda^4(dx^i)$ is a linear combination of $dx^{jk} \wedge dy^{bc}$'s 
for each $1 \leq i \leq 3$, 
and 
$\lambda^4(dy^a)$ is a linear combination of $dx^{j} \wedge dy^{bcd}$'s 
and $dx^{j k \l} \wedge dy^{b}$'s for each $4 \leq a \leq 7$. 
By \eqref{eq:FM curv Spin7 0}, 
$(F^B_\n)^2, F^B_\n \wedge F^S_\n$ and $(F^S_\n)^2$ 
are linear combinations of 
$dx^{0123}$, $dx^{ijk} \wedge dy^a$'s  and $dx^{ij} \wedge dy^{ab}$'s,  respectively. 
Hence, 
we have $\pi^4_7 \left((F^B_\n)^2 \right)=0$ and 
$\pi^4_7  \left(F_\nabla^{2} \right)=0$ if and only if
\begin{align} \label{eq:Cayley 6}
\pi^4_7  \left((F^S_\n)^2 \right)=0, \quad 
\pi^4_7  \left(F^B_\n \wedge F^S_\n \right)=0. 
\end{align}

The first equations of \eqref{eq:Cayley 5} and \eqref{eq:Cayley 6} 
are equivalent to saying that 
$S$ is a Cayley submanifold with an appropriate orientation by Theorem \ref{thm:FM Spin7}. 
Thus, assuming that $S$ is a Cayley submanifold, 
we may show that 
$\pi^2_7 \left(F^B_\nabla + *_8 \left( F^B_\nabla \wedge (F^S_\n)^2/2 \right) \right)=0$ 
and 
$\pi^4_7  \left(F^B_\n \wedge F^S_\n \right)=0$ 
if and only if 
$-\i F_\n^B$ is anti-self-dual with respect to the induced metric 
and the orientation which makes $S$ Cayley. 
For simplicity, set 
$$
(F^S)^\sharp = (-\i F^S_\n)^\sharp, 
$$
where $(-\i F^S_\n)^\sharp$ is defined in Lemma \ref{lem:Cayley HL}. 
Then,  assuming that $S$ is a Cayley submanifold, 
$\pi^2_7 \left( F^B_\nabla + *_8 \left( F^B_\nabla \wedge (F^S_\n)^2/2 \right) \right)=0$ 
and 
$\pi^4_7  \left(F^B_\n \wedge F^S_\n \right)=0$ 
if and only if 
\begin{align}\label{eq:FM Spin7 conn 2}
\pi^2_7 \left( \left( (\id_{TX} + (F^S)^\sharp)^{-1} \right)^* F^B_\n \right) = 0
\end{align}
by \cite[Theorem A.8 (2)]{KYSpin7}.

Now, we observe \eqref{eq:FM Spin7 conn 2} pointwisely.
Fix $x \in B$ and regard $(F^S)^\sharp = (F^S)^\sharp_{(x, f(x))} \in {\rm End}(T_{(x, f(x))} X) 
\cong  {\rm End}(\R^8)$. 
By the definition of $(F^S)^\sharp$, we see that 
\[
\begin{aligned}
(\id_{TX}+(F^S)^\sharp)_* \left(\frac{\p}{\p x^i} \right) 
&= \frac{\p}{\p x^i} + \sum_{a=4}^7 \frac{\p f^a}{\p x^i} (x) \frac{\p}{\p y^a}, \\
(\id_{TX}+(F^S)^\sharp)_* \left(\frac{\p}{\p y^a} \right) 
&= \frac{\p}{\p y^a} - \sum_{i=0}^3 \frac{\p f^a}{\p x^i} (x) \frac{\p}{\p x^i}.
\end{aligned}
\]
Then, we can regard 
$(\id_{TX}+(F^S)^\sharp)_* \left(\p/\p x^i \right)$ as an element of $T_{(x, f(x))} S$ 
and 
$(\id_{TX}+(F^S)^\sharp)_* \left(\p/\p y^a \right)$ as an element of $T^\perp_{(x, f(x))}S$. 
Moreover, 
$(\id_{TX}+(F^S)^\sharp)_*|_{W_0}: W_0 \rightarrow T_{(x, f(x))} S$ 
and 
$(\id_{TX}+(F^S)^\sharp)_*|_{V_0}: V_0 \rightarrow T^\perp_{(x, f(x))} S$ 
are isomorphisms, 
where $W_0$ and $V_0$ are subspaces of $\R^8$ 
spanned by $\p/\p x^0, \cdots, \p/\p x^3$ and $\p/\p y^4, \cdots, \p/\p y^7$, 
respectively. 
Since $F^B_\n$ is a linear combination of $dx^{i j}$'s, 
we see that 
\[((\id_{TX}+(F^S)^\sharp)^{-1})^* (-\i F_\n^B) \in \Lambda^2 T^*_{(x, f(x))} S\]
in the sense of Lemma \ref{lem:Cayley ASD}. 
Then, by Lemma \ref{lem:Cayley ASD}, 
\eqref{eq:FM Spin7 conn 2} holds if and only if 
$((\id_{TX}+(F^S)^\sharp)^{-1})^* (-\i F_\n^B) \in \Lambda^2 T^*_{(x, f(x))} S$ 
is anti-self-dual with respect to the induced metric 
and the orientation which makes $S$ Cayley. 

Since the identification between $B$ and $S$ is given by 
$\k: B \ni x \mapsto (x,f(x)) \in S$ 
and $(d \k)_x = (\id_{TX}+(F^S)^\sharp)_*|_{W_0}$, 
where we identify $T_x B$ with $W_0$, 
we obtain the desired statement. 
\end{proof}
\section{The real Fourier--Mukai transform for associative $T^3$-fibrations}
\label{sec:dDT G2 2}
In this section, 
we compute the real Fourier--Mukai transform 
of coassociative cycles using Theorems \ref{thm:FM Spin7} and \ref{thm:FM Spin7 conn}. 
It turns out that 
the real Fourier--Mukai transform of an associative cycle coincides with that of a coassociative cycle as stated in \cite{LL}.

Let $B\subset\mathbb{R}^4$ be an open set with coordinates $(y^{4},y^{5},y^{6},y^{7})$
and 
$f=(f^{1},f^{2},f^{3}):B\to T^{3}$ be a smooth function with values in $T^{3}$, 
where we use coordinates $(x^{1},x^{2},x^{3})$ for $T^{3}$. 
Put
\[S:=\{\,(y,f(y))\mid y \in B\,\}\]
the graph of $f$, a 4-dimensional submanifold in $X:=B\times T^{3}$. 
The manifold $X$ admits a $G_2$-structure $\varphi$ 
with its Hodge dual $* \varphi = *_7 \varphi$ as in \eqref{eq:FM asso} and \eqref{eq:FM coasso}. 
Let 
\[
\n^B = d + \i \sum_{a=4}^7 A^a dy^a
\]
be a Hermitian connection of a trivial complex line bundle $B \times \C \to B$, 
where 
$A^j : B \to \R$ is a smooth function. 

Next, we consider the mirror side. 
The real Fourier--Mukai transform of 
$(S, \n^B)$ is the connection on $X^{*}(\cong X)$ defined by 
\[
\nabla:=d +\i \sum_{a=4}^7 A^{a} dy^{a} + \i \sum_{j=1}^3 f^{j} dx^j. 
\]
Then, its curvature 2-form $F_\nabla$ is given by $F_\nabla = F_\n^B + F_\n^S$, where 
\begin{equation} \label{eq:FM curv 2}
F_\n^B = \i  \sum_{a,b=4}^7 \frac{\partial A^{b}}{\partial y^{a}} dy^a \wedge dy^b, \quad
F_\n^S = \i \sum_{j=1}^3 \sum_{a=4}^7 \frac{\partial f^{j}}{\partial y^{a}} dy^a \wedge dx^j. 
\end{equation}

We first describe the condition for $S$ to be coassociative in terms of $F_\n^S$.

\begin{proposition} \label{prop:FM G2 2}
The following conditions are equivalent. 
\begin{enumerate}
\item
The graph $S$ is a coassociative submanifold with an appropriate orientation. 
\item
$(F_\n^S)^3/6 + F_\n^S \wedge \ast\varphi=0$. 
\item
$(F_\n^S)^3/6 + F_\n^S \wedge \ast\varphi=0
\quad\mbox{and}\quad \varphi\wedge \ast (F_\n^S)^{2}=0.$
\end{enumerate}
\end{proposition}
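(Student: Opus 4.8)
The plan is to separate the algebraic content from the geometric content: I would first dispose of $(2)\Leftrightarrow(3)$ and then prove $(1)\Leftrightarrow(2)$, so that all the work involving the graph lives in the second equivalence. For $(2)\Leftrightarrow(3)$ the argument is identical to that of Proposition \ref{prop:FM G2}: the implication $(3)\Rightarrow(2)$ is trivial, and the converse is the purely algebraic statement about $2$-forms on a $G_2$-manifold recorded in \cite[Remark 3.3]{KY}, which makes no reference to the graph structure and so applies verbatim. It therefore remains to match the single equation in $(2)$ with coassociativity. For this I would invoke Lemma \ref{lem:asso coasso char}: writing $\iota\colon B\to X$, $\iota(y)=(y,f(y))$, the graph $S$ is coassociative with an appropriate orientation if and only if $\iota^*\varphi=0$. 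Since $\iota^*dy^a=dy^a$ and $\iota^*dx^i=df^i$, formula \eqref{eq:FM asso} gives
\[
\iota^*\varphi=df^{123}+\sum_{i=1}^3 df^i\wedge\om_i,
\]
a $3$-form on the $4$-dimensional base $B$.

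The computational heart is to evaluate $\tfrac16(F^S_\n)^3+F^S_\n\wedge*\varphi$ in terms of $f$. By \eqref{eq:FM curv 2} one has $F^S_\n=\i\sum_{j=1}^3 df^j\wedge dx^j$. For the cubic term only the fully distinct monomial survives, and a short reordering gives $\tfrac16(F^S_\n)^3=-\i\,dx^{123}\wedge df^{123}$. For the linear term I would expand $F^S_\n\wedge*\varphi$ using \eqref{eq:FM coasso}: the summand $dy^{4567}$ contributes nothing, because $df^j\wedge dy^{4567}=0$ on the four-dimensional $dy$-space, while the summand $\sum_k dx^{k,k+1}\wedge\om_{k+2}$ forces $j=k+2$ and, after reindexing, yields $F^S_\n\wedge*\varphi=-\i\,dx^{123}\wedge\sum_{i=1}^3 df^i\wedge\om_i$. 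Adding the two contributions I expect
\[
\frac16(F^S_\n)^3+F^S_\n\wedge*\varphi=-\i\,dx^{123}\wedge\iota^*\varphi.
\]

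Since wedging with $dx^{123}$ is injective on forms built from $dy^4,\dots,dy^7$, the left-hand side vanishes if and only if $\iota^*\varphi=0$, which by Lemma \ref{lem:asso coasso char} is exactly condition $(1)$. Combined with $(2)\Leftrightarrow(3)$, this closes the chain $(1)\Leftrightarrow(2)\Leftrightarrow(3)$. The main obstacle I anticipate is purely the sign- and index-bookkeeping in the displayed identity: tracking the factors of $\i$ from $F^S_\n=\i\,\sum df^j\wedge dx^j$ (which affect the cubic and linear terms consistently), reordering the wedge factors $df^j$ past $dx^j$, and unwinding the cyclic conventions hidden in $\om_i$ and in $\sum_k dx^{k,k+1}\wedge\om_{k+2}$, together with confirming that no unexpected monomial (in particular none involving $dy^{4567}$) survives. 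Conceptually, though, the computation should be strictly simpler than the associative case of Proposition \ref{prop:FM G2}, since coassociativity is cut out by the single condition $\iota^*\varphi=0$ rather than a pair of tensor equations; this is precisely why the real Fourier--Mukai transform of a coassociative cycle is governed by the same equation as that of an associative cycle.
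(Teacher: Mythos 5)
Your proposal is correct, but it proves $(1)\Leftrightarrow(2)$ by a genuinely different route than the paper. The paper deduces Proposition \ref{prop:FM G2 2} from the ${\rm Spin}(7)$ theory: it fixes a point of $S^1$, embeds $B\times T^3$ into $B\times T^4$, observes that $S$ is coassociative if and only if its image is Cayley for $\Phi = dx^0\wedge\varphi + *_7\varphi$, invokes Theorem \ref{thm:FM Spin7}, and then unwinds the resulting conditions $\pi^6_7\left(*_8 F_\n^S + (F_\n^S)^3/6\right)=0$ and $\pi^4_7\left((F_\n^S)^2\right)=0$ into the $G_2$ equations using \eqref{eq:Spin7 proj 3}, \eqref{eq:Hodge 78} and \cite[Lemma 3.2]{KY}; in effect it establishes $(1)\Leftrightarrow(3)$ first and only then appeals to \cite[Remark 3.3]{KY}. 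You instead verify $(1)\Leftrightarrow(2)$ directly from Lemma \ref{lem:asso coasso char} via the identity
\[
\frac{1}{6}(F_\n^S)^3 + F_\n^S\wedge *\varphi \;=\; -\i\, dx^{123}\wedge \iota^*\varphi,
\]
and your bookkeeping checks out: with $F_\n^S = \i\sum_j df^j\wedge dx^j$ from \eqref{eq:FM curv 2}, the squares $(df^j\wedge dx^j)^2$ vanish, so $(F_\n^S)^3/6 = \i^3\, (df^1\wedge dx^1)\wedge(df^2\wedge dx^2)\wedge(df^3\wedge dx^3) = -\i\, dx^{123}\wedge df^{123}$ (three transpositions plus the $3\times 3$ block swap give the sign), while $dx^{k+2}\wedge dx^{k,k+1}=dx^{123}$ and $df^j\wedge dy^{4567}=0$ yield $F_\n^S\wedge *\varphi = -\i\, dx^{123}\wedge \sum_i df^i\wedge\om_i$; since $\iota^*\varphi$ involves only the $dy^a$'s, wedging with $dx^{123}$ is injective on it, closing the equivalence. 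Your treatment of $(2)\Leftrightarrow(3)$ via \cite[Remark 3.3]{KY} coincides with the paper's. What each route buys: yours is more elementary and self-contained, needing neither Theorem \ref{thm:FM Spin7} nor \cite[Lemma 3.2]{KY}, and it exploits exactly the feature you identified --- coassociativity is cut out by the single condition $\iota^*\varphi=0$; the paper's derivation, by contrast, is deliberately routed through the Cayley theorem because the point of Section \ref{sec:dDT G2 2} is to exhibit the consistency of the proposed ${\rm Spin}(7)$-dDT equations with the $G_2$ picture, a compatibility your direct computation does not display.
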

Thus, we obtain the same equations as in Proposition \ref{prop:FM G2}. 

\begin{proof}
Since (3) obviously implies (2) and the converse holds by \cite[Remark 3.3]{KY}, 
(2) and (3) are equivalent. 
We show the equivalence of (1) and (3). 
Fixing $* \in S^1$, 
we have an embedding 
\[
\iota: B \times T^3 \cong B \times \{\, \ast \,\} \times T^3 \hookrightarrow B \times T^4.
\]
Let $(x^0, x^1, x^2, x^3)$ be coordinates for $T^4$. 
We canonically identify $F_\nabla^S$ on $B \times (T^3)^*$ 
with a 2-form on $B \times (T^4)^*$ such that $ i(\p/\p x^0) F_\nabla^S =0$. 

The manifold $B \times T^4$ admits a ${\rm Spin}(7)$-structure $\Phi$ given by 
\[
\Phi = dx^0 \wedge \varphi + *_7 \varphi, 
\]
and the graph $S$ is coassociative if and only if $\iota(S)$ is Cayley. 
Then, Theorem \ref{thm:FM Spin7} implies that 
$S$ is a coassociative submanifold with an appropriate orientation if and only if 
\[
\pi^6_7 \left( *_8 F_\nabla^S + (F_\nabla^S)^3/6 \right) =0
\quad\mbox{and}\quad  \pi^4_7  \left((F_\nabla^S)^{2} \right)=0.
\]
We describe these equations in terms of the $G_2$-structure $\varphi$ on $B \times T^3$. 
By \eqref{eq:Spin7 proj 3} and \eqref{eq:Hodge 78}, we have 
\[
\begin{aligned}
4 \pi^6_7 \left( *_8 F_\nabla^S \right) 
&=
*_8 F_\nabla^S + \Phi \wedge F_\nabla^S \\
&=
dx^0 \wedge *_7 F_\nabla^S + (dx^0 \wedge \varphi + *_7 \varphi) \wedge F_\nabla^S \\
&=
dx^0 \wedge (*_7 F_\nabla^S + \varphi \wedge F_\nabla^S ) + *_7 \varphi \wedge F_\nabla^S, \\
4 \pi^6_7 \left( (F_\nabla^S)^3 \right) 
&=
(F_\nabla^S)^3 + \Phi \wedge *_8 (F_\nabla^S)^3 \\
&=
(F_\nabla^S)^3 + (dx^0 \wedge \varphi + *_7 \varphi) \wedge dx^0 \wedge *_7 (F_\nabla^S)^3 \\
&=
dx^0 \wedge *_7 F_\nabla^3 \wedge *_7 \varphi + (F_\nabla^S)^3. 
\end{aligned}
\]
Hence, $\pi^6_7 \left( *_8 F_\nabla^S + (F_\nabla^S)^3/6 \right) =0$ is equivalent to 
\[
*_7 F_\nabla^S + \varphi \wedge F_\nabla^S + \frac{1}{6} *_7 (F_\nabla^S)^3 \wedge *_7 \varphi = 0, 
\quad 
*_7 \varphi \wedge F_\nabla^S + \frac{1}{6} (F_\nabla^S)^3 =0. 
\]
Since these two equations are equivalent by \cite[Lemma 3.2]{KY}, 
$\pi^6_7 \left( *_8 F_\nabla^S + (F_\nabla^S)^3/6 \right) =0$ is equivalent to 
$*_7 \varphi \wedge F_\nabla^S + (F_\nabla^S)^3/6 =0$. 

Next, we consider $\pi^4_7  \left((F_\nabla^S)^{2} \right)=0$. 
By Lemma \ref{lem:lambdas}, $\pi^4_7  \left((F_\nabla^S)^2 \right)=0$ if and only if
\[
\la dx^0 \wedge i (\alpha^\sharp) \ast_7 \varphi - \alpha \wedge \varphi, (F_\nabla^S)^2 \ra =0
\]
for any $\alpha \in \Om^1(B \times T^3)$. 
Since $ i(\p/\p x^0) F_\nabla^S =0$, this is equivalent to 
\begin{align*}
0 
= \la \alpha \wedge \varphi, (F_\nabla^S)^2 \ra 
= *_7 (\alpha \wedge \varphi \wedge *_7 (F_\nabla^S)^2)
= \la \alpha, *_7 (\varphi \wedge *_7 (F_\nabla^S)^2) \ra.
\end{align*}
Hence, the proof is completed by \cite[Remark 3.3]{KY}. 
\end{proof}

Similarly, we obtain the following Proposition \ref{prop:FM G2 conn 2} from Theorem \ref{thm:FM Spin7 conn}. 

\begin{proposition} \label{prop:FM G2 conn 2}
The following conditions are equivalent. 
\begin{enumerate}
\item
The graph $S$ is a coassociative submanifold with an appropriate orientation 
and if we identify $-\i F_\n^B \in \Om^2(B)$ with a 2-form on $S$, 
it is anti-self-dual with respect to the induced metric 
and the orientation which makes $S$ coassociative. 
\item
$F_\n^3/6 + F_\n \wedge \ast\varphi=0$. 
\item
$F_\n^3/6 + F_\n \wedge \ast\varphi=0
\quad\mbox{and}\quad \varphi\wedge \ast F_\n^{2}=0.$
\end{enumerate}
\end{proposition}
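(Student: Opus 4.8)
The plan is to lift the statement to the eight-dimensional Cayley setting and apply Theorem~\ref{thm:FM Spin7 conn}, exactly as Proposition~\ref{prop:FM G2 2} applied Theorem~\ref{thm:FM Spin7}. First I would dispatch the equivalence $(2)\Leftrightarrow(3)$: the implication $(3)\Rightarrow(2)$ is immediate, and $(2)\Rightarrow(3)$ is \cite[Remark 3.3]{KY}, precisely as in Propositions~\ref{prop:FM G2} and~\ref{prop:FM G2 2}. It then remains to establish $(1)\Leftrightarrow(3)$.

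To do so I would fix $\ast\in S^1$ and use the embedding $\iota\colon B\times T^3\cong B\times\{\ast\}\times T^3\hookrightarrow B\times T^4$ with $T^4$-coordinates $(x^0,x^1,x^2,x^3)$, endowing $B\times T^4$ with the ${\rm Spin}(7)$-structure $\Phi=dx^0\wedge\varphi+\ast_7\varphi$. Under $\iota$ the curvature $F_\n=F_\n^B+F_\n^S$ is regarded as a $2$-form on $B\times(T^4)^\ast$ with $i(\p/\p x^0)F_\n=0$. The graph $S$ is coassociative with the appropriate orientation exactly when $\iota(S)$ is Cayley, and because $\iota$ is an isometric embedding carrying the coassociative orientation of $S$ to the Cayley orientation of $\iota(S)$, the form $-\i F_\n^B$ is anti-self-dual on $S$ if and only if it is anti-self-dual on $\iota(S)$. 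Thus condition $(1)$ coincides with condition~$(1)$ of Theorem~\ref{thm:FM Spin7 conn} for the pair $(\iota(S),\n^B)$, and that theorem renders $(1)$ equivalent to $\pi^2_7\left(F_\n+\frac{1}{6}\ast_8 F_\n^3\right)=0$ together with $\pi^4_7(F_\n^2)=0$.

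The remaining step is to rewrite these two ${\rm Spin}(7)$ conditions in $G_2$ terms on $B\times T^3$, which I would do by repeating the computation in the proof of Proposition~\ref{prop:FM G2 2} with $F_\n^S$ replaced throughout by $F_\n$; that computation uses only $i(\p/\p x^0)F_\n=0$, which still holds. Since $\ast_8$ is an isometry $\Lambda^2_7\to\Lambda^6_7$ commuting with the projections, the first equation becomes $\pi^6_7\left(\ast_8 F_\n+\frac{1}{6} F_\n^3\right)=0$; expanding via \eqref{eq:Spin7 proj 3} and \eqref{eq:Hodge 78} splits it into a $dx^0$-component and a complementary one, which by \cite[Lemma 3.2]{KY} are equivalent and reduce to $\ast_7\varphi\wedge F_\n+\frac{1}{6} F_\n^3=0$, i.e.\ condition~$(2)$. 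Likewise Lemma~\ref{lem:lambdas} converts $\pi^4_7(F_\n^2)=0$ into $\varphi\wedge\ast(F_\n^2)=0$, the second equation of $(3)$. Hence the ${\rm Spin}(7)$ conditions are equivalent to $(3)$, and combined with $(2)\Leftrightarrow(3)$ this closes the chain $(1)\Leftrightarrow(3)\Leftrightarrow(2)$.

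The delicate point, which I expect to be the main obstacle, is the legitimacy of invoking Theorem~\ref{thm:FM Spin7 conn}: in Section~\ref{sec:dDT Spin7} the Cayley graph lies over the $x$-base with fibre $T^4$ in the $y$-directions, whereas here $\iota(S)$ lies over the $y$-base with fibre in the $x$-directions, so base and fibre are interchanged. I would justify the transfer by observing that the block exchange $x^i\leftrightarrow y^{i+4}$ preserves $\Phi$ in the form \eqref{eq:FM Cayley}---it swaps $dx^{0123}$ with $dy^{4567}$ and fixes $\sum_i\tau_i\wedge\om_i$---and hence lies in ${\rm Spin}(7)$; since the projections $\pi^k_7$, the Cayley condition, and anti-self-duality are all ${\rm Spin}(7)$-invariant, Theorem~\ref{thm:FM Spin7 conn} applies to $\iota(S)$ through this symmetry. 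This is the same tacit symmetry already used in Proposition~\ref{prop:FM G2 2}, and the subtlety that must be checked carefully is that the orientation making $\iota(S)$ Cayley is the one with respect to which the anti-self-duality of $-\i F_\n^B$ is asserted.
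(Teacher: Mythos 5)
Your proposal is correct and follows essentially the same route as the paper, which derives Proposition~\ref{prop:FM G2 conn 2} from Theorem~\ref{thm:FM Spin7 conn} via the embedding $\iota\colon B\times T^3\hookrightarrow B\times T^4$ exactly as Proposition~\ref{prop:FM G2 2} was derived from Theorem~\ref{thm:FM Spin7}, with the same reduction of the $\Sp$-equations to $G_2$-terms via \eqref{eq:Spin7 proj 3}, \eqref{eq:Hodge 78}, Lemma~\ref{lem:lambdas} and \cite[Lemma 3.2, Remark 3.3]{KY}. Your explicit verification of the base--fibre swap symmetry preserving $\Phi$ and of the orientation compatibility (since $dx^0$ vanishes on $\iota(S)$, the Cayley and coassociative orientations coincide) makes precise points the paper leaves tacit, and the paper only additionally remarks that one could instead argue directly with Lemma~\ref{lem:coasso ASD} and results of \cite{KY}.
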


Since the lemma corresponding to Lemma \ref{lem:Cayley ASD} would be interesting in itself, 
we write it down here.

\begin{lemma} \label{lem:coasso ASD}
Let $U \subset \R^7$ be a coassociative subspace, a subspace of $\R^7$ which is a coassociative submanifold. 
Denote by $U^\perp$ the orthogonal complement of $U$. 
We identify $\Lambda^k U^*$ with the subspace of $\Lambda^k (\R^7)^*$ by 
\[
\Lambda^k U^* = \{\, \alpha \in \Lambda^k (\R^7)^* \mid i(v) \alpha =0 
\mbox{ for any } v \in U^\perp \,\}. 
\]
Then 
$\alpha \in \Lambda^2 U^*$ is anti-self-dual with respect to the induced metric 
if and only if $\alpha \wedge * \varphi =0$. 
\end{lemma}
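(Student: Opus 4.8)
The plan is to reproduce the argument of Lemma~\ref{lem:Cayley ASD} in the $G_2$ setting. Both the condition $\alpha\wedge *\varphi=0$ and the anti-self-duality of $\alpha$ are invariant under $G_2$, which fixes $\varphi$, $*\varphi$, the metric and the orientation; since $G_2$ acts transitively on oriented coassociative $4$-planes, I may reduce to a single normal form. Concretely, because $U$ is coassociative I would choose an oriented orthonormal basis with dual basis $\{\,dx^1,dx^2,dx^3,dy^4,\dots,dy^7\,\}$ for which $U=\rmspan\{\,\p/\p y^4,\dots,\p/\p y^7\,\}$, $U^\perp=\rmspan\{\,\p/\p x^1,\p/\p x^2,\p/\p x^3\,\}$, and $\varphi$, $*\varphi$ take the forms \eqref{eq:FM asso} and \eqref{eq:FM coasso}. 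With this choice $(*\varphi)|_U=dy^{4567}$, so the orientation making $U$ coassociative is $dy^{4567}$.

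Next I would write $\alpha\in\Lambda^2 U^*$ as a linear combination of the $dy^{ab}$ and compute $\alpha\wedge *\varphi$ from \eqref{eq:FM coasso}. The term $\alpha\wedge dy^{4567}$ vanishes for degree reasons, being a $6$-form in the four variables $dy^a$, so only $\sum_{k\in\Z/3}\alpha\wedge dx^{k,k+1}\wedge\om_{k+2}$ survives. Each $\om_i$ is self-dual on $U$ with respect to $dy^{4567}$, so $\alpha\wedge\om_{k+2}=\la\alpha,\om_{k+2}\ra\,dy^{4567}$, and hence
\[
\alpha\wedge *\varphi=\sum_{k\in\Z/3}\la\alpha,\om_{k+2}\ra\,dx^{k,k+1}\wedge dy^{4567}.
\]

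Finally, the three $6$-forms $dx^{k,k+1}\wedge dy^{4567}$ ($k\in\Z/3$) are linearly independent, so $\alpha\wedge *\varphi=0$ if and only if $\la\alpha,\om_i\ra=0$ for $i=1,2,3$. Since $\{\,\om_1,\om_2,\om_3\,\}$ is a basis of the space of self-dual $2$-forms on $(U,dy^{4567})$, this says exactly that $\alpha$ is anti-self-dual, which completes the argument. The point needing the most care — and the main obstacle — is the bookkeeping of orientations: I must confirm that the orientation $dy^{4567}$ with respect to which the $\om_i$ are self-dual coincides with the coassociative orientation of $U$, and that the reduction to normal form is legitimate (i.e. $G_2$-transitivity on oriented coassociative planes). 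An alternative that sidesteps the direct computation is to embed $\R^7\hookrightarrow W=\R e^0\oplus\R^7$ as in Proposition~\ref{prop:FM G2 2}: a coassociative $U$ becomes a Cayley subspace of $W$, and using the $G_2$-decomposition $\Lambda^2_{14}V^*=\{\,\alpha\mid *\varphi\wedge\alpha=0\,\}$ one checks that $*\varphi\wedge\alpha=0$ is equivalent to $\pi^2_7(\alpha)=0$ via \eqref{eq:Spin7 proj 2} and \eqref{eq:Hodge 78}, so that Lemma~\ref{lem:Cayley ASD} applies directly.
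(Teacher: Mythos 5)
Your proof is correct and follows essentially the same route as the paper's: choose an adapted oriented orthonormal basis in which $U=\rmspan\{e_4,\dots,e_7\}$ and $*\varphi=e^{4567}+\sum_{k\in\Z/3}e^{k,k+1}\wedge\om_{k+2}$, compute $\alpha\wedge*\varphi=\sum_{k\in\Z/3}\la\alpha,\om_{k+2}\ra\,e^{k,k+1}\wedge e^{4567}$ using self-duality of the $\om_i$, and conclude since $\{\om_1,\om_2,\om_3\}$ is a basis of the self-dual $2$-forms for the coassociative orientation. Your explicit appeal to $G_2$-transitivity on coassociative planes is just the paper's unstated justification for the existence of such a basis, and your orientation check matches the paper's remark that $\{e_4,\dots,e_7\}$ is positively oriented.
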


\begin{proof}
Since $U$ is coassociative, there is an orthonormal basis $\{\, e_i \,\}_{i=1}^7$ with its dual $\{\, e^i \,\}_{i=1}^7$ 
such that 
$U$ is spanned by $e_4, \cdots, e_7$, 
which is positively oriented, 
$U^\perp$ is spanned by $e_1, \cdots, e_3$  
and \eqref{varphi} holds. 
Setting $\om_1=e^{45} + e^{67}, \om_2=e^{46} - e^{57}$ and $\om_3=-(e^{47} + e^{56})$, 
we have $* \varphi = e^{4567} + \sum_{k \in \Z/3} e^{k, k+1} \wedge \om_{k+2}$. 
Then, it follows that 
\[
\alpha \wedge * \varphi 
= \sum_{k \in \Z/3} e^{k, k+1} \wedge \alpha \wedge \om_{k+2} 
= \sum_{k \in \Z/3} e^{k, k+1} \wedge \la \alpha, \om_{k+2} \ra e^{4567}. 
\]
Since $\{ e_4, \cdots, e_7 \}$ is positively oriented, 
$\{\, \om_1, \om_2, \om_3 \,\}$ is a basis of the space of self-dual 2-forms on $U$. 
Hence, the proof is completed. 
\end{proof}

By this lemma and results in \cite{KY}, we can also prove Proposition \ref{prop:FM G2 conn 2} 
without using Theorem \ref{thm:FM Spin7 conn}.

\section{Compatibilities with other connections} \label{sec:suggestSpin7dDT}

In this section, we post some evidences showing that Definition \ref{def:Spin7dDTMain} we suggest 
is compatible with deformed Donaldson--Thomas (dDT) connections for a $G_2$-manifold  
and deformed Hermitian Yang--Mills (dHYM) connections of a Calabi-Yau 4-manifold.

Use the notation (and identities) of Subsection \ref{sec:Spin7 geometry}. 
Let 
$X^8$ be a compact connected 8-manifold with a ${\rm Spin}(7)$-structure $\Phi$ 
and $L \to X$ be a smooth complex line bundle with a Hermitian metric $h$.
Set 
\[
\begin{aligned}
\mathcal{A}_{0}=&\{\,\nabla \mid \mbox{a Hermitian connection of }(L,h) \,\} 
= \nabla + \i \Om^1 \cdot \id_L
\end{aligned}
 \]
for a fixed connection $\nabla \in \Aa_{0}$. 
We regard the curvature 2-form $F_\n$ of $\n$ as a $\i \R$-valued closed 2-form on $X$. 

Define maps 
$\Ff^1_{{\rm Spin}(7)}:\Aa_{0} \rightarrow \i \Om^{2}_7$ and 
$\Ff^2_{{\rm Spin}(7)}:\Aa_{0} \rightarrow \Om^{4}_7$ by 
\begin{equation*}
\begin{aligned}
\Ff^{1}_{{\rm Spin}(7)}(\nabla)
&= 
\pi^2_{7} \left( F_\nabla + \frac{1}{6} * F_\nabla^3 \right) 
= 
\frac{1}{4}
\left(
F_\nabla + \frac{1}{6} * F_\nabla^3 + 
* \left( \left( F_\nabla + \frac{1}{6} * F_\nabla^3  \right) \wedge \Phi \right) 
\right), \\
\Ff^{2}_{{\rm Spin}(7)}(\nabla)
&=
\pi^{4}_{7}(F_{\nabla}^2). 
\end{aligned}
\end{equation*}
Then, 
a Hermitian connection $\nabla$ of $(L,h)$ satisfying 
$$
\Ff^{1}_{{\rm Spin}(7)}(\nabla)=0 \quad \mbox{and} \quad \Ff^{2}_{{\rm Spin}(7)}(\nabla)=0 
$$
is a $\Sp$-dDT connection defined in Definition \ref{def:Spin7dDTMain}.

\begin{lemma} \label{lem:Spin7 G2}
Let $(Y^7,\varphi,g)$ be a $G_2$-manifold  
with the Hodge dual $*_7 \varphi \in \Om^4$. 
Then, $X^8=S^1 \times Y^7$ is a $\Sp$-manifold. 
Let $L \rightarrow Y$ be a smooth complex line bundle with a Hermitian metric $h$. 
Identify a connection $\nabla$ on $Y^7$ 
with that on $X^8$ by the pullback. 
Then, the following are equivalent. 
\begin{enumerate}
\item
$\n$ is a dDT connection in the sense of $G_2$, that is, 
$\Ff_{G_2}(\n)= *_7 \varphi \wedge F_\nabla + F_\nabla^3/6 = 0$. 
\item
$\Ff^{1}_{{\rm Spin}(7)}(\nabla)=0$. 
\item
$\Ff^{1}_{{\rm Spin}(7)}(\nabla)=\Ff^{2}_{{\rm Spin}(7)}(\nabla)=0$. 
\end{enumerate}
\end{lemma}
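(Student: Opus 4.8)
The plan is to express both $\Sp$-quantities $\Ff^1_{\Sp}(\n)$ and $\Ff^2_{\Sp}(\n)$ directly in terms of the single $G_2$-quantity $\Ff_{G_2}(\n)$, exploiting that the $\Sp$-structure on $X^8 = S^1 \times Y^7$ is $\Phi = e^0 \wedge \varphi + *_7 \varphi$, where $e^0$ is the pullback of the unit $1$-form on the $S^1$-factor and $V^* = (\R e^0)^\perp$ is identified with $T^*Y$. The structural fact that makes everything collapse is that $\n$ is pulled back from $Y^7$, so $i(\p_0) F_\n = 0$ and hence $F_\n \in \Lambda^2 V^*$. I would prove $(3) \Rightarrow (2)$ (immediate), then the equivalence $(1) \Leftrightarrow (2)$, then $(1) \Rightarrow (3)$, which together give all three equivalences.

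First I would establish $(1) \Leftrightarrow (2)$. Since $F_\n^3 \in \Lambda^6 V^*$, the Hodge relation \eqref{eq:Hodge 78} gives $*_8 F_\n^3 = e^0 \wedge *_7 F_\n^3$, so that $F_\n + \frac{1}{6} *_8 F_\n^3$ decomposes with $V^*$-component $F_\n$ and $e^0$-component $\frac{1}{6} *_7 F_\n^3$. Applying the projection formula \eqref{eq:*F proj} to each piece separately (the $V^*$-part contributes $\lambda^2(*_7(F_\n \wedge *_7\varphi))$, and the $e^0$-part contributes $\lambda^2(\frac{1}{6} *_7 F_\n^3)$) and using linearity of $*_7$ together with $F_\n \wedge *_7\varphi = *_7\varphi \wedge F_\n$, I expect to obtain
\[
2\,\Ff^1_{\Sp}(\n) = \lambda^2\!\left( *_7\!\left( *_7\varphi \wedge F_\n + \tfrac{1}{6} F_\n^3 \right) \right) = \lambda^2\!\left(*_7 \Ff_{G_2}(\n)\right).
\]
Because $\lambda^2$ is an isometry and $*_7$ an isomorphism, $\Ff^1_{\Sp}(\n) = 0$ if and only if $\Ff_{G_2}(\n) = 0$, which is precisely $(1) \Leftrightarrow (2)$.

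Next I would close the loop via $(1) \Rightarrow (3)$. Applying the second identity of Lemma \ref{lem:2747} with $F_1 = 0$ and $F_2 = F_\n$ reduces it to $\sqrt{8}\,\pi^4_7(F_\n^2) = -\lambda^4(*_7(\varphi \wedge *_7 F_\n^2))$, so $\Ff^2_{\Sp}(\n) = 0$ is equivalent to $\varphi \wedge *_7 F_\n^2 = 0$. By \cite[Remark 3.3]{KY} the $G_2$-equation $\Ff_{G_2}(\n) = 0$ already forces $\varphi \wedge *_7 F_\n^2 = 0$ (this is exactly the $(2)\Leftrightarrow(3)$ implication underlying Proposition \ref{prop:FM G2}). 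Combined with $(1)\Rightarrow(2)$, assuming $(1)$ yields both $\Sp$-equations, i.e.\ $(3)$.

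The individual computations are routine given the formulas already proved, so the only genuine labor is bookkeeping: for each $8$-dimensional form one must correctly split off its $e^0$-component before invoking \eqref{eq:*F proj} and Lemma \ref{lem:2747}, and one must not conflate $*_8$ with $*_7$. The one conceptual point deserving care is the apparent asymmetry between $(2)$ and $(3)$ — two $\Sp$-equations versus a single $G_2$-equation. This is resolved entirely by the $G_2$-level implication $\Ff_{G_2}(\n) = 0 \Rightarrow \varphi \wedge *_7 F_\n^2 = 0$ from \cite{KY}; without that input the second $\Sp$-equation would not be forced, and this is where I expect the real (if imported) content to lie.
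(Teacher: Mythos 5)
Your proposal is correct, and all three implications go through; the interesting difference from the paper lies in the mechanism for $(1)\Leftrightarrow(2)$. The paper computes $4 * \Ff^{1}_{{\rm Spin}(7)}(\nabla)$ directly from the explicit projection formula \eqref{eq:Spin7 proj 2} and the Hodge relations \eqref{eq:Hodge 78}, obtaining $dx \wedge \bigl( *_7 F_\nabla + \varphi \wedge F_\nabla + \frac{1}{6} (*_7 F_\nabla^3) \wedge *_7 \varphi \bigr) + *_7 \varphi \wedge F_\nabla + \frac{1}{6} F_\nabla^3$; the vanishing of $\Ff^{1}_{{\rm Spin}(7)}(\nabla)$ is then the simultaneous vanishing of the two ``mirror'' equations, and the paper must invoke \cite[Lemma 3.2]{KY} (their mutual equivalence) to reduce this to the single condition $\Ff_{G_2}(\n)=0$. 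You instead specialize the $\lambda^2$-parametrization identities \eqref{eq:*F proj}, \eqref{eq:F3 proj} (i.e.\ Lemma \ref{lem:2747}) to $F_1=0$, $F_2=F_\n$, which yields the single identity $2\,\Ff^{1}_{{\rm Spin}(7)}(\nabla) = \lambda^2\bigl(*_7 \Ff_{G_2}(\n)\bigr)$; since $\lambda^2$ is an isometry by Lemma \ref{lem:lambdas}, no auxiliary equivalence of mirror equations is needed at this step. Your route buys the cleaner statement that the ${\rm Spin}(7)$-operator is literally an isometric image of the $G_2$-operator, at the modest cost of having to justify that \eqref{eq:*F proj} and Lemma \ref{lem:2747}, which the paper proves inside the Fourier--Mukai computation, are pointwise algebraic identities valid for an arbitrary decomposition $F = e^0 \wedge F_1 + F_2$ with $F_1 \in \Lambda^1 V^*$, $F_2 \in \Lambda^2 V^*$ --- they are, since their proofs use only \eqref{eq:Spin7 proj 1}, \eqref{def:lambda} and \eqref{eq:Hodge 78} and never the coordinate form of $F_1, F_2$, but you should state this explicitly, and also keep the sign bookkeeping straight (Lemma \ref{lem:2747} is phrased for $\pi^2_7(F - \frac{1}{6} *_8 F^3)$ with the real-valued $F = -\i F^S_\n$, whereas $\Ff^{1}_{{\rm Spin}(7)}$ involves $F_\n + \frac{1}{6}*F_\n^3$ for $\i\R$-valued $F_\n$). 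For $(2)\Leftrightarrow(3)$, the paper's primary citation is \cite[Proposition 3.3]{KYSpin7} using $F_\n^4=0$, but it explicitly notes that \cite[Remark 3.3]{KY} gives an alternative proof; your argument --- reducing $\Ff^{2}_{{\rm Spin}(7)}(\nabla)=0$ to $\varphi \wedge *_7 F_\n^2 = 0$ via the second identity of Lemma \ref{lem:2747} and then importing \cite[Remark 3.3]{KY} --- is precisely that alternative, made explicit, and your closing remark correctly locates the genuinely imported content there.
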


\begin{proof}
Recall that the induced ${\rm Spin}(7)$-structure on $X^8$ is given by 
\[
\Phi= dx \wedge \varphi + *_7 \varphi,  
\]
where $x$ is a coordinate of $S^1$ and $*_7$ is the Hodge star on $Y^7$. 
By \eqref{eq:Hodge 78}, we have  
\[
* F_\nabla = dx \wedge *_7 F_\nabla, \quad * F_\nabla^3 = dx \wedge (*_7 F_\nabla^3), 
\]
where $*=*_8$  is the Hodge star on $X^8$. 
Then, we have 
\[
\begin{aligned}
4 * \Ff^{1}_{{\rm Spin}(7)}(\nabla) 
=&
dx \wedge *_7 F_\nabla + \frac{1}{6} F_\nabla^3 
+ F_\nabla \wedge (dx \wedge \varphi + *_7 \varphi) 
+ \frac{1}{6} dx \wedge (*_7 F_\nabla^3) \wedge *_7 \varphi \\
=&
dx \wedge \left( *_7 F_\nabla + \varphi \wedge F_\nabla 
+ \frac{1}{6} (*_7 F_\nabla^3) \wedge *_7 \varphi \right) 
+*_7 \varphi \wedge F_\nabla + \frac{1}{6} F_\nabla^3. 
\end{aligned}
\]
Thus, we see that (1) and (2) are equivalent by \cite[Lemma 3.2]{KY}. 

The equivalence of (2) and (3) follows from \cite[Proposition 3.3]{KYSpin7} since $F_\n^4=0$. 
This equivalence can also be proved by \cite[Remark 3.3]{KY}. 
\end{proof}

\begin{lemma}\label{lem:Spin7 CY4}
Let  $(X^8,J,g,\om,\Om)$ be a Calabi--Yau 4-manifold and 
$L \rightarrow X$ be a complex line bundle with a Hermitian metric $h$. 
Equip $X$ with a $\Sp$-structure $\Phi$ given by 
\[
\Phi=\frac{1}{2} \om^2 + \mathop{\mathrm{Re}} \Om. 
\]
Suppose that $\nabla$ is a Hermitian connection such that 
the $(0,2)$-part $F^{0,2}_{\nabla}$ of $F_{\nabla}$ vanishes. 
Then, we have $\Ff^{2}_{{\rm Spin}(7)} (\nabla)=0$. 
Moreover, 
$\n$ is a dHYM connection with phase $1$ on $X^8$, 
that is, 
$\mathop{\mathrm{Im}}\left( \omega + F_\nabla \right)^4=0$, 
if and only if 
$\n$ is a $\Sp$-dDT connection, that is, 
$\Ff^{1}_{{\rm Spin}(7)} (\nabla)=0$. 
\end{lemma}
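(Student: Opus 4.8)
The plan is to exploit that the hypothesis $F^{0,2}_\nabla=0$, together with $\nabla$ being Hermitian (so $F^{2,0}_\nabla=\overline{F^{0,2}_\nabla}=0$), forces $F_\nabla$ to be of type $(1,1)$; writing $G:=-\i F_\nabla$, this says $G$ is a real $(1,1)$-form. Everything then reduces to two purely linear-algebraic facts, to be checked pointwise on $X$, about how the type decomposition coming from $SU(4)\subset\Sp$ interacts with the $\Sp$-decomposition $\Lambda^k=\bigoplus_\l\Lambda^k_\l$: namely \textbf{(A)} $\Lambda^2_7\cap\Lambda^{1,1}_\R=\R\om$, with primitive real $(1,1)$-forms lying in $\Lambda^2_{21}$; and \textbf{(B)} $\Lambda^4_7$ is orthogonal to the space $\Lambda^{2,2}_\R$ of real $(2,2)$-forms. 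I would establish (A) and (B) first and then read off both assertions.

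For (A) I would use the eigenvalue descriptions $\Lambda^2_7=\{\alpha:\Phi\wedge\alpha=3*\alpha\}$ and $\Lambda^2_{21}=\{\alpha:\Phi\wedge\alpha=-*\alpha\}$ recorded in Subsection~\ref{sec:Spin7 geometry}. Since $\Om$ has type $(4,0)$ one has $\mathrm{Re}\,\Om\wedge\om=0$ and $\mathrm{Re}\,\Om\wedge\beta=0$ for primitive $\beta\in\Lambda^{1,1}$; combining this with the Kähler identity $*\om=\tfrac16\om^3$ and the Weil formula $*\beta=-\tfrac12\om^2\wedge\beta$ for primitive real $(1,1)$-forms gives, in one line, $\Phi\wedge\om=\tfrac12\om^3=3*\om$ and $\Phi\wedge\beta=\tfrac12\om^2\wedge\beta=-*\beta$. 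Hence $\om\in\Lambda^2_7$ and the primitive part of $\Lambda^{1,1}_\R$ lies in $\Lambda^2_{21}$, which is exactly (A).

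For (B) I would argue by representation theory. Restricting to $SU(4)=\mathrm{Spin}(6)$ one has $\Lambda^4_7\cong W_7\cong\R^6\oplus\R$ (vector $\oplus$ trivial), whereas $\Lambda^{2,2}_\R\cong\mathbf1\oplus\mathbf{15}\oplus\mathbf{20'}$ (from $\mathbf6\otimes\mathbf6=\mathrm{Sym}^2\mathbf6\oplus\Lambda^2\mathbf6$) contains \emph{no} copy of the vector representation $\mathbf6$. By Schur's lemma the $SU(4)$-equivariant orthogonal projection $\Lambda^{2,2}_\R\to\Lambda^4_7$ annihilates the $\mathbf{15}$- and $\mathbf{20'}$-summands and cannot hit the $\R^6$-part of $\Lambda^4_7$, so (B) reduces to showing that the $SU(4)$-invariant line in $\Lambda^4_7$ is orthogonal to $\om^2$. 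This single point is the main obstacle and the only genuinely computational step: I would settle it via the involution $J\mapsto-J$, which fixes $\Phi=\tfrac12\om^2+\mathrm{Re}\,\Om$ (hence preserves each $\Lambda^4_\l$), acts as $+1$ on $\om^2,\mathrm{Re}\,\Om$ and as $-1$ on $\mathrm{Im}\,\Om$. This forces the invariant lines of $\Lambda^4_7$ and $\Lambda^4_{27}$ to be $\R\,\mathrm{Im}\,\Om$ and the combination of $\om^2,\mathrm{Re}\,\Om$ orthogonal to $\Phi$, in some order; a direct evaluation in a unitary coframe fixes the order, giving $\mathrm{Im}\,\Om\in\Lambda^4_7$. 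Since $\mathrm{Im}\,\Om$ has type $(4,0)+(0,4)$ it is orthogonal to $\om^2$, completing (B).

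The two assertions then follow quickly. As $F_\nabla$ is $(1,1)$, $F_\nabla^2=-G^2$ is a real $(2,2)$-form, so (B) yields $\Ff^{2}_{\Sp}(\nabla)=\pi^4_7(F_\nabla^2)=0$. For the remaining claim, $*F_\nabla^3$ is again $(1,1)$ (the Hodge star sends $(3,3)$ to $(1,1)$), so $F_\nabla+\tfrac16*F_\nabla^3$ is $(1,1)$; by (A) its image under $\pi^2_7$ lies in $\R\om$, whence $\Ff^{1}_{\Sp}(\nabla)=0$ if and only if $\langle F_\nabla+\tfrac16*F_\nabla^3,\om\rangle=0$. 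Using $*\om=\tfrac16\om^3$ and the self-adjointness of $*$ in these degrees, I would compute
\[
\Big\langle F_\nabla+\tfrac16 * F_\nabla^3,\ \om\Big\rangle\,\vol
=\frac{\i}{6}\big(G\wedge\om^3-G^3\wedge\om\big)
=\frac{\i}{24}\,\mathrm{Im}\,(\om+F_\nabla)^4 ,
\]
the last equality being the binomial expansion of $(\om+\i G)^4$. Hence $\Ff^{1}_{\Sp}(\nabla)=0$ if and only if $\mathrm{Im}\,(\om+F_\nabla)^4=0$, i.e. $\nabla$ is dHYM with phase $1$, as claimed.
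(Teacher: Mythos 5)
Your argument is correct, and its endgame is the paper's verbatim: both reduce the second claim to showing that $\pi^2_7$ of a real $(1,1)$-form sees only its $\R\om$-component, so that $\Ff^{1}_{{\rm Spin}(7)}(\nabla)=0$ iff $\la F_\nabla+\frac16*F_\nabla^3,\om\ra=0$, and both convert this pairing, via $*\om=\om^3/6$, into $\frac{\i}{24}\Im\left(\om+F_\nabla\right)^4$. The genuine difference is upstream: the paper obtains your facts (A) and (B) by citing Mu\~noz's Proposition 2 --- namely $\Lambda^2_7=\R\om\oplus A_+$ with $A_+\subset\Lambda^{2,0}\oplus\Lambda^{0,2}$, and $\Lambda^4_7$ contained in the $(3,1)+(1,3)+(4,0)+(0,4)$-part --- whereas you reprove them from scratch: (A) from the eigenvalue descriptions of $\Lambda^2_7,\Lambda^2_{21}$ together with the Weil identity $*\beta=-\frac12\om^2\wedge\beta$ for primitive real $(1,1)$-forms (correct, including the vanishing $\mathrm{Re}\,\Om\wedge\alpha=0$ by type in complex dimension $4$), and (B) from $\mathbf{6}\otimes\mathbf{6}=\mathbf{1}\oplus\mathbf{15}\oplus\mathbf{20'}$ under $SU(4)\cong{\rm Spin}(6)$ plus the conjugation involution. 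This buys self-containedness at the cost of the one computation you defer: distinguishing whether the $SU(4)$-invariant line of $\Lambda^4_7$ is $\R\,\Im\Om$ or the $\Phi$-orthogonal line in ${\rm span}\{\om^2,\mathrm{Re}\,\Om\}$ genuinely requires the unitary-coframe evaluation you postpone (it is true and routine using \eqref{eq:Spin7 proj 1}); with Mu\~noz's containment in hand it would instead follow at once from $\Lambda^4_7\perp\R\Phi$ and $\la\mathrm{Re}\,\Om,\Phi\ra=|\mathrm{Re}\,\Om|^2\neq0$. One cosmetic slip: since $F_\nabla$ is $\i\R$-valued, $F^{2,0}_\nabla=-\overline{F^{0,2}_\nabla}$ rather than $+\overline{F^{0,2}_\nabla}$; the conclusion that $F_\nabla$ is of type $(1,1)$ is unaffected.
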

\begin{proof}
By \cite[Proposition 2]{Munoz}, $\Lambda^4_7 T^* X$ is contained in 
the space of (3,1), (1,3), (4,0) and (0,4)-forms. 
Since $F_\n^{0,2} = 0$, $F_\n^2$ is a real (2,2)-form, 
which implies that $\Ff^{2}_{{\rm Spin}(7)} (\nabla) = \pi^4_7 (F_\n^2)=0$.

Next, we show the second statement. 
By \cite[Proposition 2]{Munoz}, we have 
$\Lambda^2_7 T^* X = \R \om \oplus A_{+}$, 
where $A_{+}$ is a subspace of $\Lambda^{2,0} T^* X \oplus \Lambda^{0,2} T^* X$. 
Then, we have 
$$
\la A_{+}, \Ff^{1}_{{\rm Spin}(7)}(\nabla) \ra
=\la A_{+}, F_\nabla + * F_\nabla^3/6 \ra =0
$$ 
since $F_\n$ is a (1,1)-form. 
Thus, 
$\Ff^{1}_{{\rm Spin}(7)}(\nabla)=0$ if and only if 
$\la \om, F_\nabla + * F_\nabla^3/6 \ra=0$. 
Since $* \om = \om^3/6$, we have 
\begin{align*}
\left \la \om, F_\nabla + \frac{1}{6} * F_\nabla^3 \right \ra \vol
&= * \om \wedge F_\n + \frac{1}{6} \om \wedge F_\n^3 \\
&=
\frac{1}{6} \left( \om^3 \wedge F_\nabla + \om \wedge F_\nabla^3 \right) 
=
\frac{\i}{24} \Im (\om + F_\nabla)^4. 
\end{align*}
Hence, the proof is completed. 
\end{proof}

\begin{remark}
Note that dHYM connections do not depend on the holomorphic volume form $\Om$.  
Then, since 
$\left(X^8,J,g,\om, e^{-\i \theta} \Om \right)$ is again a Calabi--Yau manifold for $\theta \in \R$, 
Lemma \ref{lem:Spin7 CY4} implies that for a Hermitian connection $\n$ with $F_\n^{0,2}=0$, 
$\n$ is a dHYM connection with phase 1 if and only if 
$\n$ is a $\Sp$-dDT connection with respect to $\Phi_\theta = \om^2/2 + \mathop{\mathrm{Re}} (e^{- \i \theta} \Om)$. 
\end{remark}

\appendix
\section{Notation} \label{app:notation}
We summarize the notation used in this paper. 
We use the following for a manifold $X$ with a $G_2$- or $\Sp$-structure. 
Denote by $g$ the associated Riemannian metric.

\vspace{0.5cm}
\begin{center}
\begin{tabular}{|lc|l|}
\hline
Notation                        & & Meaning \\ \hline \hline
 $i(\,\cdot\,)$                &  & The interior product \\
$\Gamma(X, E)$            & &  The space of all smooth sections of  a vector bundle $E \rightarrow X$\\
$\Om^k$                       & & $\Om^k = \Om^k (X) =  \Gamma(X, \Lambda^k T^*X)$ \\
$v^{\flat} \in T^* X$       & & $v^{\flat} = g(v, \,\cdot\,)$ for $v \in TX$ \\ 
$\alpha^{\sharp} \in TX$ & & $\alpha = g(\alpha^{\sharp}, \,\cdot\,)$ for $\alpha \in T^* X$ \\
$\vol$                           & & The volume form induced from $g$ \\
$\Lambda^k_\l T^*X$      & & \hspace{-1ex}\begin{tabular}{l}The subspace of $\Lambda^k T^*X$ corresponding to  
                                   an $\l$-dimensional \\ irreducible subrepresentation as in Subsection \ref{sec:Spin7 geometry} \end{tabular} \\
$\Om^k_\l$                    & &  $\Om^k_\l = \Gamma (X, \Lambda^k_\l T^*X)$ \\
$\pi^k_\l$                       & & The projection $\Lambda^k T^*X \rightarrow \Lambda^k_\l T^*X$ or 
                                           $\Om^k \rightarrow \Om^k_\l$ \\
\hline
\end{tabular}
\end{center}


\begin{thebibliography}{99}
\bibitem{Bryant}
R. Bryant. Some remarks on $G_2$-structures. 
{\it Proceedings of G\"okova Geometry-Topology Conference 2005}, 75--109, {\it G\"okova Geometry/Topology Conference (GGT), G\"okova}, 2006.

\bibitem{DK}
S. K. Donaldson, P. B. Kronheimer. The geometry of four-manifolds. 
Oxford Mathematical Monographs. Oxford Science Publications. 
{\it The Clarendon Press, Oxford University Press, New York}, 1990. {\rm x}+440 pp. 

\bibitem{HL}
R. Harvey and H. B. Lawson. Calibrated geometries, Acta Math. 148 (1982), 47--157.

\bibitem{Kar}
S. Karigiannis. Deformations of $G_2$ and ${\rm Spin}(7)$ structures. {\it Canad. J. Math.} {\bf 57} (2005), no. 5, 1012--1055.


\bibitem{KLS}
K. Kawai, H. V. L\^e and L. Schwachh\"ofer. 
The Fr\"olicher-Nijenhuis  bracket and the geometry of $G_2$- and ${\rm Spin}(7)$-manifolds. 
{\it Ann. Mat. Pura Appl. (4)} {\bf 197} (2018), no. 2, 411--432. 

\bibitem{KY}
K. Kawai and H. Yamamoto. 
Deformation theory of deformed Hermitian Yang-Mills connections 
and deformed Donaldson-Thomas connections. 
arXiv:2004.00532. 

\bibitem{KYSpin7}
K. Kawai and H. Yamamoto. 
Deformation theory of deformed Donaldson--Thomas connections for $\Sp$-manifolds. 
to appear in J. Geom. Anal., arXiv:2101.03986. 

\bibitem{KYvol}
K. Kawai and H. Yamamoto. 
Mirror of volume functionals on manifolds with special holonomy. 
arXiv:2103.13863. 

\bibitem{LL}
J.-H. Lee and N. C. Leung. 
Geometric structures on $G_2$ and ${\rm Spin}(7)$-manifolds. 
{\it Adv. Theor. Math. Phys.} {\bf 13} (2009), no. 1, 1--31. 

\bibitem{LYZ}
N.-C. Leung, S.-T. Yau and E. Zaslow. 
From special Lagrangian to Hermitian--Yang--Mills via Fourier--Mukai transform. 
{\it Adv. Theor. Math. Phys}. {\bf 4} (2000), no. 6, 1319--1341. 

\bibitem{McLean}
R.C. McLean. 
Deformations of calibrated submanifolds. 
{\it Comm. Anal. Geom}. {\bf 6} (1998), no. 4, 705--747. 

\bibitem{Munoz}
V. Mu\~noz. $\rm Spin(7)$-instantons, stable bundles and the Bogomolov inequality for complex 4-tori. 
{\it J. Math. Pures Appl. (9)} {\bf 102} (2014), no. 1, 124--152. 
\end{thebibliography}
\end{document}